\definecolor{Fcolor}{rgb}{0,0.5,0.25}
\newcommand{\ignore}[1]{}
\newcommand{\tr}{\mathrm{tr}}
\newcommand{\T}{^{\!\top}}
\newcommand{\Tp}{^{\!\phantom{\top}}}
\newcommand{\la}{\langle}
\newcommand{\ra}{\rangle}
\newcommand{\asf}{\mathsf{a}}
\newcommand{\abf}{\boldsymbol{a}}
\newcommand{\Asf}{\boldsymbol{\mathsf{A}}}
\newcommand{\Abf}{\boldsymbol{A}}
\newcommand{\Bsf}{\boldsymbol{\mathsf{B}}}
\newcommand{\Bbf}{\boldsymbol{B}}
\newcommand{\Bcal}{\mathcal{B}}
\newcommand{\Bst}{\accentset{\ast}{\Bcal}}
\newcommand{\Ccal}{\mathcal{C}}
\newcommand{\Dbb}{\mathbb{D}}
\newcommand{\ebf}{\boldsymbol{e}}
\newcommand{\Ecal}{\mathcal{E}}
\newcommand{\Fcal}{\mathcal{F}}
\newcommand{\Frm}{\mathrm{F}}
\newcommand{\Fch}{\check{\mathcal{F}}}
\newcommand{\Fsf}{\boldsymbol{\mathsf{F}}}
\newcommand{\Ibf}{\boldsymbol{I}}
\newcommand{\Ical}{\mathcal{I}}
\newcommand{\Jbf}{\boldsymbol{J}}
\newcommand{\Kcal}{\mathcal{K}}
\newcommand{\msf}{\mathsf{m}}
\newcommand{\nsf}{\mathsf{n}}
\newcommand{\Ncal}{\mathcal{N}}
\newcommand{\Psf}{\boldsymbol{\mathsf{P}}}
\newcommand{\qbf}{\boldsymbol{q}}
\newcommand{\Qcal}{\mathcal{Q}}
\newcommand{\qch}{\check{q}}
\newcommand{\Rbb}{\mathbb{R}}
\newcommand{\Sbb}{\mathbb{S}}
\newcommand{\Scal}{\mathcal{S}}
\newcommand{\tsf}{\mathsf{t}}
\newcommand{\Tcal}{\mathcal{T}}
\newcommand{\usf}{\boldsymbol{\mathsf{u}}}
\newcommand{\Ucal}{\mathcal{U}}
\newcommand{\wsf}{\boldsymbol{\mathsf{w}}}
\newcommand{\xsf}{\boldsymbol{\mathsf{x}}}
\newcommand{\xbf}{\boldsymbol{x}}
\newcommand{\Xbf}{\boldsymbol{X}}
\newcommand{\xst}{\accentset{\ast}{\xbf}}
\newcommand{\Xst}{\accentset{\ast}{\Xbf}}
\newcommand{\Xsf}{\boldsymbol{\mathsf{X}}}
\newcommand{\Ybf}{\boldsymbol{Y}}
\newcommand{\Ych}{\check{\Ybf}}
\newcommand{\Yst}{\accentset{\ast}{\Ybf}}
\newcommand{\zsf}{\boldsymbol{\mathsf{z}}}
\newcommand{\xibf}{\boldsymbol{\xi}}
\newcommand{\nubf}{\boldsymbol{\nu}}
\newcommand{\taubf}{\boldsymbol{\tau}}
\newcommand{\taust}{\accentset{\ast}{\taubf}}
\newcommand{\Lambdabf}{\boldsymbol{\Lambda}}
\newcommand{\Lambdast}{\accentset{\ast}{\Lambdabf}}
\DeclareRobustCommand{\cev}[1]{%
  \mathpalette\do@cev{#1}%
}
\newcommand{\do@cev}[2]{%
  \fix@cev{#1}{+}%
  \reflectbox{$\m@th#1\vec{\reflectbox{$\fix@cev{#1}{-}\m@th#1#2\fix@cev{#1}{+}$}}$}%
  \fix@cev{#1}{-}%
}
\newcommand{\fix@cev}[2]{%
  \ifx#1\displaystyle
    \mkern#2 1mu
  \else
    \ifx#1\textstyle
      \mkern#2 3mu
    \else
      \ifx#1\scriptstyle
        \mkern#2 2mu
      \else
        \mkern#2 2mu
      \fi
    \fi
  \fi
}
\begin{document}

\title{Parabolic Relaxation for Quadratically-constrained Quadratic Programming -- Part II: Theoretical and Computational Results
	\thanks{This work is in part supported by the NSF Award 1809454. Alper Atamt\"{u}rk is supported, in part, by grant FA9550-10-1-0168 from the Office of the Assistant Secretary of Defense for Research \& Engineering and the NSF Award 1807260.}
}


\author{Ramtin Madani \and Mersedeh Ashraphijuo \and Mohsen Kheirandishfard \and Alper Atamt\"{u}rk}

\institute{Ramtin Madani \and Mersedeh Ashraphijuo \at
	Department of Electrical Engineering, University of Texas Arlington \\
	\email{ramtin.madani@uta.edu, mersedeh.ashraphijuo@uta.edu}           
	\and
	Mohsen Kheirandishfard \at
	Cognitiv \\
	\email{mohsen.kheirandishfard@gmail.com}
	\and
	Alper Atamt\"{u}rk \at
	Department of Industrial Engineering \& Operations Research, University of California Berkeley \\
	\email{atamturk@berkeley.edu}   
}

\date{Received: date / Accepted: date}

\maketitle


\begin{abstract}
	In the first part of this work \cite{parabolic_part1}, we introduce a convex {\it parabolic relaxation} for quadratically-constrained quadratic programs, along with a sequential penalized
	parabolic relaxation algorithm to recover near-optimal feasible solutions.
   In this second part, we show that starting from a feasible solution or a near-feasible solution satisfying certain regularity conditions, the sequential penalized
	parabolic relaxation algorithm convergences to a point which satisfies Karush–Kuhn–Tucker optimality conditions. Next, we present
	numerical experiments on benchmark non-convex QCQP problems as well as large-scale instances of system identification problem demonstrating the efficiency of the proposed approach.
	
	\keywords{Parabolic relaxation \and Convex relaxation \and Quadratically-constrained quadratic programming \and Non-convex optimization}
	\PACS{87.55.de}
	\subclass{65K05 \and 90-08 \and 90C26 \and 90C22}
\end{abstract}

\section{Introduction}

As discussed in the first part of this work, \cite{parabolic_part1}, we study the problem of minimizing a quadratic objective function over a feasible set that is also defined by quadratic functions, i.e., quadratically-constrained quadratic programming (QCQP). A wide variety of convex relaxations and search methods have been proposed in the literature to tackle the class of QCQP optimization problems. The most notable among them is the semidefinite programming (SDP) relaxation.

Scalability is the main challenge to the real-world applications of SDP. Recently, there have been a significant attention towards addressing this issue \cite{majumdar2019recent}. One approach for improvement is developing methods to exploit problem-specific structures, e.g., sparsity and symmetry. For certain structures, there are techniques that allow expressing a large semidefinite constraint as a set of smaller semidefinite constraints, with increased computational efficiency \cite{agler1988positive,grone1984positive}. Such results are leveraged in optimization and control theory problems to improve efficiency \cite{fujisawa2009user,fukuda2001exploiting,kim2011exploiting,mason2014chordal}. By exploiting the underlying sparsity properties of the problem, and using decomposition, \cite{zheng2016chordal,zheng2017scalable} propose scalable algorithms to obtain structured feedback gains to stabilize large-scale systems. Also, the sparsity structure is exploited for polynomial optimization problems in \cite{lasserre2017bounded,weisser2018sparse}. Another common structure in applications of SDP is symmetry. To this end, the method of symmetry reduction proposed in \cite{de2010exploiting,gatermann2004symmetry,permenter2017reduction,vallentin2009symmetry}. 
Similar to symmetry reduction, facial reduction leverages a degeneracy structure ad proposed in \cite{borwein1981regularizing,drusvyatskiy2017many,kungurtsev2020two,pataki2013strong,permenter2017reduction}.

Another approach is developing methods for producing low-rank solutions. In some cases where there is indeed a low-rank optimal solution to the problem \cite{AG:rank-one, Barvinok95,lemon2016low,Pataki98}, and in other cases acceptable low-rank feasible points to the problem are sufficient \cite{d2004direct,kulis2007fast,Recht07}. One direction includes the Burer-Monteiro algorithm \cite{burer2003nonlinear}, quasi-Newton algorithms for unconstrained optimization problems \cite{liu1989limited}, Riemannian optimization \cite{boumal2016non,journee2010low}, and coordinate descent \cite{erdogdu2021convergence,sun2015decomposition}. Another direction includes Frank-Wolfe based methods \cite{frank1956algorithm} such as Hazan's algorithm \cite{hazan2008sparse} and the numerical method in \cite{yurtsever2017sketchy}. 

First-order methods scale to significantly larger problem sizes, while trading off the accuracy of resulting output, e.g., Alternating Direction Method of Multipliers (ADMM) \cite{o2016conic}, and the augmented Lagrangian-based methods \cite{yang2015sdpnal,zhao2010newton}.

And finally, another approach that is most relevant to our work is a set of conservative algorithms that lead to guaranteed feasible points which can be suboptimal, as opposed to the ones which produce solutions that may violate some constraints. They are special-purpose algorithms leveraging domain knowledge associated with the target application, e.g., in neural networks \cite{fazlyab2020safety,qin2018verification,raghunathan2018semidefinite}, or general-purpose ones that apply broadly across application domains, e.g., DSOS and SDSOS algorithms \cite{ahmadi2017dsos} and their adaptive improvements where a sequence of linear programs (LPs) or second order cone programs (SOCPs) are solved, e.g., by iterative change of basis \cite{ahmadi2017sum} or column generation \cite{ahmadi2017optimization}.

In the first part of this work, we introduce {\it parabolic relaxation}, an alternative convex relaxation and discuss its basic properties. In this second part, 
our first result states that given a feasible initial point satisfying the linear independence constraint qualification (LICQ) condition, the penalized relaxation produces a unique solution which is feasible for the original QCQP and its objective value is not worse than that of the initial point. 
Our second result states that if the initial point is infeasible, but sufficiently close to the feasible set, and satisfies a generalized LICQ condition, then there exists a unique optimal solution to the penalized convex relaxation problem which is feasible for the original QCQP. Motivated by these results on constructing feasible points, we propose a sequential procedure for solving non-convex QCQPs with convergence guarantees to a point which satisfies Karush–Kuhn–Tucker (KKT) optimality conditions.
Finally, we demonstrate its performance on benchmark instances from QPLIB library \cite{FuriniEtAl2017TR} as well as on large-scale system identification problems.


\subsection{Notations}

For a given vector $\abf$ and a matrix $\Abf$, the symbols $a_i$ and $A_{ij}$, respectively, indicate the $i^{th}$ element of $\abf$ and the $(i,j)^{\mathrm{th}}$ element of $\Abf$.
The symbols $\Rbb$, $\Rbb^{n}$, and $\Rbb^{n\times m}$ denote the sets of real scalars, real vectors of size $n$, and real matrices of size $n\times m$, respectively. The set of $n\times n$ real symmetric matrices is shown by $\Sbb_n$. 
The notations $\Dbb_n$ and $\Sbb_n^{+}$, respectively, represent the sets of diagonally-dominant and positive semidefinite members of $\Sbb_n$. The $n\times n$ identity matrix is denoted by $\Ibf_n$. The origins of $\Rbb^n$ and $\Rbb^{n\times m}$ are denoted by $\boldsymbol{0}_n$ and $\boldsymbol{0}_{n\times m}$, respectively, while $\boldsymbol{1}_n$ denotes the all one $n$-dimensional vector. For a pair of $n\times n$ symmetric matrices $(\Abf,\Bbf)$, the notation $\Abf\succeq\Bbf$ means that $\Abf-\Bbf$ is positive semidefinite, whereas $\Abf\succ\Bbf$ means that $\Abf-\Bbf$ is positive definite. Given a matrix $\Abf\in\mathbb{R}^{m\times n}$, the notation $\sigma_{\min}(\Abf)$ represents the minimum singular value of $\Abf$. 
The symbols $\langle\cdot\,,\cdot\rangle$ and $\|\cdot\|_{\Frm}$ denote the Frobenius inner product and norm of matrices, respectively. Kronecker product of matrices is represented by $\otimes$. The notations $\|\cdot\|_1$ and $\|\cdot\|_2$ denote the $1$-  and $2$-norms of vectors/matrices, respectively. The superscript $(\cdot)^\top$ and the symbol $\mathrm{tr}\{\cdot\}$ represent the transpose and trace operators, respectively. $\mathrm{vec}\{\cdot\}$ denotes the vectorization operator. The notation $|\cdot|$ represents either the absolute value operator or cardinality of a set, depending on the context. For every $\xbf\in\Rbb^n$, the notation $[\xbf]$ represents an $n\times n$ diagonal matrix with the elements of $\xbf$. The notations $\nabla f(\abf)$ and $\nabla^2f(\abf)$, respectively, represent the gradient and Hessian of the function $f$ at a point $\abf$. The boundary of a subset $S$ of a topological space $X$ is defined as the set of points in the closure of $S$ not belonging to the interior of $S$. 

\section{Theoretical Results}\label{sec_thm}

This section formally states the results on the objective penalization and the convergence of the proposed sequential algorithm. We first need to define the notions of {\it feasibility distance}, {\it quasi-binding constraints}, {\it generalized LICQ}, {\it singularity function}, and {\it pencil norm}.

\subsection{Preliminaries} 
As discussed in \cite{parabolic_part1}, we consider the problem of finding a matrix $\Ybf\in\Rbb^{n\times m}$ that minimizes a quadratic objective function subject to a set of equality constraints $\Ecal$ and inequality constraints $\Ical$, i.e., 
\begin{subequations}
	\begin{align}
		&\underset{\Ybf\in\Rbb^{n\times m}}{\text{minimize}}&& 
		\!\!\!q\Tp_0(\Ybf)  \label{QCQP_tensor_a}\\
		&\text{subject to} && 
		\!\!\!q\Tp_k(\Ybf) = 0, &&&&\!\!\!\!\!\!k\in\Ecal, \label{QCQP_tensor_b}\\
		& && 
		\!\!\!q\Tp_k(\Ybf) \leq 0, &&&&\!\!\!\!\!\!k\in\Ical, \label{QCQP_tensor_c}
	\end{align}
\end{subequations}
where each function $q_k:\Rbb^{n\times m}\to\Rbb$ is defined as $q\Tp_k(\Ybf)\triangleq\tr\{\Ybf\T\!\Abf\Tp_k\Ybf\}+2\,\tr\{\Bbf_k\T \Ybf\}+c\Tp_{k}$, and $\{\Abf_k\in\Sbb_{n},\;\Bbf_k\in\Rbb^{n\times m},\;c_k\in\Rbb\}_{k\in\{0\}\cup\Ecal\cup\Ical}$ are given matrices/scalars. 
We propose the following penalized convex relaxation for problem \eqref{QCQP_tensor_a} -- \eqref{QCQP_tensor_c}:
\begin{subequations}
	\begin{align}
		&\underset{\begin{subarray}{l}\Ybf\in\Rbb^{n\times m}\\\hspace{-0.2mm}\Xbf\in\Sbb_n\end{subarray}}
		{\text{minimize}}&& 
		\!\!\!\bar{q}\Tp_0(\Ybf\!,\Xbf) +\eta\times \tr\{\Xbf-2\Ych\Ybf\T+\Ych\Ych\T\}\label{prob_relax_pen_1a}\\
		&\text{subject to} && 
		\!\!\!\bar{q}\Tp_k(\Ybf\!,\Xbf) = 0,&&&&\!\!\!\!\!\!k\in\Ecal,\label{prob_relax_pen_1b}\\
		& && 
		\!\!\!\bar{q}\Tp_k(\Ybf\!,\Xbf) \leq 0,&&&&\!\!\!\!\!\!k\in\Ical,\label{prob_relax_pen_1c}\\
		& && 
		\!\!\!X_{ii}+ X_{jj}-2 X_{ij}\geq \| (\ebf_i-\ebf_j)\T \Ybf\|^2_2,&&&&
		\!\!\!\!\!\! i,j\in\Ncal,\label{prob_relax_pen_1d}\\
		& && 
		\!\!\!X_{ii}+ X_{jj}+2 X_{ij}\geq \| (\ebf_i+\ebf_j)\T \Ybf\|^2_2,&&&&
		\!\!\!\!\!\! i,j\in\Ncal.\label{prob_relax_pen_1e}
	\end{align}
\end{subequations}
where $\bar{q}\Tp_k(\Ybf\!,\Xbf)\triangleq\tr\{\Abf\Tp_k\Xbf\}+2\,\tr\{\Bbf_k\T \Ybf\}+c\Tp_{k}$ for each $k\in\{0\}\cup\Ecal\cup\Ical$, $\Ych\in\Rbb^{n\times m}$ is an initial point, and $\eta>0$ is a constant. One can adopt a sequential framework to obtain feasible and near optimal points for  problem \eqref{QCQP_tensor_a} -- \eqref{QCQP_tensor_c}. This procedure is detailed in Algorithm \ref{alg:1}.

The following definition offers a distance measure from the feasible set of this problem.
\begin{definition}[Feasibility Distance]
	Denote $\Fcal\subseteq\Rbb^{n\times m}$ as the feasible set for  problem \eqref{QCQP_tensor_a} -- \eqref{QCQP_tensor_c}. The feasibility distance function $d_{\Fcal}:\Rbb^{n\times m}\to \Rbb$ is defined as:
	\begin{align}
		d_{\Fcal}(\Ych)\triangleq \min\{\|\Ybf-\Ych\|_{\Frm}\,|\,\Ybf\in\Fcal\}.\label{eq:feas}
	\end{align}
	\label{def:feas}\end{definition}

Given a feasible point $\Ybf\in\Fcal$, one can define binding constraints in order to form the Jacobian matrix at point $\Ybf$. The next definition extends this to points that are not necessarily feasible.
\begin{definition}[Quasi-binding Constraints] For every $\Ych\in\Rbb^{n\times m}$, define the set of quasi-binding constraints as:
	\begin{align}
		\Bcal_{\Ych}\triangleq\Ecal\cup\{k\in\Ical\;\big|\;\tilde{q}_k(\Ych)\geq 0\},
	\end{align}
	where, for each $k\in\{0\}\cup\Ecal\cup\Ical$, the expanded function $\tilde{q}_k:\Rbb^{n\times m}\to\Rbb$ is defined as:
	\begin{align}
		\tilde{q}_k(\Ych)\triangleq q_k(\Ych)+ \|\nabla q_k(\Ych)\|_{\Frm}\,
		d_{\Fcal}(\Ych)+
		\|\Abf_k\|\Tp_{2}\, d_{\Fcal}(\Ych)^2.\label{eq:binding}
	\end{align}	
	Observe that if $\Ych$ is feasible, then $\Bcal_{\Ych}$ is the set of binding constraints and $\tilde{q}_k(\Ych)=q_k(\Ych)$, for every $k\in\{0\}\cup\Ecal\cup\Ical$.
	\label{def:binding}\end{definition}

\begin{algorithm}[t]
	\caption{Sequential Penalized Parabolic Relaxation.}\label{alg:1}
	\begin{algorithmic}[1]
		\Require {$\Ybf^{(0)}\in\Rbb^{n\times m}$, $\eta>0$ and $l:=0$}
		\Repeat
		\State $l:=l+1$
		\State solve the penalized convex problem \eqref{prob_relax_pen_1a} -- \eqref{prob_relax_pen_1e} with $\Ych=\Ybf^{(l-1)}$ to obtain $\big(\Yst,\Xst\big)$
		\State $\Ybf^{(l)}:=\Yst$
		\Until {stopping criterion is met.}
		\Ensure {$\Ybf^{(l)}$}
	\end{algorithmic}\label{al:alg_1}
\end{algorithm}

The next definition offers a generalization for the notion of LICQ regularity as well as singularity of any given point in $\Rbb^{n\times m}$.
\begin{definition}[Generalized LICQ]
	For every $\Ych\in\Rbb^{n\times m}$ and $\Qcal\triangleq\{k_1,k_2,\ldots,k_{|\Ccal|}\}\subseteq\Ecal\cup\Ical$, define the Jacobian matrix as:  
	\begin{align}
		\Jbf_{\!\Qcal}(\Ych)\triangleq 
		\Big[\mathrm{vec}\big\{\nabla q\Tp_{k_1}(\Ych)\big\}, \mathrm{vec}\big\{\nabla q\Tp_{k_2}(\Ych)\big\}, 
		\ldots, \mathrm{vec}\big\{\nabla q\Tp_{k_{|\Ccal|}}(\Ych)\big\}\Big]\T.
	\end{align}
	The point $\Ych$ is said to satisfy GLICQ condition if the rows of $\Jbf_{\!\Bcal_{\Ych}}(\Ych)$  are linearly independent, where $\Bcal_{\Ych}$ is the set of quasi-binding constraints for $\Ych$. Moreover, the {\bf singularity function} $s:\Rbb^{n\times m}\to\Rbb$ is defined as: 
	\begin{align}
		s(\Ych)\triangleq
		\left\{\begin{matrix*}[l]
			\sigma_{\min}\big\{\Jbf_{\!\Bcal_{\Ych}}(\Ych)\big\}, &\qquad |\Bcal_{\Ych}|\leq n,\\ 
			0, & \qquad\text{otherwise,}
		\end{matrix*}\right.
	\end{align}
	where $\sigma_{\min}$ denotes the smallest singular value operator.
	\label{def:GLICQ}\end{definition}
In general, it is computationally hard to calculate the exact distance to $\Fcal$ and to verify GLICQ as a consequence. However, local search methods can be used in practice to find a local solution for \eqref{eq:feas}, resulting in upper bounds on the distance to $\Fcal$. The next definition introduces the notion of matrix pencil corresponding to problem \eqref{QCQP_tensor_a} -- \eqref{QCQP_tensor_c} as a measure for the intensity of constraint matrices. 
\begin{definition}[Pencil Norm] 
	For every $\Qcal\subseteq\Ecal\cup\Ical$, define the matrix function $p_{\Qcal}\Tp:\Rbb^{|\Qcal|}\to\Sbb_n$ as:
	\begin{align}
		p_{\Qcal}\Tp(\taubf)\triangleq\sum_{k\in\Qcal}\tau_k\Abf_k.
	\end{align}
	Moreover, for every $k\geq 0$ define the pencil norm $\rho_k$ as:
	\begin{align}
		\rho_k\triangleq\max
		\left\{\|p_{\Ecal\cup\Ical}\Tp(\taubf)\|\Tp_{k}\;\big|\;
		\taubf\in\Rbb^{|\Ecal\cup\Ical|}\;\wedge\;
		\|\taubf\|_2=1\right\},
	\end{align}
	which is upperbounded by $\sum_{k\in\Ecal\cup\Ical}{\|\Abf_k\|\Tp_k}$.
	\label{def:pencil}\end{definition}

\subsection{Statement of Theorems} 

The next theorem states that if $\eta$ is sufficiently large, then penalization preserves the feasibility of the initial point.

\begin{theorem}\label{thm_feas}
	Let $\Ych\in\Fcal$ be an LICQ feasible point for problem \eqref{QCQP_tensor_a} -- \eqref{QCQP_tensor_c}. If
	\begin{subequations}
		\begin{align}
			&\!\!\!\!\!\eta > \|\Abf_0\|\Tp_1+\|\Abf_0\|\Tp_2+ \frac
{
	2(2\rho_1+\rho_2)\|\nabla q_0(\Ych)\|_{\Frm}
}
{s(\Ych)},\\
			&\!\!\!\!\!\eta>\|\Abf_0\|\Tp_2+\|\nabla q_0(\Ych)\|_{\Frm}\left(
			\sqrt{\frac{\|\Abf_k\|\Tp_2}{|q_k(\Ych)|}}+
			\frac{\|\nabla q_k(\Ych)\|_{\Frm}}{|q_k(\Ych)|}\right),
			\quad\forall k\in\Ical\setminus\Bcal_{\Ych},
		\end{align}
	\end{subequations}
	where $\Bcal_{\Ych}$ is the set of binding constraints for $\Ych$, then the convex problem \eqref{prob_relax_pen_1a} -- \eqref{prob_relax_pen_1e} has a unique solution $(\Yst,\Xst)$ that satisfies $\Xst=\Yst\Yst\T$ and $q_0(\Yst)\leq q_0(\Ych)$. 
\end{theorem}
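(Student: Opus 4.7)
The plan is to analyze the convex relaxation \eqref{prob_relax_pen_1a}--\eqref{prob_relax_pen_1e} through its KKT conditions, with the two hypotheses on $\eta$ calibrated so that (i) the Lagrangian is strongly curved enough to yield a unique minimizer, (ii) the parabolic constraints become tight at the optimum, and (iii) inequality constraints non-binding at $\Ych$ remain so at $\Yst$. A convenient reformulation is to substitute the gap variable $M\triangleq\Xbf-\Ybf\Ybf\T$: the penalized objective becomes
\begin{align*}
q_0(\Ybf)+\tr\{(\Abf_0+\eta\Ibf_n)M\}+\eta\|\Ybf-\Ych\|_{\Frm}^2,
\end{align*}
the general constraints read $q_k(\Ybf)+\tr\{\Abf_k M\}=0$ (or $\leq 0$), and the parabolic constraints reduce to $M_{ii}+M_{jj}\geq 2|M_{ij}|$ for all $i,j$, placing $M$ in the convex cone $\mathcal{K}$ of scaled diagonally dominant symmetric matrices. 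The point $(\Ych,\Ych\Ych\T)$, i.e.\ $M=\boldsymbol{0}_{n\times n}$, lies in the feasible set because $\Ych\in\Fcal$, so the relaxation is non-empty and its objective at this point equals $q_0(\Ych)$.

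Existence and uniqueness of the minimizer will follow from the elementary inequality $|\tr\{\Abf M\}|\leq\|\Abf\|_1\tr\{M\}$ for $M\in\mathcal{K}$, which is a consequence of $|M_{ij}|\leq(M_{ii}+M_{jj})/2$. Combined with $\tr\{M\}\geq 0$ on $\mathcal{K}$ and the condition $\eta>\|\Abf_0\|_1$ implicit in hypothesis~(i), this makes the $M$-dependent part of the objective coercive; the strictly convex term $\eta\|\Ybf-\Ych\|_{\Frm}^2$ provides coercivity and strict convexity in $\Ybf$, jointly yielding a unique minimizer $(\Yst,\Xst)$.

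The main step is tightness $\Xst=\Yst\Yst\T$, which I would establish by constructing a dual certificate. I set $\lambda_k^*\triangleq 0$ for every $k\in\Ical\setminus\Bcal_{\Ych}$ and define $\{\lambda_k^*\}_{k\in\Bcal_{\Ych}}$ from the KKT stationarity condition in $\Ybf$ at $(\Yst,\Xst)$. LICQ at $\Ych$, together with continuity of the Jacobian $\Jbf_{\!\Bcal_{\Ych}}(\cdot)$ in a neighborhood containing $\Yst$ (bounded by the penalty-controlled displacement $\|\Yst-\Ych\|_{\Frm}^2\leq(q_0(\Ych)-q_0(\Yst))/\eta$), makes these multipliers well-defined and yields a bound $\|\lambdabf^*\|_2=O(\|\nabla q_0(\Ych)\|_{\Frm}/s(\Ych))$. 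The pencil-norm estimates $\|\sum_k\lambda_k^*\Abf_k\|_j\leq\rho_j\|\lambdabf^*\|_2$ for $j\in\{1,2\}$ then reduce hypothesis~(i) precisely to the requirement that $\Abf_0+\eta\Ibf_n+\sum_k\lambda_k^*\Abf_k$ lie strictly interior to the dual cone $\mathcal{K}^\star$. Since KKT stationarity in $\Xbf$ equates this matrix to the dual variable of the conic constraint $M\in\mathcal{K}$, strict interiority together with complementary slackness forces $M^*=\boldsymbol{0}_{n\times n}$, hence $\Xst=\Yst\Yst\T$. With tightness in hand, $\bar q_k(\Yst,\Yst\Yst\T)=q_k(\Yst)$ places $\Yst$ in $\Fcal$, and comparing the optimum with the objective at the feasible point $(\Ych,\Ych\Ych\T)$ (whose penalty vanishes) yields $q_0(\Yst)+\eta\|\Yst-\Ych\|_{\Frm}^2\leq q_0(\Ych)$, and therefore $q_0(\Yst)\leq q_0(\Ych)$.

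The main obstacle I anticipate is justifying $\lambda_k^*=0$ for $k\in\Ical\setminus\Bcal_{\Ych}$, i.e., showing that no such constraint becomes active at $\Yst$. Hypothesis~(ii) on $\eta$ is calibrated precisely for this: starting from the displacement bound above and the first-order expansion
\begin{align*}
q_k(\Yst)\leq q_k(\Ych)+\|\nabla q_k(\Ych)\|_{\Frm}\|\Yst-\Ych\|_{\Frm}+\|\Abf_k\|_2\|\Yst-\Ych\|_{\Frm}^2,
\end{align*}
the threshold given in (ii) guarantees $q_k(\Yst)<0$ for every $k\in\Ical\setminus\Bcal_{\Ych}$, keeping complementary slackness consistent with the choice $\lambda_k^*=0$. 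Threading together the LICQ-based multiplier bound, the pencil-norm estimate, and the slack control so that all KKT sign and complementarity conditions close simultaneously is the most delicate aspect of the proof.
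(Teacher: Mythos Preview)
The paper actually obtains this theorem as the special case $d_{\Fcal}(\Ych)=0$ of Theorem~\ref{thm_nfeas}, and the substantive machinery is built there through an \emph{auxiliary non-convex} problem \eqref{aux_1a}--\eqref{aux_1c}: minimize $q_0(\Ybf)+\eta\|\Ybf-\Ych\|_{\Frm}^2$ subject to the \emph{original} QCQP constraints. The candidate primal point is a solution $\Yst$ of that auxiliary problem; Lemmas~\ref{lm:d_bound}--\ref{lm:tau_bound} bound its displacement from $\Ych$, establish $\Bcal_{\Yst}\subseteq\Bcal_{\Ych}$ and LICQ at $\Yst$, and bound its Lagrange multipliers $\taust$. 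Only then does the transfer step (Lemma~\ref{lm:exact}) show that diagonal dominance of $\eta\Ibf_n+\Abf_0+p_{\Bcal_{\Yst}}(\taust)$ makes $(\Yst,\Yst\Yst^\top)$ the unique optimum of the convex relaxation.

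Your proposal attacks the convex relaxation directly, and this introduces a circularity you have not resolved. In the relaxed problem the $k$-th constraint is $\bar q_k(\Ybf,\Xbf)=q_k(\Ybf)+\tr\{\Abf_k M\}$ with $M=\Xbf-\Ybf\Ybf^\top$, so ``inactive at $(\Yst,\Xst)$'' means $q_k(\Yst)+\tr\{\Abf_k M^\ast\}<0$, not merely $q_k(\Yst)<0$. Your last paragraph controls only $q_k(\Yst)$ via a Taylor bound; hypothesis~(ii) as stated contains no budget to absorb $|\tr\{\Abf_k M^\ast\}|\leq\|\Abf_k\|_1\tr\{M^\ast\}$ when $M^\ast$ is not yet known to vanish, which is precisely what you are trying to prove. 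Hence you cannot legitimately set $\lambda_k^\ast=0$ for $k\in\Ical\setminus\Bcal_{\Ych}$ before establishing $M^\ast=0$, and without that you have no control over the multiplier vector needed to place the dual matrix in $\mathrm{int}\,\mathcal{K}^\star$. The paper avoids this entirely: because the auxiliary problem's constraints are $q_k(\Ybf)\leq 0$ with no lifted variable present, the binding-set inclusion $\Bcal_{\Yst}\subseteq\Bcal_{\Ych}$ (Lemma~\ref{lm:GLICQ}) is decided by $q_k(\Yst)$ alone, and the multiplier bound (Lemma~\ref{lm:tau_bound}) follows cleanly.

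A smaller gap: your uniqueness argument does not close. The objective is linear in $M$, so strict convexity in $\Ybf$ alone does not give a unique minimizer in $(\Ybf,M)$; moreover the substitution $M=\Xbf-\Ybf\Ybf^\top$ renders the general constraints non-convex in $(\Ybf,M)$, so you cannot appeal to convexity in that parameterization either. In the paper's route, uniqueness is a byproduct of the transfer lemma: strict interiority of the certificate in the dual cone forces $M^\ast=0$ at every optimum of the convex problem, after which convexity of the optimal set in $(\Ybf,\Xbf)$ pins down a single $\Yst$.
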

\begin{proof}
	Refer to Section \ref{sec_proofs} for the proof.
	\qed\end{proof}

The next theorem states that if $\Ych$ is not feasible but close to $\Fcal$, then penalization results in a feasible point as well.

\begin{theorem} \label{thm_nfeas}
	Let $\Ych\in\Rbb^{n\times m}$ be a GLICQ point for problem \eqref{QCQP_tensor_a} -- \eqref{QCQP_tensor_c} that satisfies:
	\begin{align}
		s(\Ych)> 2(\rho_1+\rho_2) d_{\Fcal}(\Ych).
	\end{align}
If
	\begin{subequations}
		\begin{align}
			&\eta > \|\Abf_0\|\Tp_1+\|\Abf_0\|\Tp_2+\nonumber\\
			&\quad\quad \frac
			{
				2\rho_1\|\Abf_0\|\Tp_1 d_{\Fcal}(\Ych)+
				2(2\rho_1+\rho_2)(\|\nabla q_0(\Ych)\|_{\Frm}+\|\Abf_0\|\Tp_2 d_{\Fcal}(\Ych))
				}
			{s(\Ych)- 2(\rho_1+\rho_2) d_{\Fcal}(\Ych)}, \label{thm_nfeas_cond1}\\
			&\eta>\|\Abf_0\|\Tp_2+\left( \|\nabla q_0(\Ych)\|_{\Frm}+\|\Abf_0\|\Tp_2 d_{\Fcal}(\Ych)\right)  \times \nonumber\\
			&
			\quad\quad \left(
			\sqrt{\frac{\|\Abf_k\|\Tp_2}{|\tilde{q}_k(\Ych)|}}+
			\frac{\|\nabla q_k(\Ych)\|_{\Frm}+2\|\Abf_k\|\Tp_2 d_{\Fcal}(\Ych)}{|\tilde{q}_k(\Ych)|^{\phantom{\frac{1}{2}}}}
			\right),
			\quad\forall k\in\Ical\setminus\Bcal_{\Ych}, \label{thm_nfeas_cond2}
		\end{align}
	\end{subequations}
	where $\Bcal_{\Ych}$ is the set of quasi-binding constraints for $\Ych$ and functions $\{\tilde{q}_k\}_{k\in\Ical}$ are defined by equation \eqref{eq:binding}, then the convex problem \eqref{prob_relax_pen_1a} -- \eqref{prob_relax_pen_1e} has a unique solution $(\Yst,\Xst)$ that satisfies $\Xst=\Yst\Yst\T$ and $q_0(\Yst)\leq \tilde{q}_0(\Ych)$.
\end{theorem}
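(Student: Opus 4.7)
My plan is to reduce the near-feasible setting of Theorem \ref{thm_nfeas} to the feasible setting of Theorem \ref{thm_feas} using the Euclidean projection of $\Ych$ onto $\Fcal$ as a bridge. Let $\bar{\Ybf}\in\Fcal$ achieve the infimum in \eqref{eq:feas}, so that $\|\bar{\Ybf}-\Ych\|_{\Frm}=d_{\Fcal}(\Ych)$. A second-order Taylor expansion combined with Cauchy--Schwarz gives $q_k(\bar{\Ybf})\leq \tilde{q}_k(\Ych)$ for every $k\in\{0\}\cup\Ecal\cup\Ical$, which is precisely what the definition of $\tilde{q}_k$ in \eqref{eq:binding} was engineered to provide. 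This single inequality furnishes three ingredients: (i) $\Bcal_{\bar{\Ybf}}\subseteq\Bcal_{\Ych}$, because $q_k(\bar{\Ybf})=0$ forces $\tilde{q}_k(\Ych)\geq 0$; (ii) the objective bound $q_0(\bar{\Ybf})\leq\tilde{q}_0(\Ych)$, which will transfer to the final guarantee; and (iii) $|q_k(\bar{\Ybf})|\geq|\tilde{q}_k(\Ych)|$ for each $k\in\Ical\setminus\Bcal_{\Ych}$, since both sides are negative and $q_k(\bar{\Ybf})\leq\tilde{q}_k(\Ych)<0$.

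Next I would transfer LICQ regularity from $\Ych$ to $\bar{\Ybf}$ quantitatively. Because $\nabla q_k(\Ych)-\nabla q_k(\bar{\Ybf})=2\Abf_k(\Ych-\bar{\Ybf})$, Definition \ref{def:pencil} yields $\|\Jbf_{\Bcal_{\Ych}}(\Ych)-\Jbf_{\Bcal_{\Ych}}(\bar{\Ybf})\|_2\leq 2\rho_2 d_{\Fcal}(\Ych)$, and Weyl's inequality for singular values gives $\sigma_{\min}(\Jbf_{\Bcal_{\Ych}}(\bar{\Ybf}))\geq s(\Ych)-2\rho_2 d_{\Fcal}(\Ych)$. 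Since $\Bcal_{\bar{\Ybf}}\subseteq\Bcal_{\Ych}$ and deleting rows of a wide full-row-rank matrix cannot decrease its minimum singular value, $s(\bar{\Ybf})\geq s(\Ych)-2\rho_2 d_{\Fcal}(\Ych)>2\rho_1 d_{\Fcal}(\Ych)\geq 0$ by the closeness hypothesis, so $\bar{\Ybf}$ is an LICQ feasible point with a quantitative regularity gap.

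The final step is to re-center the penalty at $\bar{\Ybf}$ via the identity
\begin{align*}
\eta\tr\{\Xbf-2\Ych\Ybf^{\!\top}+\Ych\Ych^{\!\top}\}\;=\;\eta\tr\{\Xbf-2\bar{\Ybf}\Ybf^{\!\top}+\bar{\Ybf}\bar{\Ybf}^{\!\top}\}+2\eta\langle\bar{\Ybf}-\Ych,\,\Ybf\rangle+\textrm{const},
\end{align*}
which displays \eqref{prob_relax_pen_1a}--\eqref{prob_relax_pen_1e} as the parabolic relaxation of a QCQP with the same constraints and the same quadratic part $\Abf_0$, but with linear term $\Bbf_0$ shifted to $\Bbf_0+\eta(\bar{\Ybf}-\Ych)$ and penalty now anchored at the LICQ-feasible $\bar{\Ybf}$. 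Theorem \ref{thm_feas} can then be applied at $\bar{\Ybf}$, and its $\eta$-threshold involves the shifted gradient $\nabla q_0(\bar{\Ybf})+2\eta(\bar{\Ybf}-\Ych)$, the quantity $s(\bar{\Ybf})$, and $|q_k(\bar{\Ybf})|$. Plugging in $\|\nabla q_0(\bar{\Ybf})\|_{\Frm}\leq\|\nabla q_0(\Ych)\|_{\Frm}+2\|\Abf_0\|_2 d_{\Fcal}(\Ych)$ along with the lower bounds on $s(\bar{\Ybf})$ and $|q_k(\bar{\Ybf})|$ from the previous two paragraphs, and solving the resulting implicit inequality in $\eta$, should reproduce conditions \eqref{thm_nfeas_cond1}--\eqref{thm_nfeas_cond2}. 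The conclusions of Theorem \ref{thm_feas}---uniqueness, exactness $\Xst=\Yst\Yst^{\!\top}$, and $q_0(\Yst)\leq q_0(\bar{\Ybf})$---then yield $q_0(\Yst)\leq\tilde{q}_0(\Ych)$ after undoing the shift.

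The main obstacle will be the last step: because the re-centering introduces an $\eta$-dependent perturbation of $\nabla q_0$, Theorem \ref{thm_feas}'s threshold is implicit in $\eta$, and a naive unwinding yields a denominator $s(\Ych)-O((\rho_1+\rho_2)d_{\Fcal}(\Ych))$ with suboptimal constants rather than the tight $s(\Ych)-2(\rho_1+\rho_2)d_{\Fcal}(\Ych)$ appearing in \eqref{thm_nfeas_cond1}. Recovering the sharp form will likely require opening up the proof of Theorem \ref{thm_feas} and tracking its dual-multiplier estimates directly through the $\bar{\Ybf}\to\Ych$ perturbation, rather than treating Theorem \ref{thm_feas} as a black box. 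A secondary subtlety is propagating strict inactivity to the inequalities indexed by $\Ical\setminus\Bcal_{\Ych}$ at the unique relaxation optimum: the $2\|\Abf_k\|_2 d_{\Fcal}(\Ych)$ correction inside the parentheses of \eqref{thm_nfeas_cond2} is exactly the slack needed to absorb the drift between $\bar{\Ybf}$ and $\Yst$, and verifying that this choice of slack is both necessary and sufficient is the most delicate piece of bookkeeping.
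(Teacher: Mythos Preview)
Your projection-and-re-center strategy is genuinely different from the paper's route, and the obstacle you anticipate in your last paragraph is real and not removable by merely ``opening up'' Theorem~\ref{thm_feas}. The paper never projects onto $\Fcal$ and never re-centers the penalty. Instead it introduces the auxiliary non-convex problem \eqref{aux_1a}--\eqref{aux_1c}, namely $\min_{\Ybf\in\Fcal}\,q_0(\Ybf)+\eta\|\Ybf-\Ych\|_{\Frm}^2$, and analyzes its minimizer $\Yst$ directly. A chain of lemmas does the work: Lemma~\ref{lm:d_bound} bounds $\|\Yst-\Ych\|_{\Frm}-d_\Fcal(\Ych)$; Lemma~\ref{lm:GLICQ} uses condition~\eqref{thm_nfeas_cond2} to show $\Bcal_{\Yst}\subseteq\Bcal_{\Ych}$ and transfers regularity from $\Ych$ to $\Yst$ (not to your $\bar\Ybf$) with the quantitative estimate $s(\Yst)\geq s(\Ych)-2\rho_2\|\Ych-\Yst\|_{\Frm}$; Lemma~\ref{lm:tau_bound} bounds the Lagrange multipliers of \eqref{aux_1a}--\eqref{aux_1c} by $\|\eta^{-1}\taust\|_2\leq s(\Yst)^{-1}\big(2\|\Yst-\Ych\|_{\Frm}+\eta^{-1}\|\nabla q_0(\Yst)\|_{\Frm}\big)$; and Lemma~\ref{lm:exact} shows that diagonal dominance of $\eta\Ibf_n+\Abf_0+p_{\Bcal_{\Yst}}(\taust)$ forces $(\Yst,\Yst\Yst\T)$ to be the unique relaxation optimum. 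Condition~\eqref{thm_nfeas_cond1} is exactly what makes $\eta-\|\Abf_0\|_1-\rho_1\|\taust\|_2>0$ once these bounds are chained together. In fact Theorem~\ref{thm_feas} is then deduced from Theorem~\ref{thm_nfeas} as the special case $d_\Fcal(\Ych)=0$, not the other way around.

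The structural difficulty with your reduction is the shift $\Bbf_0\mapsto\Bbf_0+\eta(\bar\Ybf-\Ych)$: it inflates the effective gradient norm entering Theorem~\ref{thm_feas}'s threshold by $2\eta\,d_\Fcal(\Ych)$, so the condition at $\bar\Ybf$ becomes $\eta>\mathrm{const}+2(2\rho_1+\rho_2)\big(\cdots+2\eta\,d_\Fcal(\Ych)\big)/s(\bar\Ybf)$. Solving this implicit inequality forces a denominator of order $s(\Ych)-(8\rho_1+6\rho_2)\,d_\Fcal(\Ych)$, strictly smaller than the paper's $s(\Ych)-2(\rho_1+\rho_2)\,d_\Fcal(\Ych)$, and in particular requires a closeness hypothesis substantially stronger than the stated one. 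Opening up the proof does not help in the way you hope: the $2\eta\,d_\Fcal(\Ych)$ term sits in the KKT stationarity equation of the re-centered problem itself, not merely in a bound that could be tightened. The only way to avoid it is to observe that the re-centered and original auxiliary problems are literally the same optimization (differing by a constant), and to apply the multiplier bound to the $\Ych$-centered formulation---at which point the projection $\bar\Ybf$ has played no role and you are executing the paper's direct argument.
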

\begin{proof}
	Refer to Section \ref{sec_proofs} for the proof.
	\qed\end{proof}

The next theorem offers a convergence guarantee for Algorithm \ref{alg:1}.

\begin{theorem}\label{thm_conv}
	Let $\Fch\triangleq\big\{\Ybf\in\Fcal\;|\;q_0(\Ybf)\leq\qch\big\}$ denote an epigraph of problem \eqref{QCQP_tensor_a} -- \eqref{QCQP_tensor_c} such that $s(\Ybf_1)^{-1}\|\nabla q_0(\Ybf_2)\|_{\Frm}$ is bounded for every $\Ybf_1,\Ybf_2\in\Fch$. If $\Ybf^{0}\in\Fch$, and
	\begin{align}
		\eta > \|\Abf\|_1+\|\Abf\|_2 +
	3\rho_1\,\frac{\max_{\Ybf\in\Fch}\{\|\nabla q_0(\Ybf)\|_{\Frm}\}}
		{\min_{\Ybf\in\Fch}\{s(\Ybf)\}},\label{asmp_conv}
	\end{align}
	then every member of the sequence generated by Algorithm \ref{alg:1} satisfies $\Xst^{(l)}=$\linebreak $\Yst^{(l)}(\Yst^{(l)})\T$. Moreover, the sequence $\{\Yst^{(l)}\}^{\infty}_{l=0}$ converges to a point that satisfies the Karush–Kuhn–Tucker optimality conditions for problem \eqref{QCQP_tensor_a} -- \eqref{QCQP_tensor_c}, with non-increasing objective values .
\end{theorem}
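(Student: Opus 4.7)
The plan is to bootstrap from Theorems \ref{thm_feas} and \ref{thm_nfeas} by induction and then pass to the limit in the KKT system of the convex relaxation. First, I would verify that the uniform bound \eqref{asmp_conv} dominates, for every $\Ych\in\Fch$, the pointwise penalty conditions required by Theorem \ref{thm_feas}: the assumed finiteness of $s(\Ybf_1)^{-1}\|\nabla q_0(\Ybf_2)\|_{\Frm}$ on $\Fch$ makes the ratio $\|\nabla q_0(\Ych)\|_{\Frm}/s(\Ych)$ uniformly bounded, and the coefficient $3\rho_1$ in \eqref{asmp_conv} plays the role of $2(2\rho_1+\rho_2)$ in Theorem \ref{thm_feas} (with an analogous uniform argument handling the inactive inequality conditions since $|q_k(\Ych)|$ stays bounded away from zero for $k\in\Ical\setminus\Bcal_{\Ych}$). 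By induction on $l$, applying Theorem \ref{thm_feas} with $\Ych=\Yst^{(l-1)}\in\Fch$ yields a unique relaxation optimum with $\Xst^{(l)}=\Yst^{(l)}(\Yst^{(l)})\T$, feasibility $\Yst^{(l)}\in\Fcal$, and objective descent $q_0(\Yst^{(l)})\leq q_0(\Yst^{(l-1)})\leq\qch$, so that $\Yst^{(l)}\in\Fch$ and the induction continues.

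Next, since $\{q_0(\Yst^{(l)})\}$ is monotone non-increasing and bounded below on $\Fch$, it converges to some $q^\ast$. To upgrade subsequential convergence to convergence of the whole sequence, I would exploit the strong convexity of the penalized objective \eqref{prob_relax_pen_1a} once $\Xst=\Yst\Yst\T$ is substituted: the penalty term contributes $\eta\|\Yst^{(l)}-\Yst^{(l-1)}\|_{\Frm}^2$, and optimality of $\Yst^{(l)}$ against the centering point $\Yst^{(l-1)}$ yields a sufficient-decrease inequality of the form
\begin{align*}
q_0(\Yst^{(l-1)}) - q_0(\Yst^{(l)}) \;\geq\; \alpha\,\|\Yst^{(l)}-\Yst^{(l-1)}\|_{\Frm}^2
\end{align*}
for some $\alpha>0$ depending on the gap $\eta-\|\Abf_0\|\Tp_1-\|\Abf_0\|\Tp_2$. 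Telescoping across $l$ and using the finite total decrease forces $\|\Yst^{(l)}-\Yst^{(l-1)}\|_{\Frm}\to 0$. Combined with compactness of $\Fch$ (implicit in the boundedness hypothesis on $s^{-1}\|\nabla q_0\|_{\Frm}$), this yields a unique limit point $\Ybf^\ast\in\Fch$.

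Finally, to identify $\Ybf^\ast$ as a KKT point of \eqref{QCQP_tensor_a}--\eqref{QCQP_tensor_c}, I would write the KKT system of the convex relaxation at iteration $l$: stationarity in $(\Ybf,\Xbf)$ with multipliers $\taubf^{(l)}$ for \eqref{prob_relax_pen_1b}--\eqref{prob_relax_pen_1c} and multipliers for the parabolic inequalities \eqref{prob_relax_pen_1d}--\eqref{prob_relax_pen_1e}, together with complementary slackness. Because $\Xst^{(l)}=\Yst^{(l)}(\Yst^{(l)})\T$ already satisfies \eqref{prob_relax_pen_1d}--\eqref{prob_relax_pen_1e} as equalities and these constraints are in the interior of the relaxation's active set structure, their multipliers can be shown to contribute zero to stationarity in $\Ybf$; moreover $\bar{q}_k(\Ybf,\Ybf\Ybf\T)=q_k(\Ybf)$ identifies $\taubf^{(l)}$ as candidate multipliers for the original QCQP. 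Passing to the limit $l\to\infty$ the penalty gradient $2\eta(\Yst^{(l)}-\Yst^{(l-1)})$ vanishes, so stationarity of the relaxation collapses exactly to stationarity of \eqref{QCQP_tensor_a}--\eqref{QCQP_tensor_c} at $\Ybf^\ast$, and feasibility plus complementary slackness are preserved by continuity.

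The main obstacle will be the multiplier analysis: showing that the sequence $\{\taubf^{(l)}\}$ is bounded and that its limit points satisfy complementary slackness for the original problem. Boundedness requires a Mangasarian--Fromovitz-style argument invoking the uniform GLICQ bound $s(\Ybf)\geq\min_{\Fch}s>0$, which ensures that the Jacobian $\Jbf_{\!\Bcal_{\Ybf^\ast}}(\Ybf^\ast)$ has full row rank and hence bounds $\|\taubf^{(l)}\|$ by a constant multiple of the residual in the stationarity equation. A second delicate point is justifying that parabolic multipliers for \eqref{prob_relax_pen_1d}--\eqref{prob_relax_pen_1e} do not leak into the stationarity condition in the limit; this should follow from the exactness result embedded in Theorem \ref{thm_feas} (namely $\Xst=\Yst\Yst\T$ throughout), but needs to be verified carefully using the specific structure of the gradients of the parabolic constraints at a rank-one lift.
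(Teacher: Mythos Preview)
Your bootstrap from Theorem \ref{thm_feas} does not go through, for two concrete reasons. First, the constant in \eqref{asmp_conv} is $3\rho_1$, whereas Theorem \ref{thm_feas} requires $2(2\rho_1+\rho_2)=4\rho_1+2\rho_2$ in front of $\|\nabla q_0(\Ych)\|_{\Frm}/s(\Ych)$; since $4\rho_1+2\rho_2>3\rho_1$ in general, \eqref{asmp_conv} is strictly \emph{weaker} than what Theorem \ref{thm_feas} asks, so you cannot simply say ``$3\rho_1$ plays the role of $2(2\rho_1+\rho_2)$''. Second, the inactive-constraint condition in Theorem \ref{thm_feas} involves $|q_k(\Ych)|^{-1}$ for each $k\in\Ical\setminus\Bcal_{\Ych}$, and there is no uniform lower bound on $|q_k(\Ych)|$ over $\Fch$: as $\Ych$ varies, a non-binding inequality can come arbitrarily close to binding, so your ``analogous uniform argument'' for that condition fails.

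The paper avoids both obstacles by \emph{not} invoking Theorem \ref{thm_feas} at all. It works directly with the auxiliary problem \eqref{aux_1a}--\eqref{aux_1c} and first uses the elementary descent inequality $q_0(\Yst)+\eta\|\Yst-\Ybf^{(l)}\|_{\Frm}^2\leq q_0(\Ybf^{(l)})$ to place the auxiliary optimum $\Yst$ back in $\Fch$. Once $\Yst\in\Fch$, LICQ at $\Yst$ comes for free from the standing hypothesis $\min_{\Fch}s>0$, so Lemma \ref{lm:GLICQ} (and with it the inactive-constraint conditions) is bypassed entirely. Lemmas \ref{lm:d_bound} and \ref{lm:tau_bound} are then applied with $d_{\Fcal}(\Ych)=0$ and with $s(\Yst)$ bounded below directly, which is what permits the tighter constant $3\rho_1$ in the diagonal-dominance estimate feeding Lemma \ref{lm:exact}. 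Your plan would recover a correct proof only under the stronger hypothesis $\eta>\|\Abf_0\|_1+\|\Abf_0\|_2+2(2\rho_1+\rho_2)\max_{\Fch}\|\nabla q_0\|/\min_{\Fch}s$ together with an additional uniform assumption on the inactive inequalities; to get the theorem as stated you need the direct route through the auxiliary problem.

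Your limit argument is in the right spirit, but note that the paper does not pass to the limit in the relaxation's KKT system. Instead it uses continuity of the map $h_\eta$ (Lemma \ref{lm:contin}) to conclude that the limit $\Ybf^{(\infty)}$ is a fixed point of $h_\eta$, and then observes that the KKT conditions of the auxiliary problem \eqref{aux_1a}--\eqref{aux_1c} at a fixed point (where the penalty gradient $\eta(\Yst-\Ych)$ vanishes) are exactly the KKT conditions of the original QCQP. This sidesteps your concern about ``leaking'' parabolic multipliers altogether.
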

\begin{proof}
	Refer to Section \ref{sec_proofs} for the proofs.
	\qed\end{proof}

\ignore{


\begin{remark}
It should be noted that the results and bounds of Theorems \ref{thm_feas}, \ref{thm_nfeas} and \ref{thm_conv} apply to the standard semidefinite programming (SDP) relaxation as well: 
\begin{subequations}
	\begin{align}
		&\underset{\begin{subarray}{l}\Ybf\in\Rbb^{n\times m}\\\hspace{-0.2mm}\Xbf\in\Sbb_n\end{subarray}}
		{\text{minimize}}&& 
		\!\!\!\bar{q}\Tp_0(\Ybf\!,\Xbf) +\eta\times \tr\{\Xbf-2\Ych\Ybf\T+\Ych\Ych\T\}\label{prob_relax_sdp_1a}\\
		&\text{subject to} && 
		\!\!\!\bar{q}\Tp_k(\Ybf\!,\Xbf) = 0,&&&&\!\!\!\!\!\!k\in\Ecal,\label{prob_relax_sdp_1b}\\
		& && 
		\!\!\!\bar{q}\Tp_k(\Ybf\!,\Xbf) \leq 0,&&&&\!\!\!\!\!\!k\in\Ical,\label{prob_relax_sdp_1c}\\
		& && 
		\!\!\!\Xbf\succeq\Ybf\Ybf\T.&&&&
		\!\!\!\!\!\!\label{prob_relax_sdp_1d}
	\end{align}
\end{subequations}
\end{remark}
\begin{proof}
Under the assumptions of Theorem \ref{thm_nfeas}, let $(\Yst,\Xst)$ be a solution to the convex problem \eqref{prob_relax_pen_1a} -- \eqref{prob_relax_pen_1e}, whose existence and uniqueness is guaranteed by the theorem. Since, $\Xst=\Yst\Yst\T$, the pair $(\Yst,\Xst)$ is feasible for \eqref{prob_relax_sdp_1a} -- \eqref{prob_relax_sdp_1d} as well. Now, since \eqref{prob_relax_pen_1a} -- \eqref{prob_relax_pen_1e} is a relaxation of \eqref{prob_relax_sdp_1a} -- \eqref{prob_relax_sdp_1d}, the pair $(\Yst,\Xst)$ is the unique solution to \eqref{prob_relax_sdp_1a} -- \eqref{prob_relax_sdp_1d}. The proof for Theorem \ref{thm_feas} is identical.

Under the assumptions of Theorem \ref{thm_conv}, every member of the sequence generated by Algorithm \ref{alg:1} satisfies $\Xst^{(l)}=\Yst^{(l)}(\Yst^{(l)})\T$. This means that this sequence is identical to that of SDP relaxation \eqref{prob_relax_sdp_1a} -- \eqref{prob_relax_sdp_1d} for sufficiently large $\eta$ values, as assumed by Theorem \ref{thm_conv}.
\end{proof}
}

\subsection{Nesterov's Acceleration} 

In this section, we explore the possibility of applying Nesterov's acceleration scheme to Algorithm \ref{alg:1} as a heuristic approach. As delineated in Algorithm \eqref{alg:2}, the point $\big(\Yst^{(l)},\Xst^{(l)}\big)$ denotes the solution to problem \eqref{prob_relax_pen_1a} -- \eqref{prob_relax_pen_1e} with initial point $\Ych=\Ych^{(l)}$ and penalty term $\eta_{l}\Tp\!$, where:
\begin{align}
	\Ych^{(l)}:=(1-\lambda_{l})\Yst^{(l-1)}+\lambda_{l}\Ych^{(l-1)},
\end{align}
and the values $\{\lambda_{l}\Tp\!\in[0,1)\}^{\infty}_{l=1}$ and $\{\eta_{l}\Tp\!>0\}^{\infty}_{l=1}$ are determined dynamically at each step. Due to non-convexity of $\Fcal$, even if $\Yst^{(l-1)}$ is feasible, the point $\Ych^{(l)}$ may not belong to $\Fcal$ and Theorem \ref{thm_feas} cannot be used to ensure the feasibility of $\Yst^{(l)}$. 

\begin{algorithm}[b]
	\caption{Accelerated Sequential Heuristic}\label{alg:2}
	\begin{algorithmic}[1]
		\Require {$\Ych^{(0)}\in\Rbb^{n\times m}$, $\lambda_{0}: = 1$ and $\ell:=0$}
		\Repeat
		\State $\ell:=\ell+1$
		\State select $\eta_{\ell}\Tp\!>0$ and $\lambda_{l}\Tp\!\in[0,1)$
		\State $\Ych^{(\ell)}:=(1-\lambda_{\ell})\Yst^{(\ell-1)}+\lambda_{\ell}\Ych^{(\ell-1)}$
		\State solve the penalized convex problem \eqref{prob_relax_pen_1a} -- \eqref{prob_relax_pen_1e} with $\Ych=\Ych^{(\ell)}$ and $\eta=\eta\Tp_{\ell}$ to obtain $\big(\Yst^{(\ell)},\Xst^{(\ell)}\big)$
		\Until {stopping criterion is met.}
		\Ensure {$\Yst^{(\ell)}$}
	\end{algorithmic}\label{al:alg_2}
\end{algorithm}

Despite this issue, we show that if $\Yst^{(l-1)}$ is feasible and satisfies LICQ, then it is possible to select the values $\eta\Tp_{l}$ and $\lambda_{l}$ in step 3 of Algorithm \eqref{alg:2} such that the conditions of Theorem \ref{thm_nfeas} are satisfied, 
which guarantees that $\Yst^{(l)}\in\Fcal$. To this end, assume that:
	\begin{subequations}
	\begin{align}
		&\eta_l > \|\Abf_0\|\Tp_1+\|\Abf_0\|\Tp_2+2(2\rho_1+\rho_2)\,\frac
		{\|\nabla q_0(\Yst^{(l-1)})\|_{\Frm}}
		{s(\Yst^{(l-1)})},\\
		&\eta_l>\|\Abf_0\|\Tp_2+\|\nabla q_0(\Yst^{(l-1)})\|_{\Frm}\times \nonumber\\
		&
		\quad\quad \left(
		\sqrt{\frac{\|\Abf_k\|\Tp_2}{|q_k(\Yst^{(l-1)})|}}+
		\frac{\|\nabla q_k(\Yst^{(l-1)})\|_{\Frm}}{|q_k(\Yst^{(l-1)})|^{\phantom{\frac{1}{2}}}}
		\right),
		\quad\forall k\in\Ical\setminus\Bcal_{\Yst^{(l-1)}},
	\end{align}
\end{subequations}
Now, since
\begin{align}
	d_\Fcal\big(\Ych^{(l)}\big)\!=\!d_\Fcal\big(\Yst^{(l-1)}+\lambda_{l}(\Ych^{(l-1)}-\Yst^{(l-1)})\big)\leq\lambda_{l}\|\Ych^{(l-1)}-\Yst^{(l-1)}\|_{\Frm},
\end{align}
if $\lambda_{l}$ is sufficiently small,  then $\Ych^{(l)}$ can get arbitrarily close to $\Yst^{(l-1)}$ and, as a consequence, 
we have: 
\begin{subequations}
	\begin{align}
		\Bcal_{\Ych^{(l)}} &\subseteq \Bcal_{\Yst^{(l-1)}},\nonumber\\
		s\big(\Ych^{(l)}\big) &> 4\rho\; d_{\Fcal}\big(\Ych^{(l)}\big),\nonumber\\
			\eta\Tp_{l}\! &>\! \|\Abf_0\|\Tp_1+\|\Abf_0\|\Tp_2+\nonumber\\
&\frac
{
	2\rho_1\|\Abf_0\|\Tp_1 d_{\Fcal}\big(\Ych^{(l)}\big)+
	2(2\rho_1+\rho_2)\big[\|\nabla q_0\big(\Ych^{(l)}\big)\|_{\Frm}+\|\Abf_0\|\Tp_2 d_{\Fcal}\big(\Ych^{(l)}\big)\big]
}
{s\big(\Ych^{(l)}\big)- 2(\rho_1+\rho_2) d_{\Fcal}\big(\Ych^{(l)}\big)},\nonumber\\
		\eta\Tp_{l}\! &>\!\|\Abf_0\|\Tp_2\!+\!\left( \|\nabla q_0\big(\Ych^{(l)}\big)\|_{\Frm}\!+\|\Abf_0\|\Tp_2 d_{\Fcal}\big(\Ych^{(l)}\big)\right) \! \times \nonumber\\
		&  \left(\!
		\sqrt{\frac{\|\Abf_k\|\Tp_2}{|\tilde{q}_k\big(\Ych^{(l)}\big)|}}\!+
		\frac{\|\nabla q_k\big(\Ych^{(l)}\big)\|_{\Frm}\!+\!2\|\Abf_k\|\Tp_2 d_{\Fcal}\big(\Ych^{(l)}\big)}{|\tilde{q}_k\big(\Ych^{(l)}\big)|^{\phantom{\frac{1}{2}}}}
		\!\right),
		\quad\forall k\!\in\!\Ical\!\setminus\!\Bcal_{\Ych^{(l)}},\nonumber
	\end{align}
\end{subequations}
where $\Bcal_{\Ych^{(l)}}$ is the set of quasi-binding constraints for $\Ych^{(l)}$. Therefore, according to Theorem \ref{thm_nfeas}, we have $\Yst^{(l)}\in\Fcal$. This way, one may ensure the feasibility of the sequence $\{\Yst^{(l)}\}^\infty_{l=1}$. We leave the convergence and theoretical analysis of this heuristic for future work.

\subsection{Proofs}\label{sec_proofs}

In order to prove Theorems \ref{thm_feas}, \ref{thm_nfeas},  and \ref{thm_conv}, we consider the following auxiliary optimization problem:
\begin{subequations}
	\begin{align}
		&\underset{\Ybf\in\Rbb^{n\times m}}{\text{minimize}}&& 
		\!\!\!q\Tp_0(\Ybf)+\eta\|\Ybf-\Ych\|_{\Frm}^2  \label{aux_1a}\\
		&\text{subject to} && 
		\!\!\!q\Tp_k(\Ybf) = 0,&&&&\!\!\!\!\!\!k\in\Ecal,\label{aux_1b}\\
		& && 
		\!\!\!q\Tp_k(\Ybf) \leq 0,&&&&\!\!\!\!\!\!k\in\Ical.\label{aux_1c}
	\end{align}
\end{subequations}
Observe that the convex problem \eqref{prob_relax_pen_1a} -- \eqref{prob_relax_pen_1e} is a relaxation of \eqref{aux_1a} -- \eqref{aux_1c} and this is the motivation for introducing \eqref{aux_1a} -- \eqref{aux_1c}.  

The next two lemmas show that by increasing the penalty term $\eta$, the optimal solution $\Yst$ can get as close to the initial point $\Ych$ as $d_{\Fcal}(\Ych)$. This lemma will later be used to
show that $\Yst$ can inherit GLICQ property from $\Ych$.
\begin{lemma}\label{lm:q_bound}
	The following inequality holds for any $k\in\{0\}\cup\Ecal\cup\Ical$ and arbitrary $\Ybf_{\!\!1},\Ybf_{\!\!2}\in\Rbb^{n\times m}$:
	\begin{subequations}
		\begin{align}
			&|q_k(\Ybf_{\!\!1})-q_k(\Ybf_{\!\!2})|\leq 
			\|\Abf_k\|\Tp_2\|\Ybf_{\!\!1}-\Ybf_{\!\!2}\|_{\Frm}^2+
			\|\nabla q_k(\Ybf_{\!\!2})\|_{\Frm}\|\Ybf_{\!\!1}-\Ybf_{\!\!2}\|_{\Frm},
			\label{lm1:eq1}\\
			&|\|\nabla q_k(\Ybf_{\!\!1})\|_{\Frm} - \|\nabla q_k(\Ybf_{\!\!2})\|_{\Frm}|\leq 2\|\Abf_k\|\Tp_2\|\Ybf_{\!\!1}-\Ybf_{\!\!2}\|_{\Frm}.
			\label{lm1:eq2}
		\end{align}
	\end{subequations}
\end{lemma}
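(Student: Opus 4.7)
The plan is to prove both inequalities using the exact second-order Taylor expansion of $q_k$, which holds with equality because $q_k$ is a quadratic function.

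First I would write $\Delta\triangleq\Ybf_{\!\!1}-\Ybf_{\!\!2}$ and expand
\begin{align*}
q_k(\Ybf_{\!\!1})-q_k(\Ybf_{\!\!2}) \;=\; \langle\nabla q_k(\Ybf_{\!\!2}),\Delta\rangle + \tr\{\Delta\T\Abf_k\Delta\},
\end{align*}
using the identity $q_k(\Ybf)=\tr\{\Ybf\T\Abf_k\Ybf\}+2\tr\{\Bbf_k\T\Ybf\}+c_k$ and the fact that $\nabla q_k(\Ybf)=2(\Abf_k\Ybf+\Bbf_k)$. For inequality \eqref{lm1:eq1}, the Cauchy–Schwarz inequality applied to the Frobenius inner product controls the linear term by $\|\nabla q_k(\Ybf_{\!\!2})\|_{\Frm}\|\Delta\|_{\Frm}$, while the quadratic term satisfies $|\tr\{\Delta\T\Abf_k\Delta\}|\leq\|\Abf_k\|_2\|\Delta\|_{\Frm}^2$ via the standard spectral/operator-norm bound for the quadratic form of a symmetric matrix (that is, $|\tr\{\Delta\T\Abf_k\Delta\}|$ is at most the largest singular value of $\Abf_k$ times $\|\Delta\|_{\Frm}^2$). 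Combining the two bounds by the triangle inequality yields the claim.

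For inequality \eqref{lm1:eq2}, I would exploit that $\nabla q_k$ is an affine function of $\Ybf$, so the difference simplifies to $\nabla q_k(\Ybf_{\!\!1})-\nabla q_k(\Ybf_{\!\!2})=2\Abf_k\Delta$. Taking Frobenius norms and using the operator-norm bound $\|\Abf_k\Delta\|_{\Frm}\leq\|\Abf_k\|_2\|\Delta\|_{\Frm}$ gives $\|\nabla q_k(\Ybf_{\!\!1})-\nabla q_k(\Ybf_{\!\!2})\|_{\Frm}\leq 2\|\Abf_k\|_2\|\Delta\|_{\Frm}$. Applying the reverse triangle inequality on the Frobenius norm then produces the desired bound on $|\|\nabla q_k(\Ybf_{\!\!1})\|_{\Frm}-\|\nabla q_k(\Ybf_{\!\!2})\|_{\Frm}|$.

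No part of this argument is genuinely obstructive; it is essentially a bookkeeping exercise combining the quadratic Taylor identity with Cauchy–Schwarz and the submultiplicativity of the operator norm with respect to the Frobenius norm. The only minor subtlety worth stating clearly is the inequality $|\tr\{\Delta\T\Abf_k\Delta\}|\leq\|\Abf_k\|_2\|\Delta\|_{\Frm}^2$, which follows by diagonalizing the symmetric matrix $\Abf_k=\Ubf[\lambdabf]\Ubf\T$ and noting that $\tr\{\Delta\T\Abf_k\Delta\}=\sum_i\lambda_i\|\Ubf\T_i\Delta\|_2^2$ is bounded in absolute value by $\max_i|\lambda_i|\sum_i\|\Ubf\T_i\Delta\|_2^2=\|\Abf_k\|_2\|\Delta\|_{\Frm}^2$. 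With that one-line justification in hand, both inequalities follow immediately.
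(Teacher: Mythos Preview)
Your proof is correct and follows essentially the same route as the paper: the paper also expands $q_k(\Ybf_{\!\!1})-q_k(\Ybf_{\!\!2})$ into the quadratic term $\langle\Abf_k,(\Ybf_{\!\!1}-\Ybf_{\!\!2})(\Ybf_{\!\!1}-\Ybf_{\!\!2})\T\rangle$ plus the linear term $2\langle\Abf_k\Ybf_{\!\!2}+\Bbf_k,\Ybf_{\!\!1}-\Ybf_{\!\!2}\rangle$, bounds them by $\|\Abf_k\|_2\|\Ybf_{\!\!1}-\Ybf_{\!\!2}\|_{\Frm}^2$ and $\|\nabla q_k(\Ybf_{\!\!2})\|_{\Frm}\|\Ybf_{\!\!1}-\Ybf_{\!\!2}\|_{\Frm}$ respectively, and declares \eqref{lm1:eq2} straightforward. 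Your explicit diagonalization justification for the spectral bound and your reverse-triangle-inequality argument for \eqref{lm1:eq2} spell out exactly the details the paper leaves implicit.
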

\begin{proof}
	The proof of \eqref{lm1:eq2} is straightforward. Additionally:
	\begin{subequations}
		\begin{align}
			\!\!\!\!\!\!|q_k(\Ybf_{\!\!1}) - &q_k(\Ybf_{\!\!2})|\!=\! |\la\Abf\Tp_k,(\Ybf_{\!\!1}-\Ybf_{\!\!2})(\Ybf_{\!\!1}-\Ybf_{\!\!2})\T\ra
			\!+\!2\la\Abf\Tp_k\Ybf_{\!\!2}\!+\!\Bbf_k\Tp,\Ybf_{\!\!1}-\Ybf_{\!\!2}\ra|\\
			&\!\leq\! |\la\Abf\Tp_k,(\Ybf_{\!\!1}-\Ybf_{\!\!2})(\Ybf_{\!\!1}-\Ybf_{\!\!2})\T\ra|
			\!+\!2|\la\Abf\Tp_k\Ybf_{\!\!2}\!+\!\Bbf_k\Tp,\Ybf_{\!\!1}-\Ybf_{\!\!2}\ra|\\
			&\!\leq\!\|\Abf_k\|\Tp_2\|\Ybf_{\!\!1}-\Ybf_{\!\!2}\|_{\Frm}^2+
			2\|\Abf\Tp_k\Ybf_{\!\!2}+\Bbf_k\Tp\|_{\Frm}\|\Ybf_{\!\!1}-\Ybf_{\!\!2}\|_{\Frm}\\
			&\!\leq\!\|\Abf_k\|\Tp_{2}\|\Ybf_{\!\!1}-\Ybf_{\!\!2}\|_{\Frm}^2+
			\|\nabla q_k(\Ybf_{\!\!2})\|_{\Frm}\|\Ybf_{\!\!1}-\Ybf_{\!\!2}\|_{\Frm},
		\end{align}
	\end{subequations}
	which proves \eqref{lm1:eq1}.
	\qed\end{proof}

\begin{lemma}\label{lm:d_bound}
	If $\eta>\|\Abf_0\|\Tp_2$, then every optimal solution $\Yst$ of problem \eqref{aux_1a} -- \eqref{aux_1b} satisfies:
	\begin{align}
		0\leq\|\Yst-\Ych\|_{\Frm}-d_{\Fcal}(\Ych)\leq \frac{\|\Abf_0\|\Tp_2 d_{\Fcal}(\Ych)+\|\nabla q_0(\Ych)\|_{\Frm} }{\eta-\|\Abf_0\|\Tp_2},\label{bound_d}
	\end{align}
	where $d_{\Fcal}$ is defined by equation \eqref{eq:feas}.
\end{lemma}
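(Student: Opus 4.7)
The plan is to prove the two inequalities in \eqref{bound_d} separately. The lower bound $0 \leq \|\Yst - \Ych\|_{\Frm} - d_{\Fcal}(\Ych)$ is immediate: because $\Yst$ is feasible for \eqref{aux_1a}--\eqref{aux_1c} it belongs to $\Fcal$, so $\|\Yst - \Ych\|_{\Frm}$ is no smaller than the infimum defining $d_{\Fcal}(\Ych)$. For the upper bound, I would introduce a projection point $\Ybf^{\#} \in \Fcal$ realizing $\|\Ybf^{\#} - \Ych\|_{\Frm} = d_{\Fcal}(\Ych)$, which exists because $\Fcal$ is closed (being cut out by continuous quadratic functions) and the distance to $\Ych$ is coercive.

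Since $\Ybf^{\#}$ is feasible for the auxiliary problem and $\Yst$ is optimal, the cornerstone inequality is
\begin{align*}
q_0(\Yst) + \eta\|\Yst - \Ych\|_{\Frm}^2 \;\leq\; q_0(\Ybf^{\#}) + \eta\, d_{\Fcal}(\Ych)^2,
\end{align*}
which rearranges into $\eta\bigl(\|\Yst - \Ych\|_{\Frm}^2 - d_{\Fcal}(\Ych)^2\bigr) \leq q_0(\Ybf^{\#}) - q_0(\Yst)$. Next, I would invoke Lemma \ref{lm:q_bound} twice with $k = 0$, using $\Ych$ as a common reference point for both $\Ybf^{\#}$ and $\Yst$. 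Combining the two absolute-value bounds yields
\begin{align*}
q_0(\Ybf^{\#}) - q_0(\Yst) \;\leq\; \|\Abf_0\|\Tp_2\bigl(d_{\Fcal}(\Ych)^2 + \|\Yst - \Ych\|_{\Frm}^2\bigr) + \|\nabla q_0(\Ych)\|_{\Frm}\bigl(d_{\Fcal}(\Ych) + \|\Yst - \Ych\|_{\Frm}\bigr).
\end{align*}

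To finish, I would set $u \triangleq \|\Yst - \Ych\|_{\Frm}$ and $d \triangleq d_{\Fcal}(\Ych)$, combine the two displays, and collect terms to arrive at $(\eta - \|\Abf_0\|\Tp_2)(u^2 - d^2) \leq 2\|\Abf_0\|\Tp_2\, d^2 + \|\nabla q_0(\Ych)\|_{\Frm}(u + d)$. Factoring $u^2 - d^2 = (u - d)(u + d)$ and dividing by $u + d > 0$ (the degenerate case $u = d = 0$ makes the claim trivial), the remaining term $2d^2/(u + d)$ is bounded above by $d$ thanks to the already-proved lower bound $u \geq d$, giving $(\eta - \|\Abf_0\|\Tp_2)(u - d) \leq \|\Abf_0\|\Tp_2\, d + \|\nabla q_0(\Ych)\|_{\Frm}$. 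Dividing by the positive quantity $\eta - \|\Abf_0\|\Tp_2$ produces exactly the upper bound stated in \eqref{bound_d}.

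The main obstacle is the algebraic reduction in the final step: the optimality comparison produces a quadratic inequality with a stubborn $2\|\Abf_0\|\Tp_2\, d^2$ remainder, and one must notice that factoring $u^2 - d^2$ and invoking $u \geq d$ collapses the quadratic gap into the desired linear bound on $u - d$. Anchoring Lemma \ref{lm:q_bound} at $\Ych$, rather than comparing $q_0$ at $\Ybf^{\#}$ and $\Yst$ directly, is also essential so that only $\|\nabla q_0(\Ych)\|_{\Frm}$ and $\|\Abf_0\|\Tp_2$ (both computable at the given initial point) appear on the right-hand side.
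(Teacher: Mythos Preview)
Your proposal is correct and follows essentially the same route as the paper's proof: introduce a projection point $\bar{\Ybf}\in\Fcal$ with $\|\bar{\Ybf}-\Ych\|_{\Frm}=d_{\Fcal}(\Ych)$, compare the penalized objective at $\Yst$ and $\bar{\Ybf}$, apply Lemma~\ref{lm:q_bound} at the common anchor $\Ych$ to bound $q_0(\bar{\Ybf})-q_0(\Ych)$ and $q_0(\Ych)-q_0(\Yst)$, then factor $u^2-d^2=(u+d)(u-d)$ and use $u\geq d$ to control the residual $2\|\Abf_0\|\Tp_2 d^2/(u+d)\leq \|\Abf_0\|\Tp_2 d$. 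Your explicit handling of the degenerate case $u=d=0$ and of the existence of the projection point (closedness of $\Fcal$) are small clarifications the paper leaves implicit, but otherwise the arguments coincide.
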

\begin{proof}
	According to Definition \ref{def:feas}, the distance between an arbitrary point $\Ych$ and any point in $\Fcal$ is greater than or equal to $d_{\Fcal}(\Ych)$. This implies that $\|\Yst-\Ych\|_{\Frm}-d_{\Fcal}(\Ych)$ is lower bounded by zero. To prove the upper bound, let $\bar{\Ybf}$ be an arbitrary member of $\{\Ybf\in\Fcal\,|\,\|\Ybf-\Yst\|_{\Frm}=d_{\Fcal}(\Yst)\}$. Since $\Yst$ is an optimal solution to problem \eqref{aux_1a} -- \eqref{aux_1b}, one can write:
	\begin{subequations}
		\begin{align}
			-\|\nabla q_0(\Ych)\|_{\Frm}
			&\|\Yst-\Ych\|_{\Frm}+\big(\eta-\|\Abf_0\|\Tp_2\big)\|\Yst-\Ych\|_{\Frm}^2\\
			\leq& -q_0(\Ych)+q_0(\Yst)+\eta\|\Yst-\Ych\|_{\Frm}^2\label{eq_eq_1}\\
			\leq& -q_0(\Ych)+q_0(\bar{\Ybf})+\eta\|\bar{\Ybf}-\Ych\|_{\Frm}^2\\
			\leq& \|\nabla q_0(\Ych)\|_{\Frm}\|\bar{\Ybf}-\Ych\|_{\Frm}
			+\big(\eta+\|\Abf_0\|\Tp_2\big)\|\bar{\Ybf}-\Ych\|_{\Frm}^2\!\!\!\!\label{eq_eq_2}\\
			=& \|\nabla q_0(\Ych)\|_{\Frm} d_{\Fcal}(\Ych)+\big(\eta+\|\Abf_0\|\Tp_2\big)d_{\Fcal}(\Ych)^2,
		\end{align}
	\end{subequations}
	where \eqref{eq_eq_1} and \eqref{eq_eq_2} are concluded form Lemma \ref{lm:q_bound}. Hence:
\begin{align}
&-\|\nabla q_0(\Ych)\|_{\Frm} \big(\|\Yst-\Ych\|_{\Frm}+ d_{\Fcal}(\Ych)\big)
+\big(\eta-\|\Abf_0\|\Tp_2\big)\big(\|\Yst-\Ych\|_{\Frm}^2-d_{\Fcal}(\Ych)^2\big)\nonumber\\
&\leq 2\|\Abf_0\|\Tp_2 d_{\Fcal}(\Ych)^2,
\end{align}	
and
\begin{align}
			&\left(\|\Yst-\Ych\|_{\Frm}+d_{\Fcal}(\Ych)\right)
			\left[\left(\|\Yst-\Ych\|_{\Frm}-d_{\Fcal}(\Ych)\right)\big(\eta-\|\Abf_0\|\Tp_2\big)-
			\|\nabla q_0(\Ych)\|_{\Frm}\right] \nonumber\\
			&\leq 2\|\Abf_0\|\Tp_2 d_{\Fcal}(\Ych)^2.
					\end{align}
Now due to feasibility of $\Yst$  we have $d_{\Fcal}(\Ych)\leq \|\Yst-\Ych\|_{\Frm}$ which implies: \pagebreak
			\begin{align}
			&\left(\|\Yst-\Ych\|_{\Frm}-d_{\Fcal}(\Ych)\right)\big(\eta-\|\Abf_0\|\Tp_2\big)-
			\|\nabla q_0(\Ych)\|_{\Frm} \nonumber\\ 
			&\leq \|\Abf_0\|\Tp_2 d_{\Fcal}(\Ych)\times\frac{2 d_{\Fcal}(\Ych)}{\|\Yst-\Ych\|_{\Frm}+d_{\Fcal}(\Ych)} \leq \|\Abf_0\|\Tp_2 d_{\Fcal}(\Ych),
		\end{align}
	which concludes the right side of \eqref{bound_d}.
	\qed\end{proof}

The next lemma shows that if $\Ych$ satisfies GLICQ and $\eta$ is sufficiently large, then $\Yst$ satisfies GLICQ as well.
\begin{lemma}\label{lm:GLICQ} Let $\Ych\in\Rbb^{n\times m}$ be a GLICQ point that satisfies 
	$s(\Ych) > 2\rho\, d_{\Fcal}(\Ych)$ and assume that:
	\begin{subequations}
		\begin{align}	
			&\eta>\|\Abf_0\|\Tp_2+2\rho_2\times
			\frac{\|\nabla q_0(\Ych)\|_{\Frm}+\|\Abf_0\|\Tp_2 d_{\Fcal}(\Ych)}
			{s(\Ych) - 2\rho_2\, d_{\Fcal}(\Ych)},\label{qs_bound}\\
			&\eta>\|\Abf_0\|\Tp_2+\left(\|\nabla q_0(\Ych)\|_{\Frm}+\|\Abf_0\|\Tp_2 d_{\Fcal}(\Ych)\right) \times \nonumber\\
			& \quad\quad \left(
			\sqrt{\frac{\|\Abf_k\|\Tp_2}{|\tilde{q}_k(\Ych)|}}+
			\frac{\|\nabla q_k(\Ych)\|_{\Frm}+2\|\Abf_k\|\Tp_2 d_{\Fcal}(\Ych)}{|\tilde{q}_k(\Ych)|^{\phantom{\frac{1}{2}}}}
			\right),
			\qquad\forall k\in\Ical\setminus\Bcal_{\Ych}, \label{q_bound}
		\end{align}
	\end{subequations}
	where $\Bcal_{\Ych}$ denotes the set of quasi-binding constraints for $\Ych$. Then, every solution $\Yst$ of problem \eqref{aux_1a} -- \eqref{aux_1b} is LICQ and satisfies:
	\begin{align}
		&s(\Yst)\geq s(\Ych) - 2\rho_2\,\|\Ych-\Yst\|_{\Frm},\label{sss_bound}
	\end{align}
	where functions $\{\tilde{q}_k\}_{k\in\Ical}$ are defined in \eqref{eq:binding} and $\Bcal_{\Ych}$ denotes the set of quasi-binding constraints for $\Ych$.
\end{lemma}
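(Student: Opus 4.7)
The plan is to combine an a priori bound on $\|\Yst-\Ych\|_{\Frm}$ from Lemma \ref{lm:d_bound} with a perturbation argument for the minimum singular value of the Jacobian. Setting $\delta\triangleq(\|\Abf_0\|\Tp_2\, d_{\Fcal}(\Ych)+\|\nabla q_0(\Ych)\|_{\Frm})/(\eta-\|\Abf_0\|\Tp_2)$, Lemma \ref{lm:d_bound} yields $\|\Yst-\Ych\|_{\Frm}\le d_{\Fcal}(\Ych)+\delta$, and this upper bound shrinks as $\eta$ grows, which will drive the subsequent estimates.

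First I would show that $\Bcal_{\Yst}\subseteq \Bcal_{\Ych}$. Since $\Yst$ is feasible for \eqref{aux_1a} -- \eqref{aux_1c}, the set $\Bcal_{\Yst}$ coincides with $\Ecal\cup\{k\in\Ical\,|\,q_k(\Yst)=0\}$, so it suffices to establish $q_k(\Yst)<0$ strictly for every $k\in\Ical\setminus\Bcal_{\Ych}$. Using the identity $|q_k(\Ych)|=|\tilde{q}_k(\Ych)|+\|\nabla q_k(\Ych)\|_{\Frm}\,d_{\Fcal}(\Ych)+\|\Abf_k\|\Tp_2\, d_{\Fcal}(\Ych)^2$ (valid because $\tilde{q}_k(\Ych)<0$ when $k\notin\Bcal_{\Ych}$) in conjunction with \eqref{lm1:eq1}, the desired strict inequality reduces to showing $\delta A+\delta^2 B<1$, where $A\triangleq(2\|\Abf_k\|\Tp_2\, d_{\Fcal}(\Ych)+\|\nabla q_k(\Ych)\|_{\Frm})/|\tilde{q}_k(\Ych)|$ and $B\triangleq\|\Abf_k\|\Tp_2/|\tilde{q}_k(\Ych)|$. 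Condition \eqref{q_bound} rearranges to $\delta(A+\sqrt{B})<1$; since $A\ge 0$, this implies $\delta\sqrt{B}<1$, so $\delta^2 B=(\delta\sqrt{B})^2\le\delta\sqrt{B}$, and therefore $\delta A+\delta^2 B\le\delta(A+\sqrt{B})<1$, as required.

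With the inclusion $\Bcal_{\Yst}\subseteq\Bcal_{\Ych}$ in hand, I would bound $s(\Yst)$ via standard perturbation tools. Each row of $\Jbf_{\Bcal_{\Yst}}(\Yst)-\Jbf_{\Bcal_{\Yst}}(\Ych)$ equals $2\,\mathrm{vec}\{\Abf_k(\Yst-\Ych)\}\T$ (the linear term in each gradient cancels), so for any unit $\vbf$ indexed by $\Bcal_{\Yst}$ one has $\|\vbf\T(\Jbf_{\Bcal_{\Yst}}(\Yst)-\Jbf_{\Bcal_{\Yst}}(\Ych))\|_2=2\,\|p_{\Bcal_{\Yst}}\Tp(\vbf)\,(\Yst-\Ych)\|_{\Frm}\le 2\rho_2\|\Yst-\Ych\|_{\Frm}$, where the last inequality uses Definition \ref{def:pencil} after zero-padding $\vbf$ to an index set over $\Ecal\cup\Ical$. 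Weyl's perturbation inequality then yields $\sigma_{\min}(\Jbf_{\Bcal_{\Yst}}(\Yst))\ge \sigma_{\min}(\Jbf_{\Bcal_{\Yst}}(\Ych))-2\rho_2\|\Yst-\Ych\|_{\Frm}$, and a Courant-Fischer argument shows $\sigma_{\min}(\Jbf_{\Bcal_{\Yst}}(\Ych))\ge \sigma_{\min}(\Jbf_{\Bcal_{\Ych}}(\Ych))=s(\Ych)$, because deleting rows from a wide matrix cannot decrease its smallest singular value. Combining these two inequalities yields \eqref{sss_bound}. LICQ at $\Yst$ then follows by substituting $\|\Yst-\Ych\|_{\Frm}\le d_{\Fcal}(\Ych)+\delta$ into \eqref{sss_bound}: condition \eqref{qs_bound} is calibrated precisely so that $s(\Ych)-2\rho_2(d_{\Fcal}(\Ych)+\delta)>0$, giving $s(\Yst)>0$.

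The principal obstacle will be the algebraic step that unpacks the square-root-laden condition \eqref{q_bound} into the polynomial inequality $\delta A+\delta^2 B<1$; the bootstrap $\delta\sqrt{B}<1\Rightarrow\delta^2 B\le\delta\sqrt{B}$ is the key trick, and without it the exponents do not align. Everything else is either a direct citation of Lemma \ref{lm:d_bound} or a routine application of Weyl and Courant-Fischer.
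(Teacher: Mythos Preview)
Your proposal is correct and follows essentially the same two-step strategy as the paper: first establish $\Bcal_{\Yst}\subseteq\Bcal_{\Ych}$ by combining the distance bound from Lemma~\ref{lm:d_bound} with the quadratic growth estimate \eqref{lm1:eq1}, then deduce \eqref{sss_bound} via a perturbation/row-deletion argument on the Jacobian. The only cosmetic differences are your introduction of the shorthand $A,B,\delta$ (which streamlines the algebra the paper does by direct substitution) and the order in which you apply the row-deletion and perturbation steps; both orders give the same bound.
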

\begin{proof}
	We first need to show that $\Bcal_{\Yst}\subseteq\Bcal_{\Ych}$, where $\Bcal_{\Yst}$ denotes the set of quasi-binding constraints for $\Yst$. Let $k\in\Ical\setminus\Bcal_{\Ych}$. Then, according to assumption \eqref{q_bound} and Lemma \ref{lm:d_bound}, we have: 
	\begin{subequations}
	\begin{align}
		\|\Ych-\Yst\|_{\Frm}-&d_{\Fcal}(\Ych) 
		\leq
		\frac{\|\Abf_0\|\Tp_2 d_{\Fcal}(\Ych) + \|\nabla q_0(\Ych)\|_{\Frm} }{\eta-\|\Abf_0\|\Tp_2}\\
		<& \frac{|\tilde{q}_k(\Ych)|}
		{\|\nabla q_k(\Ych)\|_{\Frm}+2\|\Abf_k\|\Tp_2 d_{\Fcal}(\Ych)+
			\sqrt{\|\Abf_k\|\Tp_2|\tilde{q}_k(\Ych)|} },
	\end{align}
	\end{subequations}
	where the second line is concluded by substituting the lower bound for $\eta$. Hence: 
	\begin{align}
		q_k(\Yst) \leq& q_k(\Ych)+ \|\nabla q_k(\Ych)\|_{\Frm}\|\Ych-\Yst\|_{\Frm}+\|\Abf_k\|\Tp_2\|\Ych-\Yst\|_{\Frm}^2\nonumber\\
		\leq& q_k(\Ych)\!+\! \|\nabla q_k(\Ych)\|_{\Frm}\!\left(d_{\Fcal}(\Ych)
		+\frac{ \|\Abf_0\|\Tp_2 d_{\Fcal}(\Ych)\!+\!\|\nabla q_0(\Ych)\|_{\Frm} }{\eta-\|\Abf_0\|\Tp_2}\right)
		\nonumber\\
		&\!+\!\|\Abf_k\|\Tp_2\!\left(d_{\Fcal}(\Ych)\!+\!\frac{ \|\Abf_0\|\Tp_2 d_{\Fcal}(\Ych) + \|\nabla q_0(\Ych)\|_{\Frm} }{\eta-\|\Abf_0\|\Tp_2}\right)^{\!\!2}\nonumber\\
		=& \tilde{q}_k(\Ych)\!+\!
		\frac{ \|\Abf_0\|\Tp_2 d_{\Fcal}(\Ych) + \|\nabla q_0(\Ych)\|_{\Frm} }{\eta-\|\Abf_0\|\Tp_2}\nonumber\\
		&\times \left(\|\nabla q_k(\Ych)\|_{\Frm}+2\|\Abf_k\|\Tp_2 d_{\Fcal}(\Ych)
		+\|\Abf_k\|\Tp_2\frac{ \|\Abf_0\|\Tp_2 d_{\Fcal}(\Ych) + \|\nabla q_0(\Ych)\|_{\Frm} }{\eta-\|\Abf_0\|\Tp_2}
		\right) \nonumber\\
		<& \tilde{q}_k(\Ych)\!+\!
		\frac{ |\tilde{q}_k(\Ych)|}
		{\|\nabla q_k(\Ych)\|_{\Frm}\!+\!2\|\Abf_k\|\Tp_2 d_{\Fcal}(\Ych)\!+\!
			\sqrt{\|\Abf_k\|\Tp_2|\tilde{q}_k(\Ych)|}}\nonumber\\
		&\times \left(\|\nabla q_k(\Ych)\|_{\Frm}\!+\!2\|\Abf_k\|\Tp_2 d_{\Fcal}(\Ych)
		\!+\!\frac{\|\Abf_k\|\Tp_2 |\tilde{q}_k(\Ych)|}
		{\sqrt{\|\Abf_k\|\Tp_2|\tilde{q}_k(\Ych)|}}
		\right) \nonumber\\
		=& \tilde{q}_k(\Ych)+|\tilde{q}_k(\Ych)|=0,
	\end{align}
	which implies that $k\notin\Bcal_{\Yst}$, and therefore $\Bcal_{\Yst}\subseteq\Bcal_{\Ych}$.
	
	Finally, the LICQ regularity of $\Yst$ and the lower bound \eqref{sss_bound} on $s(\Yst)$ can be proven as follows:
	\begin{subequations}	
		\begin{align}
			s(\Yst) &= \min\!\big\{\lVert\Jbf_{\!\Bcal_{\Yst}}(\Yst)\T\xibf\|_{2}\,|\,\|\xibf\|_2=1\big\}\\
			&\geq \min\!\big\{\lVert\Jbf_{\!\Bcal_{\Ych}}(\Yst)\T\nubf\|_{2}\,|\,\|\nubf\|_2=1\big\}\\
			&\geq \min\!\big\{\lVert\Jbf_{\!\Bcal_{\Ych}}(\Ych)\T\nubf\|_{2}-\|[\Jbf_{\!\Bcal_{\Ych}}(\Ych)-\Jbf_{\!\Bcal_{\Ych}}(\Yst)]\T\nubf\|_{2}
			\,|\,\|\nubf\|_2=1\big\}\\
			&\geq  \min\!\big\{\lVert\Jbf_{\!\Bcal_{\Ych}}(\Ych)\T\nubf\|_{2}
			\,|\,\|\nubf\|_2=1\big\} \nonumber\\
			& \quad\quad - 2\max\!\big\{\|p_{\Bcal_{\Ych}}(\nubf)(\Ych-\Yst)\|_{\Frm}\,|\,\|\nubf\|_2=1\big\}\\
			&\geq s(\Ych) - 2\max\!\big\{\|p_{\Bcal_{\Ych}}(\nubf)\|_2\,|\,\|\nubf\|_2=1\big\}\|\Ych-\Yst\|_{\Frm}\\
			&\geq s(\Ych) - 2 \rho_2\times \|\Ych-\Yst\|_{\Frm}\\
			&\geq s(\Ych) - 2 \rho_2\times \left(d_{\Fcal}(\Ych)+\frac{\|\Abf_0\|\Tp_2 d_{\Fcal}(\Ych) + \|\nabla q_0(\Ych)\|_{\Frm}  }{\eta-\|\Abf_0\|\Tp_2}\right)>0,
		\end{align}
	\end{subequations}
	where the last inequality is resulted from \eqref{qs_bound}.
	\qed\end{proof}

The next lemma offers a bound on the dual optimal solution of \eqref{aux_1a} -- \eqref{aux_1c}, which will be used for proving the theorems.
\begin{lemma}\label{lm:tau_bound} Let $\Yst$ be an LICQ optimal solution to problem \eqref{aux_1a} -- \eqref{aux_1b}. There exists a vector of Lagrange multipliers $\taust\in\Rbb^{|{\Bcal_{\Yst}}|}$ associated with $\Yst$ that satisfies:
	\begin{align}
		\|\eta^{-1}\taust\|_{2}\leq 
		\frac{2\|\Yst-\Ych\|_{\Frm}+\eta^{-1}\|\nabla q_0(\Yst)\|_{\Frm}}{s(\Yst)}.
	\end{align}
\end{lemma}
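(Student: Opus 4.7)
The plan is to derive the bound directly from the first-order (KKT) stationarity condition at the optimal solution $\Yst$ and then invert the resulting linear system using the LICQ hypothesis. First, because $\Yst$ is a feasible optimum of the smooth program \eqref{aux_1a} -- \eqref{aux_1c} and LICQ holds at $\Yst$, standard nonlinear programming theory guarantees the existence of a unique vector of Lagrange multipliers $\taust \in \Rbb^{|\Bcal_{\Yst}|}$ associated with the binding constraints such that
\begin{align}
\nabla q_0(\Yst) + 2\eta\,(\Yst - \Ych) + \sum_{k\in \Bcal_{\Yst}} \tau^{*}_{k}\, \nabla q_k(\Yst) \;=\; \boldsymbol{0}_{n\times m},
\end{align}
with $\tau^{*}_{k}\geq 0$ for $k\in \Ical\cap \Bcal_{\Yst}$. (The non-binding inequality multipliers vanish by complementary slackness, so only the indices in $\Bcal_{\Yst}$ appear.)

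Next, I would vectorize this equation and express the sum using the Jacobian matrix from Definition \ref{def:GLICQ}, giving
\begin{align}
\Jbf_{\!\Bcal_{\Yst}}(\Yst)\T\, \taust \;=\; -\,\mathrm{vec}\{\nabla q_0(\Yst)\} - 2\eta\,\mathrm{vec}\{\Yst - \Ych\}.
\end{align}
Since $\Yst$ satisfies LICQ, the rows of $\Jbf_{\!\Bcal_{\Yst}}(\Yst)$ are linearly independent, so $\Jbf_{\!\Bcal_{\Yst}}(\Yst)\T$ has full column rank and its smallest singular value equals $s(\Yst)>0$. Applying the elementary bound $\|\taust\|_{2} \leq \|\Jbf_{\!\Bcal_{\Yst}}(\Yst)\T\taust\|_{2}\big/\sigma_{\min}\{\Jbf_{\!\Bcal_{\Yst}}(\Yst)\}$ and then the triangle inequality on the right-hand side yields
\begin{align}
\|\taust\|_{2} \;\leq\; \frac{\|\nabla q_0(\Yst)\|_{\Frm} + 2\eta\,\|\Yst - \Ych\|_{\Frm}}{s(\Yst)}.
\end{align}
Dividing both sides by $\eta$ produces exactly the inequality in the statement.

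The argument is essentially a one-line application of KKT plus a minimum-norm bound, so there is no single hard step; the only point requiring care is justifying that LICQ at the feasible point $\Yst$ suffices to (i) guarantee the existence of KKT multipliers and (ii) identify $s(\Yst)$ with $\sigma_{\min}\{\Jbf_{\!\Bcal_{\Yst}}(\Yst)\}$ (which follows from $|\Bcal_{\Yst}|\leq n$ whenever LICQ holds, placing us in the first branch of Definition \ref{def:GLICQ}). Everything else reduces to linear algebra on the stationarity relation.
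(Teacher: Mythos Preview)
Your proposal is correct and follows essentially the same route as the paper: write the KKT stationarity condition at $\Yst$, vectorize it so that the multiplier term becomes $\Jbf_{\!\Bcal_{\Yst}}(\Yst)\T\taust$, and then bound $\|\taust\|_2$ via the minimum singular value (the paper phrases this step using the Moore--Penrose pseudoinverse, but the inequality is identical). The only cosmetic difference is that the paper writes the stationarity relation in terms of $\Abf_0\Yst+\Bbf_0$ rather than $\nabla q_0(\Yst)$, which accounts for the factor of~$2$ appearing in a slightly different place.
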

\begin{proof}
	Due to LICQ, there exists a vector of Lagrange multipliers $\taust\in\Rbb^{|\Bcal_{\Yst}|}$ associated with $\Yst$ that satisfies KKT stationarity condition:
	\begin{align}
		\eta(\Yst-\Ych)+(\Abf\Tp_0\Yst+\Bbf\Tp_0)
		+{\sum}_{k\in\Bcal_{\Yst}}\;\accentset{\ast}{\tau}_k(\Abf\Tp_k\Yst+\Bbf_k\Tp)=\boldsymbol{0}_{n\times m},
	\end{align}
	and, therefore:
	\begin{align}
		2\mathrm{vec}\{\eta(\Yst-\Ych)+(\Abf_0\Yst+\Bbf_0)\}
		+\Jbf_{\!\Bcal_{\Yst}}(\Yst)\T\taust=\boldsymbol{0}_{n m}.
	\end{align}
	Since $\Jbf_{\!\Bcal_{\Yst}}(\Yst)$ is full-rank, we have:
	\begin{align}
		\taust=-2\Jbf^{+}_{\!\Bcal_{\Yst}}(\Yst)\T\mathrm{vec}\{\eta(\Yst-\Ych)+(\Abf_0\Yst+\Bbf_0)\}.
	\end{align}
	Hence:
	\begin{subequations}
		\begin{align}
			\|\eta^{-1}\taust\|_{2}&\leq 2\|\Jbf^{+}_{\!\Bcal_{\Yst}}(\Yst)\|_{2}\times\|
			\mathrm{vec}\{(\Yst-\Ych)+\eta^{-1}(\Abf_0\Yst+\Bbf_0)\}\|_{2}
			\\
			&= 2s(\Yst)^{-1}\|
			(\Yst-\Ych)+\eta^{-1}(\Abf_0\Yst+\Bbf_0)\|_{\Frm}
			\\
			&\leq s(\Yst)^{-1}
			\big(2\|\Yst-\Ych\|_{\Frm}
			+\eta^{-1}\|\nabla q_0(\Yst)\|_{\Frm}\big),
		\end{align}
	\end{subequations}
	which proves the lemma.
	\qed\end{proof}

The next lemma offers a sufficient condition on the success of penalization.

\begin{lemma}\label{lm:exact} Let $\Yst$ and $\taust\in\Rbb^{|\Bcal_{\Yst}|}$ be a pair of primal and dual optimal solutions for problem \eqref{aux_1a} -- \eqref{aux_1b}. If the matrix:
	\begin{align}
		\Lambdast\triangleq \eta\Ibf_n+\Abf_0+p_{\Bcal_{\Yst}}(\taust),
		\nonumber
	\end{align}
	is diagonally-dominant, then $(\Yst,\Xst)$ is the unique optimal solution for problem \eqref{prob_relax_pen_1a} -- \eqref{prob_relax_pen_1e}, where $\Xst\triangleq\Yst\Yst\T$.
\end{lemma}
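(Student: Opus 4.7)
The plan is to exhibit KKT multipliers for the convex relaxation \eqref{prob_relax_pen_1a}--\eqref{prob_relax_pen_1e} at the candidate point $(\Yst,\Xst)$, with the diagonal-dominance hypothesis on $\Lambdast$ furnishing the multipliers associated with the parabolic inequalities, and then to deduce uniqueness from the resulting Lagrangian structure. First, I would verify feasibility: since $\Xst=\Yst\Yst\T$, every $\bar{q}_k(\Yst,\Xst)$ collapses to $q_k(\Yst)$, so constraints \eqref{prob_relax_pen_1b}--\eqref{prob_relax_pen_1c} follow from the primal feasibility of $\Yst$ in the auxiliary problem \eqref{aux_1b}--\eqref{aux_1c}, and the parabolic inequalities \eqref{prob_relax_pen_1d}--\eqref{prob_relax_pen_1e} hold with equality because $(\ebf_i\pm\ebf_j)\T\Yst\Yst\T(\ebf_i\pm\ebf_j)=\Xst_{ii}\pm 2\Xst_{ij}+\Xst_{jj}$.

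Second, for the dual certificate, the equality/inequality constraints of the relaxation receive the multipliers $\taust$ from Lemma \ref{lm:tau_bound} (extended by zero on $\Ical\setminus\Bcal_{\Yst}$). The crux is the parabolic constraints: the hypothesis $\Lambdast\in\Dbb_n$ is precisely what is needed to write
\[
\Lambdast=\sum_{i,j\in\Ncal}\mu^-_{ij}(\ebf_i-\ebf_j)(\ebf_i-\ebf_j)\T+\sum_{i,j\in\Ncal}\mu^+_{ij}(\ebf_i+\ebf_j)(\ebf_i+\ebf_j)\T
\]
with $\mu^\pm_{ij}\geq 0$, obtained by absorbing the positive (resp.\ negative) part of each off-diagonal $\Lambdast_{ij}$ into $\mu^+_{ij}$ (resp.\ $\mu^-_{ij}$) and the remaining diagonal slack $\Lambdast_{ii}-\sum_{j\neq i}|\Lambdast_{ij}|\geq 0$ into $\mu^+_{ii}$ via the $i=j$ case of \eqref{prob_relax_pen_1e}. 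With these parabolic multipliers, stationarity with respect to $\Xbf$ reduces to the definitional identity $\Abf_0+\eta\Ibf_n+p_{\Bcal_{\Yst}}(\taust)=\Lambdast$, while stationarity with respect to $\Ybf$, after substituting $\Lambdast\Yst=(\eta\Ibf_n+\Abf_0+p_{\Bcal_{\Yst}}(\taust))\Yst$, coincides with the auxiliary stationarity already provided by Lemma \ref{lm:tau_bound}. Complementary slackness is automatic (active parabolic constraints, vanishing quasi-binding $q_k$, and $\accentset{\ast}{\tau}_k=0$ for $k\notin\Bcal_{\Yst}$), and convexity of the relaxation yields optimality.

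For uniqueness, these multipliers eliminate all $\Xbf$-dependence from the Lagrangian, reducing it to a convex quadratic in $\Ybf$ with Hessian $2\Lambdast\succeq\boldsymbol{0}_{n\times n}$ (diagonal dominance with nonnegative diagonal gives positive semidefiniteness) that is minimized at $\Yst$. Any second primal optimum $(\Ybf',\Xbf')$ must both satisfy complementary slackness --- forcing every parabolic constraint active and hence $\Xbf'=\Ybf'\Ybf'\T$ --- and minimize the Lagrangian, so $\Lambdast(\Ybf'-\Yst)=\boldsymbol{0}_{n\times m}$. The main obstacle is excluding non-trivial $\Ybf'-\Yst$ in the kernel of $\Lambdast$; I plan to close it by coupling the tight active parabolic system linking $\Ybf'\Ybf'\T$ with $\Yst\Yst\T$ with the rigidity imposed by the equality constraints at $\Yst$ to conclude $\Ybf'=\Yst$.
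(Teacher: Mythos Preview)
Your approach is essentially the paper's: verify KKT for the convex relaxation at $(\Yst,\Xst)$ using $\taust$ on the lifted equality/inequality constraints and the diagonal-dominance decomposition of $\Lambdast$ as the multipliers for the parabolic constraints \eqref{prob_relax_pen_1d}--\eqref{prob_relax_pen_1e}. The paper is in fact terser than you are---it simply records the four KKT conditions \eqref{kkt_lifted_1}--\eqref{kkt_lifted_4} (stationarity in $\Xbf$ and $\Ybf$, primal feasibility on $\Bcal_{\Yst}$, and complementary slackness $\langle\Lambdast,\Xst-\Yst\Yst\T\rangle=0$) and observes they hold, without spelling out the $\mu^{\pm}_{ij}$ decomposition you give.

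Regarding the uniqueness obstacle you flag: the paper's proof does not address it either; it stops after verifying KKT. Your worry about $\ker\Lambdast$ is legitimate under the hypothesis as stated, but note that every invocation of this lemma in the paper (proofs of Theorems \ref{thm_feas}, \ref{thm_nfeas}, \ref{thm_conv}) actually establishes the strict inequality $\eta-\|\Abf_0\|_1-\rho_1\|\taust\|_2>0$, i.e., $\Lambdast$ is \emph{strictly} diagonally dominant and hence positive definite. Under that strengthening the Lagrangian is strictly convex in $\Ybf$, forcing $\Ybf'=\Yst$, and the strictly positive diagonal multipliers $\mu^+_{ii}>0$ force $X'_{ii}=(\Ybf'\Ybf'^{\!\top})_{ii}$, which combined with the remaining parabolic inequalities pins down $\Xbf'=\Ybf'\Ybf'^{\!\top}$. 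So the cleanest fix is to assume strict diagonal dominance rather than pursue the coupling argument you sketch.
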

\begin{proof}
	Since $\Lambdast$ is diagonally-dominant, it can serve as a Lagrange multiplier matrix corresponding to the constraints \eqref{prob_relax_pen_1d} -- \eqref{prob_relax_pen_1e}. To prove that $(\Yst,\Xst)$ and $(\taust,\Lambdast)$ are primal and dual optimal solutions, it suffices to verify the following KKT conditions: 
	\begin{subequations}
		\begin{align}
			&\text{Stationarity with respect to $\Xbf$:} &&\hspace{0mm}\eta\Ibf_n+\Abf_0+p_{\Bcal_{\Yst}}(\taust)-\Lambdast = \boldsymbol{0}_{n\times n}, \label{kkt_lifted_1}\\
			&\text{Stationarity with respect to $\Ybf$:} &&\hspace{0mm}\Lambdast\Yst  -\eta\Ych+\Bbf\Tp_0
			+{\sum}_{k\in\Bcal_{\Yst}}\;\accentset{\ast}{\tau}_k\Bbf_k\Tp =\boldsymbol{0}_{n\times m},\label{kkt_lifted_2}\\
			&\text{Primal feasibility:} &&\hspace{0mm}\bar{q}_k(\Yst,\Xst) =0, \qquad k\in\Bcal_{\Yst},\label{kkt_lifted_3}\\
			&\text{Complementary slackness:} &&\hspace{0mm}\la\Lambdast,\Xst-\Yst\Yst\T\ra=0.\label{kkt_lifted_4}
		\end{align}
	\end{subequations}
	Stationarity with respect to $\Xbf$ is followed from the definition of $\Lambdast$.
	Additionally, \eqref{kkt_lifted_2} is followed from the definition of $\Lambdast$, as well as the optimality of $\Yst$ for problem \eqref{aux_1a} -- \eqref{aux_1b}. Finally, the remaining conditions
	\eqref{kkt_lifted_3} and
	\eqref{kkt_lifted_4}
	can be concluded from the definition of $\Xst$ and feasibility of $\Yst$ for \eqref{aux_1a} -- \eqref{aux_1b}.
	\qed\end{proof}

\begin{proof}[Theorem \ref{thm_nfeas}]
	Let $\Yst$ be an optimal solution for problem \eqref{aux_1a} -- \eqref{aux_1b}. According to Lemma \ref{lm:GLICQ}, the point $\Yst$ is LICQ. As a result, there exists a vector of dual variables $\taust\in\Rbb^{|\Bcal_{\Yst}|}$ corresponding to $\Yst$, where $\Bcal_{\Yst}\in\Ecal\cup\Ical$ is the set of binding constraints for $\Yst$. Now, according to Lemmas \ref{lm:q_bound}, \ref{lm:d_bound}, \ref{lm:GLICQ}, and \ref{lm:tau_bound}, we have:
	\begin{align}
		\!\!\!\!\!\!\!\!&\frac{\rho_1\times \|\eta^{-1}\taust\|_{2}}{1-\eta^{-1}\|\Abf_0\|_1\Tp}
		\leq\frac{\rho_1}{1-\eta^{-1}\|\Abf_0\|_1\Tp}
		\times\frac{2\|\Yst-\Ych\|_{\Frm}
			+\eta^{-1}\|\nabla q_0(\Yst)\|_{\Frm}}{s(\Yst)}\nonumber\\
		&\leq\frac{\rho_1}{1-\eta^{-1}\|\Abf_0\|_2\Tp}
		\times\frac{2(1+\eta^{-1}\|\Abf_0\|_2\Tp)\|\Yst-\Ych\|_{\Frm}
			+\eta^{-1}\|\nabla q_0(\Ych)\|_{\Frm}}{s(\Ych) - 2\rho_2\times \|\Ych-\Yst\|_{\Frm}}\nonumber\\
		&\leq\frac{\rho_1}{1-\eta^{-1}\|\Abf_0\|_1\Tp}
		\times\frac{2(1+\eta^{-1}\|\Abf_0\|_2\Tp)\frac{d_{\Fcal}(\Ych) +\eta^{-1}\|\nabla q_0(\Ych)\|_{\Frm}}{1-\eta^{-1}\|\Abf_0\|_2\Tp}
			+\eta^{-1}\|\nabla q_0(\Ych)\|_{\Frm}}{s(\Ych) - 2\rho_2\times \frac{d_{\Fcal}(\Ych) +\eta^{-1}\|\nabla q_0(\Ych)\|_{\Frm}}{1-\eta^{-1}\|\Abf_0\|_2\Tp}}\nonumber\\
		&=\frac{\rho_1}{1-\eta^{-1}\|\Abf_0\|_1\Tp}
		\times \nonumber\\
		&\frac{2(1+\eta^{-1}\|\Abf_0\|_2\Tp)(d_{\Fcal}(\Ych) +\eta^{-1}\|\nabla q_0(\Ych)\|_{\Frm})
			+\eta^{-1}(1-\eta^{-1}\|\Abf_0\|_2\Tp)\|\nabla q_0(\Ych)\|_{\Frm}}{(1-\eta^{-1}\|\Abf_0\|_2\Tp)s(\Ych) - 2\rho_2(d_{\Fcal}(\Ych) +\eta^{-1}\|\nabla q_0(\Ych)\|_{\Frm})}\nonumber\\
		&=\frac{\rho_1}{1-\eta^{-1}\|\Abf_0\|_1\Tp}
		\times \nonumber\\
		&\frac{2d_{\Fcal}(\Ych)
			+\left(2\|\Abf_0\|_2\Tp d_{\Fcal}(\Ych)+3\|\nabla q_0(\Ych)\|_{\Frm}\right)\eta^{-1}
			+\|\Abf_0\|_2\Tp\|\nabla q_0(\Ych)\|_{\Frm}\eta^{-2}}
		{s(\Ych)-2\rho_2 d_{\Fcal}(\Ych) -\big(\|\Abf_0\|_2\Tp s(\Ych) + 2\rho_2\|\nabla q_0(\Ych)\|_{\Frm}\big)\eta^{-1}}\nonumber\\
		&\leq\frac{2\rho_1 d_{\Fcal}(\Ych)
	+2\rho_1\left(\|\Abf_0\|_2\Tp d_{\Fcal}(\Ych)+2\|\nabla q_0(\Ych)\|_{\Frm}\right)\eta^{-1}}
{\big(s(\Ych)-2\rho_2 d_{\Fcal}(\Ych)\big)\big( 1-\eta^{-1}\|\Abf_0\|_1\Tp\big) -\big(\|\Abf_0\|_2\Tp s(\Ych) + 2\rho_2\|\nabla q_0(\Ych)\|_{\Frm}\big)\eta^{-1}}\nonumber\\
&\leq1
	\end{align}
	and, the last inequality is equivalent to \eqref{thm_nfeas_cond1}. Hence:
	\begin{align}
		\eta-\|\Abf_0\|_1\Tp-\rho_1\times \|\taust\|_{2} > 0,
	\end{align}
	which implies that the matrix $\eta\Ibf_n+\Abf_0+p_{\Bcal_{\Yst}}(\taust)$ is diagonally-dominant and according to Lemma \ref{lm:exact}, the point $(\Yst,\Yst\Yst\T)$ is the unique optimal solution for problem \eqref{prob_relax_pen_1a} -- \eqref{prob_relax_pen_1e}.
	
	Additionally, let $\bar{\Ybf}$ be an arbitrary member of $\{\Ybf\in\Fcal\,|\,\|\Ybf-\Ych\|_{\Frm}=d_{\Fcal}(\Ych)\}$. Due to optimality of $\Yst$, 
	we have:
	\begin{subequations}
		\begin{align}
			q_0(\Yst)+ &\eta\times d_{\Fcal}(\Ych)^2 \nonumber\\
			&\leq q_0(\Yst)+\eta\|\Yst-\Ych\|^2_{\Frm}\\
			&\leq q_0(\bar{\Ybf})+\eta\|\bar{\Ybf}-\Ych\|_{\Frm}^2\\
			&\leq q_0(\Ych) + \|\nabla q_0(\Ych)\|_{\Frm}\|\bar{\Ybf}-\Ych\|_{\Frm}
			+\big(\eta+\|\Abf_0\|\Tp_2\big)\|\bar{\Ybf}-\Ych\|_{\Frm}^2\!\!\!\!\\
			&= q_0(\Ych) + \|\nabla q_0(\Ych)\|_{\Frm} d_{\Fcal}(\Ych)+\big(\eta+\|\Abf_0\|\Tp_2\big)d_{\Fcal}(\Ych)^2\\
			&= \tilde{q}_0(\Ych) + \eta\times d_{\Fcal}(\Ych)^2,
		\end{align}
	\end{subequations}
	which concludes $q_0(\Yst)\leq \tilde{q}_0(\Ych)$. 
	\qed\end{proof}

\begin{proof}[Theorem \ref{thm_feas}] The proof is a consequence of Theorem \ref{thm_nfeas} with $d_{\Fcal}(\Ych)=0$.
	\qed\end{proof}

\begin{lemma}\label{lm:contin}
	Define $h_{\eta}:\Fcal\to\Fcal$ as the function mapping any initial point $\Ych$ of problem \eqref{aux_1a} -- \eqref{aux_1c} to its primal solution. Consider an arbitrary $\Ych\in\Fcal$. If the primal solution $\Yst$ is LICQ and the dual point $\taust\in\Rbb^{|\Bcal_{\Yst}|}$ satisfies:
	\begin{align}
		\eta\Ibf_{\! n}+\Abf\Tp_0+p_{\Bst}(\taust)\succ 0,\label{lm:con_asmp}
	\end{align}
	then, $h_{\eta}$ is continuous at point $\Ych$, where $\Bcal_{\Yst}\subseteq\Ecal\cup\Ical$ denotes the set of binding constraints for $\Yst$.
\end{lemma}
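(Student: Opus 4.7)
My plan is to derive the continuity of $h_\eta$ from a local sensitivity analysis of the auxiliary parametric program \eqref{aux_1a}--\eqref{aux_1c}, viewing $\Ych$ as the parameter and applying the implicit function theorem to the KKT system at the point $(\Yst,\taust)$.

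First I would restrict attention to the binding constraints. Since $\Yst$ satisfies LICQ and $\taust$ is the associated multiplier vector, the pair $(\Yst,\taust)$ solves the ``active-set'' KKT system
$G(\Ybf,\taubf;\Ych)=\boldsymbol{0}$ consisting of the stationarity equation
$\eta(\Ybf-\Ych)+\Abf_0\Ybf+\Bbf_0+\sum_{k\in\Bst}\tau_k(\Abf_k\Ybf+\Bbf_k)=\boldsymbol{0}_{n\times m}$, together with $q_k(\Ybf)=0$ for every $k\in\Bst$. Differentiating $G$ with respect to $(\Ybf,\taubf)$ at $(\Yst,\taust)$ yields a saddle-point matrix whose $(1,1)$-block equals $\bigl(\eta\Ibf_n+\Abf_0+p_{\Bst}(\taust)\bigr)\otimes\Ibf_m$ and whose off-diagonal block is $\Jbf_{\!\Bst}(\Yst)$. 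By hypothesis \eqref{lm:con_asmp}, the $(1,1)$-block is positive definite, and LICQ guarantees that $\Jbf_{\!\Bst}(\Yst)$ has full row rank, so a routine bordered-matrix argument shows that this Jacobian is nonsingular.

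The implicit function theorem then furnishes open neighborhoods $U\ni\Ych$ and $V\ni(\Yst,\taust)$, along with a $C^1$ map $\Ych'\mapsto(\hat{\Ybf}(\Ych'),\hat{\taubf}(\Ych'))$ from $U$ into $V$ that uniquely solves the active-set KKT system. Shrinking $U$ if necessary, continuity preserves the inactive strict inequalities $q_k(\hat{\Ybf}(\Ych'))<0$ for every $k\in\Ical\setminus\Bst$, as well as the sign of the multipliers indexed by $\Bst\cap\Ical$, so $(\hat{\Ybf}(\Ych'),\hat{\taubf}(\Ych'))$ is in fact a full KKT pair for problem \eqref{aux_1a}--\eqref{aux_1c} at $\Ych'$. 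Positive definiteness of the Lagrangian Hessian inherited from \eqref{lm:con_asmp} then upgrades $\hat{\Ybf}(\Ych')$ to a strict local minimum in a uniform neighborhood of $\Yst$.

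The main obstacle is promoting this local minimizer to the \emph{global} primal solution $h_\eta(\Ych')$, which is where the non-convexity of \eqref{aux_1a}--\eqref{aux_1c} bites. To handle it I would rely on a compactness plus Berge-type argument: the coercive penalty $\eta\|\Ybf-\Ych'\|_{\Frm}^2$ confines the global minimizers of the perturbed problem to a set that varies upper-hemicontinuously with $\Ych'$, so any sequence $h_\eta(\Ych_n)$ with $\Ych_n\to\Ych$ admits a convergent subsequence whose limit is feasible and attains the optimal value of the unperturbed problem. The strict second-order sufficient condition derived above makes $\Yst$ a locally isolated global minimizer, forcing this limit to coincide with $\Yst$ and hence $h_\eta(\Ych')=\hat{\Ybf}(\Ych')$ throughout a neighborhood of $\Ych$. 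Continuity of $h_\eta$ at $\Ych$ then follows directly from the $C^1$ regularity of $\hat{\Ybf}$.
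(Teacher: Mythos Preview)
Your approach is essentially the one the paper takes: write down the active-set KKT system, observe that its Jacobian is the saddle-point matrix
\[
\begin{bmatrix}
2\bigl(\eta\Ibf_n+\Abf_0+p_{\Bst}(\taust)\bigr)\otimes\Ibf_m & \Jbf_{\!\Bst}(\Yst)\T\\
\Jbf_{\!\Bst}(\Yst) & \boldsymbol{0}
\end{bmatrix},
\]
and conclude nonsingularity from LICQ plus assumption~\eqref{lm:con_asmp}. The paper's proof stops exactly there and declares continuity, leaving the implicit function theorem, the persistence of the inactive inequalities, and the local-versus-global identification entirely unstated.

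Your write-up is more careful than the paper's on all three of these points. In particular, you correctly flag the local-to-global promotion as the genuine analytic obstacle in a non-convex parametric program and sketch a Berge/compactness argument to close it; the paper simply does not address this. One caveat: your argument that the subsequential limit must equal $\Yst$ relies on $\Yst$ being the \emph{unique} global minimizer of \eqref{aux_1a}--\eqref{aux_1c} at $\Ych$ (strict local isolation alone does not rule out another, distant global minimizer). That uniqueness is implicit in the very definition of $h_\eta$ as a single-valued map, so you are entitled to use it, but it would be cleaner to state it as a standing hypothesis rather than derive it from the second-order condition.
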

\begin{proof}
	Due to LICQ regularity of $\Yst$, the KKT stationarity and primal feasibility conditions are satisfied for $(\Yst,\taust)$:
	\begin{subequations}
		\begin{align}
			\left(\eta\Ibf_n+\Abf\Tp_0+p\Tp_{\Bst}(\taust)\right)\Yst+\Bbf\Tp_0
			+{\sum}_{k\in\Bcal_{\Yst}}\;\accentset{\ast}{\tau}_k\Bbf_k\Tp&=\eta\Ych,\\
			q_k(\Yst)&=0, \qquad\qquad k\in\Bcal_{\Yst}.
		\end{align}
	\end{subequations}
	The Jacobian matrix of the above equations is:
	\begin{align}
		\begin{bmatrix}
			2\!\left(\eta\Ibf_{\! n}+\Abf\Tp_0+p_{\Bst}(\taust)\right) \otimes \Ibf_{\! m} & \Jbf_{\!\Bst}(\Yst)\\
			\Jbf_{\!\Bst}(\Yst)\T & \boldsymbol{0}_{|\Bst|\times|\Bst|}
		\end{bmatrix},
	\end{align}
	which is non-singular due to non-singularity $\Jbf_{\!\Bst}(\Yst)$ and assumption \eqref{lm:con_asmp}.
	\qed\end{proof}

\begin{proof}[Theorem \ref{thm_conv}] 
	Let $\big\{\Ybf^{(l)}\big\}^{\infty}_{l=0}$ denote the sequence generated by Algorithm \ref{alg:1}. Assume by induction that $\Ybf^{(l)}\in\Fch$ and let $\Yst$ be an optimal solution for problem \eqref{aux_1a} -- \eqref{aux_1c} for $\Ych=\Ybf^{(l)}$. According to the optimality of $\Yst$ and feasibility of $\Ybf^{(l)}$:
	\begin{align}
		q_0(\Yst)+\eta\|\Yst-\Ybf^{(l)}\|^2_{\Frm}\leq q_0(\Ybf^{(l)})+\eta\|\Ybf^{(l)}-\Ybf^{(l)}\|^2_{\Frm},\label{decr}
	\end{align}
	which concludes $q_0(\Yst)\leq q_0(\Ybf^{(l)})$, and therefore $\Yst\in\Fch$.
	
	Due to the assumption $\min_{\Ybf\in\Fch}\{s(\Ybf)\}>0$, the point $\Yst$ is LICQ. As a result, there exists a vector of dual variables $\taust\in\Rbb^{|\Bcal_{\Yst}|}$ corresponding to $\Yst$, where $\Bcal_{\Yst}\subseteq\Ecal\cup\Ical$ is the set of binding constraints for $\Yst$. Now, according to Lemmas \ref{lm:d_bound} and \ref{lm:tau_bound}, as well as the assumption \eqref{asmp_conv}, we have:
	\begin{subequations}
		\begin{align}
			&\frac{\rho_1\times \|\eta^{-1}\taust\|_{2}}{1-\eta^{-1}\|\Abf_0\|\Tp_1}
			\leq\frac{\rho_1}{1-\eta^{-1}\|\Abf_0\|\Tp_1}
			\times\frac{2\|\Yst-\Ybf^{(l)}\|_{\Frm}
				+\eta^{-1}\|\nabla q_0(\Yst)\|_{\Frm}}{s(\Yst)}\\
			&\leq\frac{\rho_1}{1-\eta^{-1}\|\Abf_0\|\Tp_1}
			\times\frac{2\times\frac{\eta^{-1}\|\nabla q_0(\Ybf^{(l)})\|_{\Frm}}{1-\eta^{-1}\|\Abf_0\|\Tp_2}
				+\eta^{-1}\|\nabla q_0(\Yst)\|_{\Frm}}{s(\Yst)}\\
			&\leq\frac{\rho_1\,\eta^{-1}\big(2\|\nabla q_0(\Ybf^{(l)})\|_{\Frm}+(1-\eta^{-1}\|\Abf_0\|\Tp_2)\|\nabla q_0(\Yst)\|_{\Frm}\big)}{\big(1-\eta^{-1}\|\Abf_0\|\Tp_1\big)\big(1-\eta^{-1}\|\Abf_0\|\Tp_2\big)s(\Yst)}\\
			&\leq\frac{\rho_1\,\eta^{-1}\big(2\|\nabla q_0(\Ybf^{(l)})\|_{\Frm}+(1-\eta^{-1}\|\Abf_0\|\Tp_2)\|\nabla q_0(\Yst)\|_{\Frm}\big)}{\big(1-\eta^{-1}\|\Abf_0\|\Tp_1-\eta^{-1}\|\Abf_0\|\Tp_2\big)s(\Yst)}\\
			&\leq\frac{\eta^{-1}\rho_1}{1-\eta^{-1}\|\Abf_0\|\Tp_1-\eta^{-1}\|\Abf_0\|\Tp_2}
			\times\frac{2\|\nabla q_0(\Ybf^{(l)})\|_{\Frm}+\|\nabla q_0(\Yst)\|_{\Frm}}{s(\Yst)}\\
			&\leq\frac{3\rho_1}{\eta-\|\Abf_0\|\Tp_1-\|\Abf_0\|\Tp_2}
			\times\frac{\max_{\Ybf\in\Fch}\{\|\nabla q_0(\Ybf)\|_{\Frm}\}}
			{\min_{\Ybf\in\Fch}\{s(\Ybf)\}}\leq 1.
		\end{align}
	\end{subequations}
	Hence:
	\begin{align}
		\eta-\|\Abf_0\|_1\Tp-\rho_1\times \|\taust\|_{2} > 0,
	\end{align}
	which implies that the matrix $\eta\Ibf_n+\Abf_0+p_{\Bcal_{\Yst}}(\taust)$ is diagonally-dominant and according to Lemma \ref{lm:exact}, the point $(\Yst,\Yst\Yst\T)$ is the unique optimal solution for problem \eqref{prob_relax_pen_1a} -- \eqref{prob_relax_pen_1e}, which implies that $\Ybf^{(l+1)}=\Yst$.
	
	On the other hand, according to \eqref{decr}, the sequence $\big\{q_0(\Ybf^{(l)})\big\}^{\infty}_{l=0}$ is non-increasing and convergent. Additionally, we have:
	\begin{align}
		\|\Ybf^{(l+1)}-\Ybf^{(l)}\|^2_{\Frm}\leq \eta^{-1}\left(q_0(\Ybf^{(l)}) - q_0(\Ybf^{(l+1)})\right),
	\end{align}
	which means that $\big\{\Ybf^{(l)}\big\}^{\infty}_{l=0}$ is convergent to a point $\Ybf^{(\infty)}\in\Fch$ due to compactness of $\Fch$.
	
	According to Lemma \eqref{lm:contin}, we have $h_{\eta}\big(\Ybf^{(\infty)}\big)=\Ybf^{(\infty)}$ (See Lemma \ref{lm:contin} for the definition of $h_{\eta}$). Now, due to LICQ regularity of $\Ybf^{(\infty)}$, there exists a vector of dual variables $\taubf^{(\infty)}\in\Rbb^{|\Bcal^{(\infty)}|}$ for which the following KKT stationarity and primal feasibility conditions are satisfied:
	\begin{subequations}
		\begin{align}
			\left(\Abf\Tp_0+p\Tp_{\Bst}\big(\taubf^{(\infty)}\big)\right)\Ybf^{(\infty)}+\Bbf\Tp_0
			+{\sum}_{k\in\Bcal_{\Ybf^{(\infty)}}}\;\tau^{(\infty)}_k\Bbf_k\Tp&=\mathbf{0}_{n\times m},\label{kkt_local1}\\
			q_k\big(\Ybf^{(\infty)}\big)&=0, \quad k\in\Bcal_{\Ybf^{(\infty)}},\label{kkt_local2}
		\end{align}
	\end{subequations}
	where $\Bcal^{(\infty)}$ denotes the set of binding constraints for $\Ybf^{(\infty)}$. Finally, it can be easily observed that \eqref{kkt_local1} -- \eqref{kkt_local2} are equivalent to KKT optimality conditions for the original non-convex problem \eqref{QCQP_tensor_a} -- \eqref{QCQP_tensor_c} which completes the proof.
	\qed\end{proof}

\section{Computational Results}

In this section, three case studies are given to demonstrate some possible applications and to evaluate the merits of the proposed parabolic relaxation. In Section \ref{sec_QPLIB}, we compute lower bounds offered by the parabolic relaxation for benchmark cases from the library of quadratic programming instances (QPLIB) \cite{FuriniEtAl2017TR}, and compare them to that of SOCP relaxation. In Section \ref{sec_penalty}, we evaluate the ability of penalized parabolic relaxation in finding feasible points for non-convex QCQPs with continuous variables. In Section \ref{sec_sysid}, we discuss the problem of identifying linear dynamical systems based on limited snapshots from sample trajectories.
All of the experiments are performed on a desktop computer with a 12-core 3.0GHz CPU and 256GB RAM. MOSEK v8.1 \cite{mosek} is used through MATLAB 2017a to solve the resulting convex relaxations.

\subsection{Bounds for Non-convex QCQP}\label{sec_QPLIB}

\begin{table}[t!]
	\centering
	\caption{
		Comparison of lower bounds from the SOCP and the parabolic relaxations for QPLIB instances.}
	\scalebox{0.75}{
		\begin{tabular}{ c|c c c c|c c c|c c c }
			\hline\hline
			\multirow{ 2}{*}{Inst}  & Total & Quad & Total & Optimal &
			\multicolumn{3}{c|}{SOCP} & \multicolumn{3}{c}{Parabolic} \\
			\cline{6-11}
			& Var & Cons & Cons &  Cost 
			& LB & GAP(\%) & $t(s)$ 
			& LB & GAP(\%)& $t(s)$\\
			\hline\hline
			0018 & 50 & 0 & 1 & -6.386 & - & - & - & - & - & - \\
			0343 & 50 & 0 & 1 & -6.386 & \bf -226.334 & \bf 3444.22 & 1.56 & -234.706 & 3575.32 & 0.63 \\
			0911 & 50 & 50 & 50 & -32.148 & -299.520 & 831.71 & 0.63 & \bf -76.525 & \bf 138.04 & 0.58 \\
			0975 & 50 & 10 & 10 & -37.854 & -295.158 & 679.74 & 0.63 & \bf -78.384 & \bf 107.07 & 0.56 \\
			1055 & 40 & 20 & 20 & -33.037 & -393.235 & 1090.29 & 0.55 & \bf -94.630 & \bf 186.44 & 0.59 \\
			1143 & 40 & 20 & 24 & -57.247 & -600.990 & 949.82 & 0.61 & \bf -179.463 & \bf 213.49 & 0.63 \\
			1157 & 40 & 1 & 9 & -10.948 & -37.239 & 240.14 & 0.72 & \bf -20.431 & \bf 86.61 & 0.61 \\
			1353 & 50 & 1 & 6 & -7.714 & -73.476 & 852.49 & 0.64 & \bf -23.557 & \bf 205.37 & 0.63 \\
			1423 & 40 & 20 & 24 & -14.967 & -92.610 & 518.74 & 0.56 & \bf -31.901 & \bf 113.14 & 0.58 \\
			1437 & 50 & 1 & 11 & -7.789 & -67.007 & 760.25 & 0.75 & \bf -28.147 & \bf 261.36 & 0.64 \\
			1451 & 60 & 60 & 66 & -87.576 & -793.469 & 806.03 & 0.70 & \bf -227.164 & \bf 159.39 & 0.70 \\
			1493 & 40 & 1 & 5 & -43.160 & -407.538 & 844.24 & 0.58 & \bf -142.863 & \bf 231.01 & 0.56 \\
			1507 & 30 & 30 & 33 & -8.301 & -55.646 & 570.32 & 0.59 & \bf -16.726 & \bf 101.49 & 0.56 \\
			1535 & 60 & 60 & 66 & -11.586 & -122.272 & 955.33 & 0.61 & \bf -40.866 & \bf 252.71 & 0.66 \\
			1619 & 50 & 25 & 30 & -9.217 & -135.057 & 1365.26 & 0.67 & \bf -31.556 & \bf 242.35 & 0.55 \\
			1661 & 60 & 1 & 13 & -15.955 & -114.224 & 615.92 & 0.70 & \bf -46.364 & \bf 190.60 & 0.66 \\
			1675 & 60 & 1 & 13 & -75.669 & -670.112 & 785.59 & 0.66 & \bf -198.723 & \bf 162.62 & 0.63 \\
			1703 & 60 & 30 & 36 & -132.802 & -1197.274 & 801.55 & 0.66 & \bf -411.862 & \bf 210.13 & 0.67 \\
			1745 & 50 & 50 & 55 & -72.377 & -378.953 & 423.58 & 0.61 & \bf -138.833 & \bf 91.82 & 0.61 \\
			1773 & 60 & 1 & 7 & -14.642 & -169.958 & 1060.77 & 0.64 & \bf -49.329 & \bf 236.90 & 0.69 \\
			1886 & 50 & 50 & 50 & -78.672 & -627.305 & 697.37 & 0.72 & \bf -163.551 & \bf 107.89 & 0.67 \\
			1913 & 48 & 48 & 48 & -52.108 & -289.660 & 455.88 & 0.67 & \bf -82.897 & \bf 59.09 & 0.63 \\
			1922 & 30 & 60 & 60 & -35.951 & -216.084 & 501.06 & 0.63 & \bf -62.914 & \bf 75.00 & 0.69 \\
			1931 & 40 & 40 & 40 & -55.709 & -390.254 & 600.52 & 0.66 & \bf -103.182 & \bf 85.22 & 0.59 \\
			1940 & 48 & 96 & 96 & -38.310 & -283.950 & 641.19 & 0.70 & \bf -69.374 & \bf 81.08 & 0.61 \\
			1967 & 50 & 75 & 75 & -107.581 & -1214.746 & 1029.14 & 0.73 & \bf -306.963 & \bf 185.33 & 0.67 \\
			\hline\hline
	\end{tabular}}
	\label{tab:cases}
\end{table}

This case study is concerned with assessing the tightness of parabolic relaxation in comparison with SOCP relaxation. To this end, we perform experiments on the non-convex QCQP problems from QPLIB. The size, number of constraints, and optimal cost for each instance are reported in Table \ref{tab:cases}. The following valid inequalities are imposed on all convex relaxations:
\begin{subequations}
	\begin{align}
		&X_{kk}-(x^{\mathrm{lb}\!\!\!\!\!\phantom{\mathrm{ub}}}_k+x^{\mathrm{ub}\!\!\!\!\!\phantom{\mathrm{lb}}}_k)x_k 
		+x^{\mathrm{lb}\!\!\!\!\!\phantom{\mathrm{ub}}}_k x^{\mathrm{ub}\!\!\!\!\!\phantom{\mathrm{lb}}}_k \leq 0,&\qquad\forall k\in\Ncal,\label{lbub}\\
		&X_{kk}-(x^{\mathrm{ub}\!\!\!\!\!\phantom{\mathrm{lb}}}_k+x^{\mathrm{ub}\!\!\!\!\!\phantom{\mathrm{lb}}}_k)x_k 
		+x^{\mathrm{ub}\!\!\!\!\!\phantom{\mathrm{lb}}}_k x^{\mathrm{ub}\!\!\!\!\!\phantom{\mathrm{lb}}}_k \geq 0,&\qquad\forall k\in\Ncal,\label{ubub}\\
		&X_{kk}-(x^{\mathrm{lb}\!\!\!\!\!\phantom{\mathrm{ub}}}_k+x^{\mathrm{lb}\!\!\!\!\!\phantom{\mathrm{ub}}}_k)x_k 
		+x^{\mathrm{lb}\!\!\!\!\!\phantom{\mathrm{ub}}}_k x^{\mathrm{lb}\!\!\!\!\!\phantom{\mathrm{ub}}}_k \geq 0,&\qquad\forall k\in\Ncal,\label{lblb}
	\end{align}
\end{subequations}
where $\boldsymbol{x}^{\mathrm{lb}},\boldsymbol{x}^{\mathrm{ub}}\in\mathbb{R}^n$ are given lower and upper bounds on $\boldsymbol{x}$ and $\Ncal\triangleq\{1,\ldots,n\}$. Lower bounds from parabolic and SOCP relaxations and their gaps from the optimality are reported in Table \ref{tab:cases} where:
\begin{align}
	\mathrm{GAP}(\%)&\triangleq 100\times\frac{\qbf_0(\boldsymbol{x}^{\mathrm{QPLIB}})-\qbf_{0}(\xst,\Xst)}{|\qbf_0(\boldsymbol{x}^{\mathrm{QPLIB}})|},\!\!\!\label{gapgap}
\end{align}
and $\xbf^{\mathrm{QPLIB}}$ is the optimal point provided by QPLIB, and $(\xst,\Xst)$ represent the outcome of convex relaxation. As shown by the table, for all of the cases except 0018 and 0343, the parabolic relaxation outperforms SOCP by an average of $572\%$. For the case 0018, both relaxations are unbounded.

\subsection{Feasible Point Recovery}\label{sec_penalty}

\begin{table}[t!]
	\centering
	\caption{
		Comparison between feasible points offered by penalized parabolic relaxations and GUROBI 9.0.}
	\scalebox{0.75}{
		\begin{tabular}{ c c | c c c c c c|c c }
			\hline\hline
			\multirow{ 2}{*}{Inst}  & Optimal & 
			\multicolumn{6}{c|}{Penalized Parabolic} & \multicolumn{2}{c}{GUROBI} \\
			\cline{3-10}
			& Cost & $\eta$ & ${i^{\mathrm{feas}}}^{\phantom{1}}$ &  $i^{\mathrm{stop}}$  & $t^{\mathrm{stop}}(s)$ & UB & GAP(\%)
			& UB & GAP(\%) \\
			\hline\hline
			0018 & -6.386 & 5e+2 & 1 & 286 & 120.19 & \bf -6.377 & \bf 0.13 &   -5.939 &   7.00 \\
			0343 & -6.386 & 5e+2 & 1 & 322 & 129.65 & \bf -6.031 & \bf 5.56 &   -5.939 &   7.00 \\
			0911 & -32.148 & 1e+1 & 4 & 63 & 27.99 & \bf -31.713 & \bf 1.35 &   0.000 &   100.00 \\
			0975 & -37.854 & 1e+1 & 1 & 31 & 13.04 & \bf -36.433 & \bf 3.75 &   -19.643 &   48.11 \\
			1055 & -33.037 & 2e+1 & 1 & 65 & 6.65 & \bf -32.775 & \bf 0.79 &   -6.496 &   80.34 \\
			1143 & -57.247 & 5e+1 & 1 & 143 & 57.23 & \bf -55.511 & \bf 3.03 &   -46.370 &   19.00 \\
			1157 & -10.948 & 5e+0 & 1 & 25 & 3.01 & \bf -10.942 & \bf 0.06 &   Inf &   Inf \\
			1353 & -7.714 & 5e+0 & 1 & 52 & 22.61 & \bf -7.708 & \bf 0.08 &   -7.357 &   4.63 \\
			1423 & -14.967 & 5e+0 & 1 & 26 & 10.49 & \bf -14.687 & \bf 1.87 &   Inf &   Inf \\
			1437 & -7.789 & 5e+0 & 1 & 37 & 16.38 & \bf -7.787 & \bf 0.03 &   Inf &   Inf \\
			1451 & -87.576 & 5e+1 & 2 & 142 & 62.92 & \bf -87.357 & \bf 0.25 &   Inf &   Inf \\
			1493 & -43.160 & 5e+1 & 1 & 58 & 22.57 & \bf -41.831 & \bf 3.08 &   -40.379 &   6.44 \\
			1507 & -8.301 & 5e+0 & 1 & 41 & 17.50 & \bf -8.292 & \bf 0.11 &   -5.104 &   38.52 \\
			1535 & -11.586 & 5e+0 & 2 & 123 & 68.76 & \bf -11.521 & \bf 0.56 &   Inf &   Inf \\
			1619 & -9.217 & 5e+0 & 1 & 42 & 17.36 & \bf -9.212 & \bf 0.06 &   Inf &   Inf \\
			1661 & -15.955 & 5e+0 & 1 & 38 & 16.47 & \bf -15.664 & \bf 1.83 &   Inf &   Inf \\
			1675 & -75.669 & 2e+1 & 1 & 23 & 9.65 & \bf -75.539 & \bf 0.17 &   Inf &   Inf \\
			1703 & -132.802 & 5e+1 & 2 & 54 & 22.23 & \bf -132.474 & \bf 0.25 &   Inf &   Inf \\
			1745 & -72.377 & 2e+1 & 1 & 33 & 14.03 & \bf -71.805 & \bf 0.79 &   Inf &   Inf \\
			1773 & -14.642 & 5e+0 & 1 & 72 & 29.75 & \bf -14.177 & \bf 3.18 &   Inf &   Inf \\
			1886 & -78.672 & 2e+1 & 1 & 33 & 13.27 & \bf -78.601 & \bf 0.09 &   -77.185 &   1.89 \\
			1913 & -52.108 & 1e+1 & 9 & 27 & 11.86 & \bf -51.888 & \bf 0.42 &   -50.847 &   2.42 \\
			1922 & -35.951 & 1e+1 & 2 & 25 & 10.21 &   -35.448 &   1.40 & \bf -35.561 & \bf 1.08 \\
			1931 & -55.709 & 2e+1 & 1 & 66 & 27.54 &   -54.290 &   2.55 & \bf -55.345 & \bf 0.65 \\
			1940 & -38.310 & 2e+1 & 1 & 55 & 24.08 & \bf -38.264 & \bf 0.12 &   -37.426 &   2.31 \\
			1967 & -107.581 & 5e+1 & 2 & 75 & 32.86 & \bf -106.861 & \bf 0.67 &   0.000 &   100.00 \\
			\hline\hline
	\end{tabular}}
	\label{tab:cases_penalty}
\end{table}

This case study is concerned with the recovery of feasible points from inexact parabolic relaxations using Algorithm \ref{alg:1}. Table \ref{tab:cases_penalty} demonstrates a comparison between this approach and GUROBI 9.0. Let $(\accentset{\ast}{\boldsymbol{x}},\accentset{\ast}{\boldsymbol{X}})$ denote the optimal solution of the convex relaxation \eqref{prob_relax_pen_1a} -- \eqref{prob_relax_pen_1e}. We use the point $\check{\boldsymbol{x}} = \accentset{\ast}{\boldsymbol{x}}$ as the initial point of the algorithm.

The penalty parameter $\eta$ is chosen via bisection as the smallest number of the form $\alpha\times10^\beta$, which results in a $\accentset{\ast}{\boldsymbol{X}}=\accentset{\ast}{\boldsymbol{x}}\accentset{\ast}{\boldsymbol{x}}^{\top}$ during the first ten rounds, where $\alpha\in\{1,2,5\}$ and $\beta$ is an integer. In all of the experiments, the value of $\eta$ remained static throughout Algorithm \ref{alg:1}. Denote the sequence of penalized SDP solutions obtained by Algorithm \ref{alg:1} as:
\begin{align}
	(\boldsymbol{x}^{(1)},\boldsymbol{X}^{(1)}), \;\;
	(\boldsymbol{x}^{(2)},\boldsymbol{X}^{(2)}), \;\;
	(\boldsymbol{x}^{(3)},\boldsymbol{X}^{(3)}), \;\;\ldots.\nonumber
\end{align}
The smallest $i$ such that:
\begin{align}
	\mathrm{tr}\{\boldsymbol{X}^{(i)}-\boldsymbol{x}^{(i)}(\boldsymbol{x}^{(i)})^{\top}\}<10^{-7},
\end{align}
is denoted by $i^{\mathrm{feas}}$, i.e., it is the number of rounds that Algorithm \ref{alg:1} needs to attain a tight penalization. 
Moreover, the smallest $i$ such that:
\begin{align}
	\frac{q_0(\boldsymbol{x}^{(i-1)})-q_0(\boldsymbol{x}^{(i)})}{|q_0(\boldsymbol{x}^{(i)})|}\leq 10^{-4},\label{scri}
\end{align}
is denoted by $i^{\mathrm{stop}}$, and $\mathrm{UB}\triangleq q_0(\boldsymbol{x}^{(i^{\mathrm{stop}})})$. The following formula is used to calculate the final percentage gaps from the optimal costs reported by the QPLIB library:
\begin{align}
	\mathrm{GAP}(\%)&=100\times\frac{q_{0}^{\mathrm{stop}}-
		q_0(\boldsymbol{x}^{\mathrm{QPLIB}})}{|q_0(\boldsymbol{x}^{\mathrm{QPLIB}})|}.\!\!\!\label{gapgap2}
\end{align}
{Moreover, $t$(s) denotes the cumulative solver time in seconds for the $i^{\mathrm{stop}}$ rounds. Our results are compared with GUROBI 9.0 by fixing the maximum solver time equal to the cumulative solver time spent by Algorithm \ref{alg:1}.} The resulting lower bounds, upper bounds and GAPs (from equation \eqref{gapgap2}) are reported in Table \ref{tab:cases_penalty}.

As demonstrated by Figures \ref{fig_good1} and \ref{fig_good2}, Algorithm \ref{alg:1} outperforms GUROBI for the majority of cases. However, for cases 1922, 1931, 0975, and 1423, GUROBI ultimately outperforms Algorithm \ref{alg:1} and this is shown by Figure \ref{fig_bad}.

\begin{figure*}[h!]
	\centering
	\includegraphics[height=0.24\textwidth]{./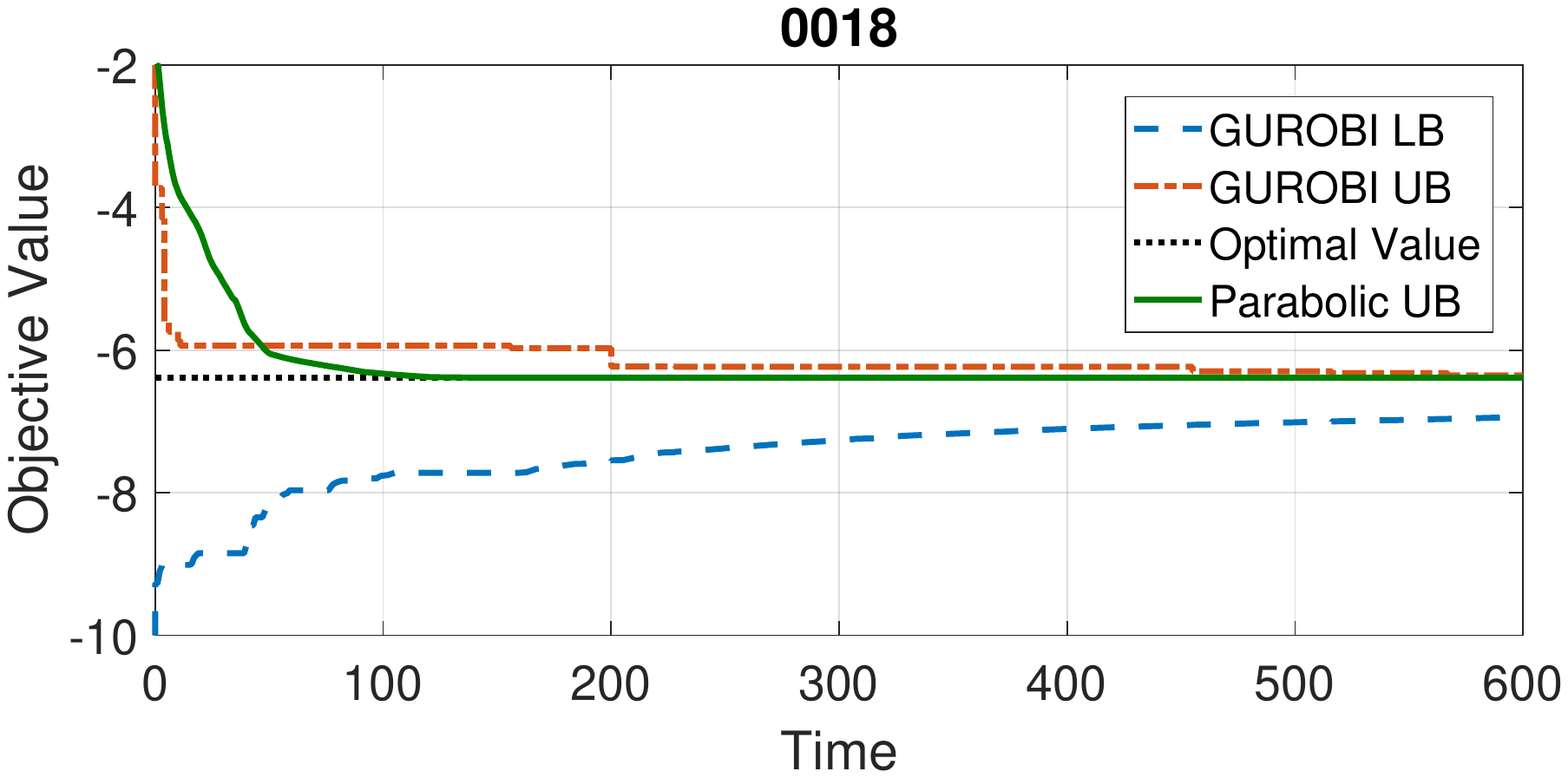}
	\includegraphics[height=0.24\textwidth]{./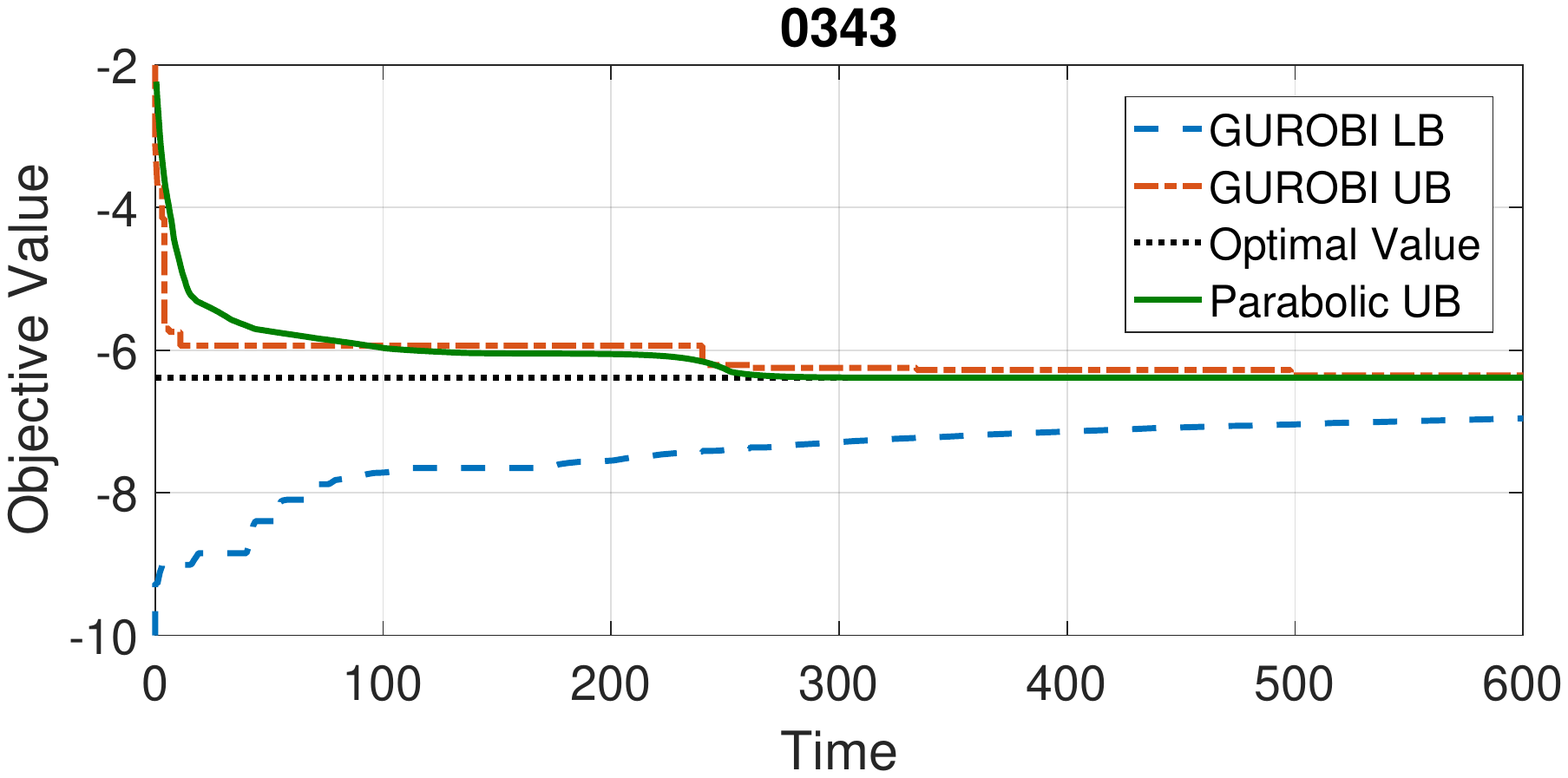}\\
	\vspace{1mm}
	\includegraphics[height=0.24\textwidth]{./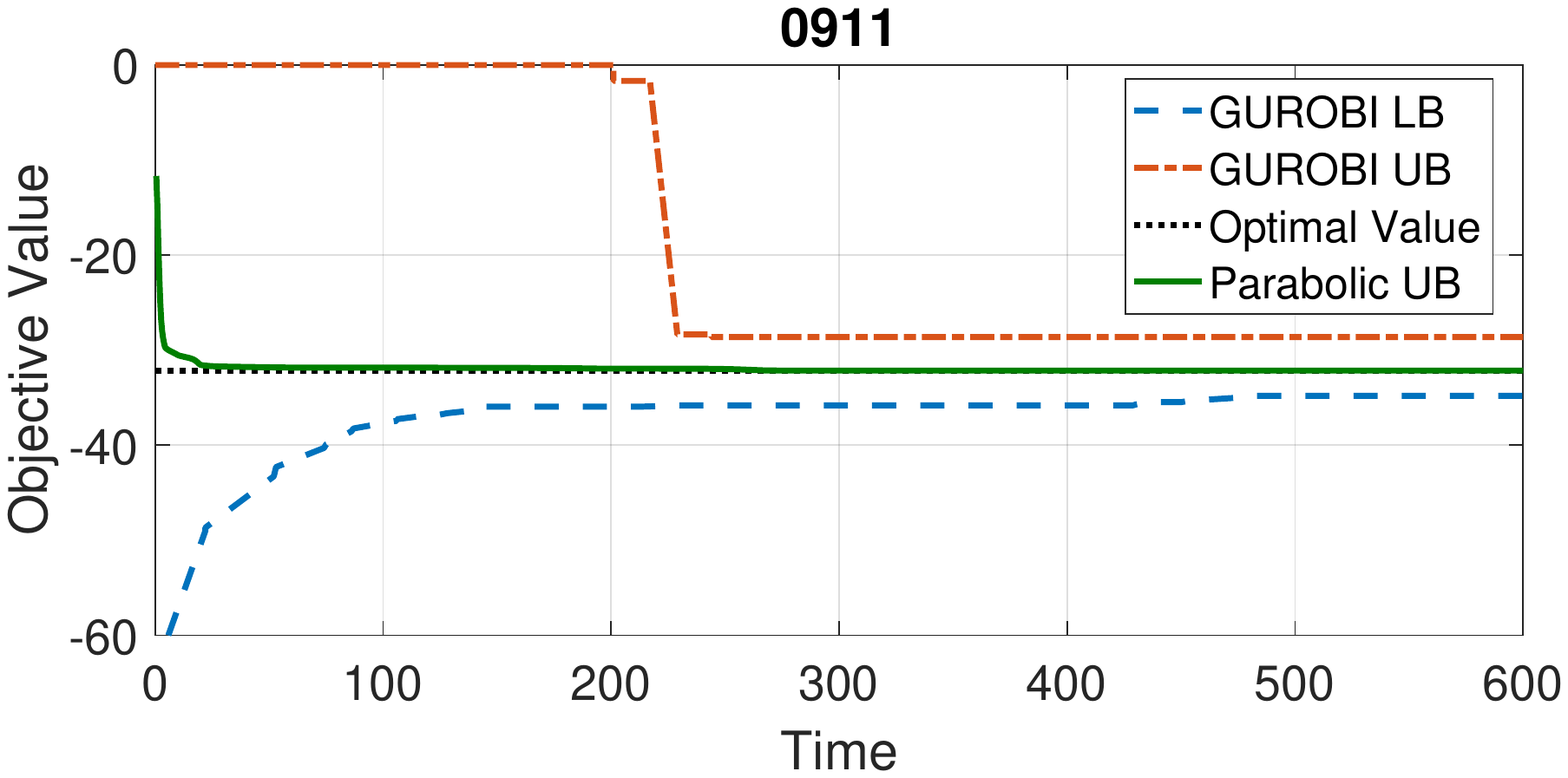}
	\includegraphics[height=0.24\textwidth]{./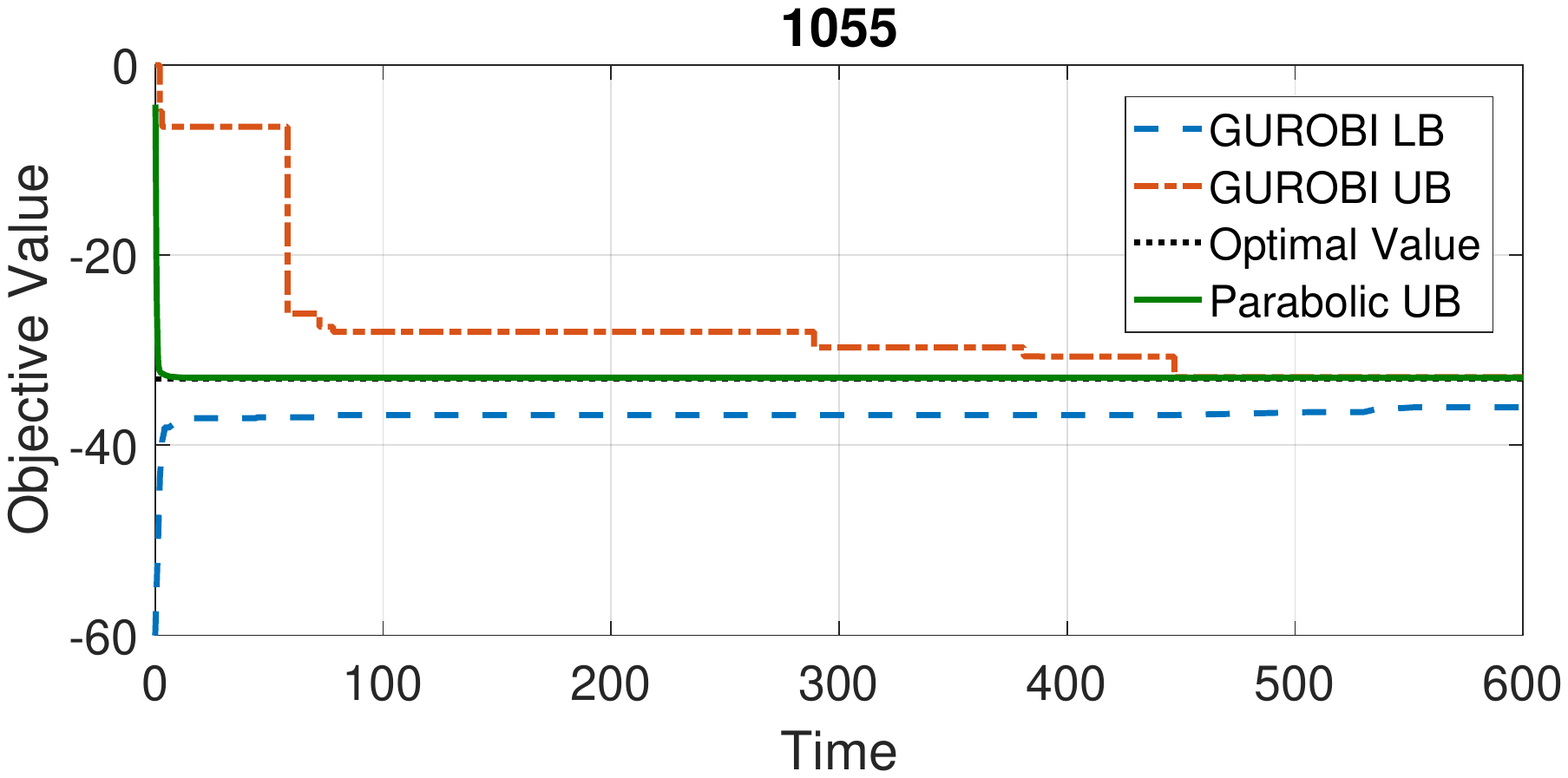}\\
	\vspace{1mm}
	\includegraphics[height=0.24\textwidth]{./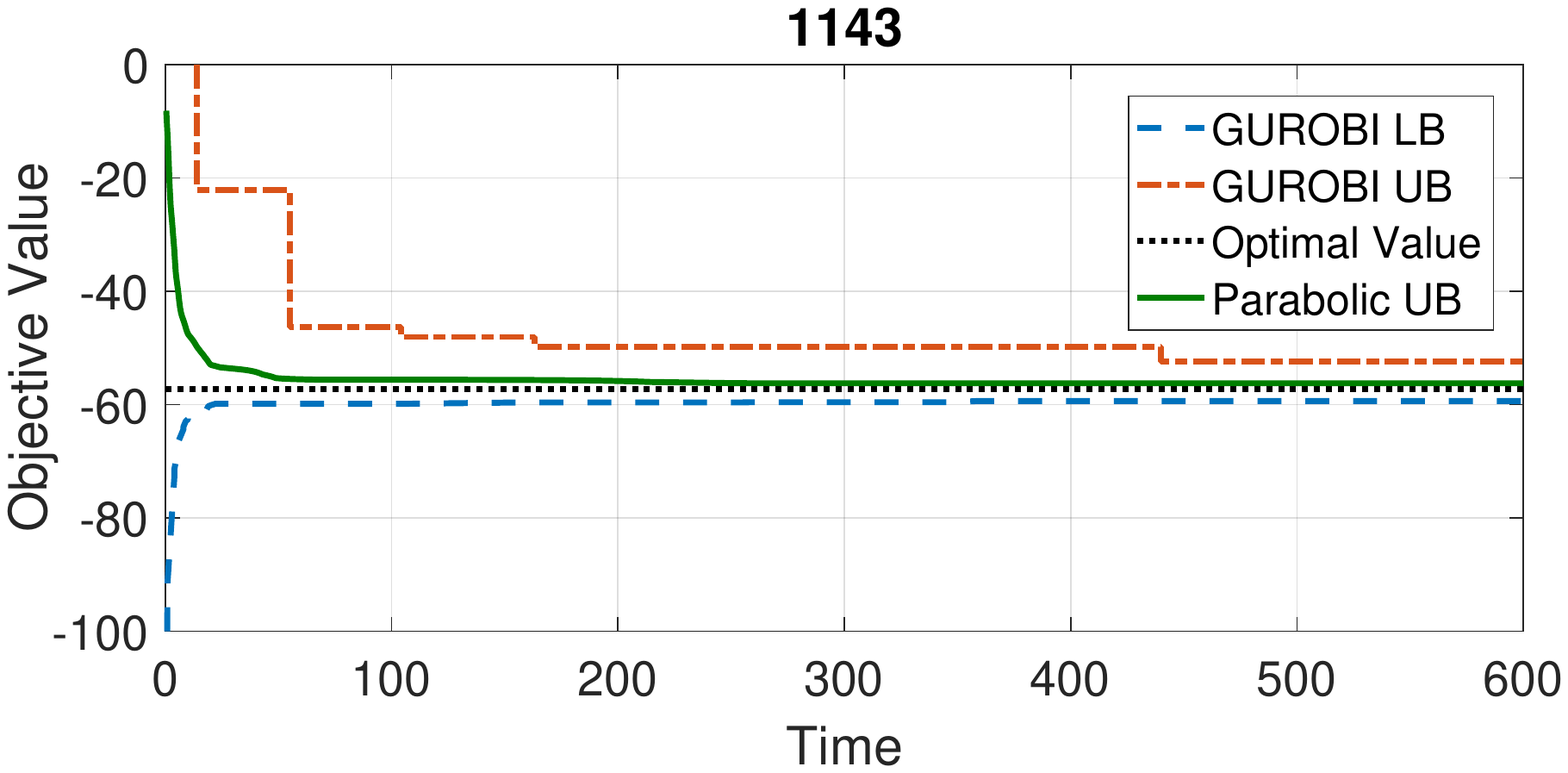}
	\includegraphics[height=0.24\textwidth]{./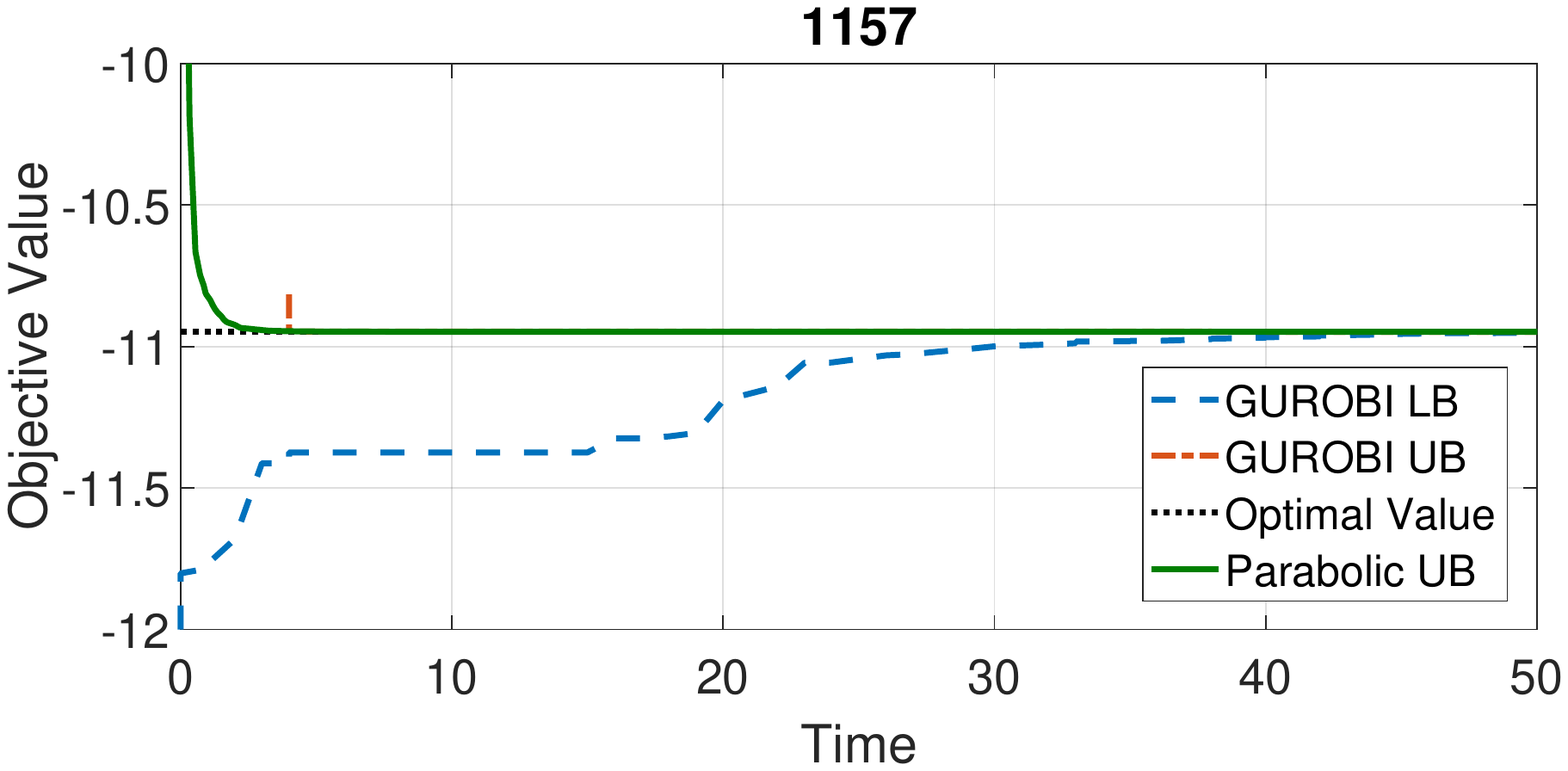}\\
	\vspace{1mm}
	\includegraphics[height=0.24\textwidth]{./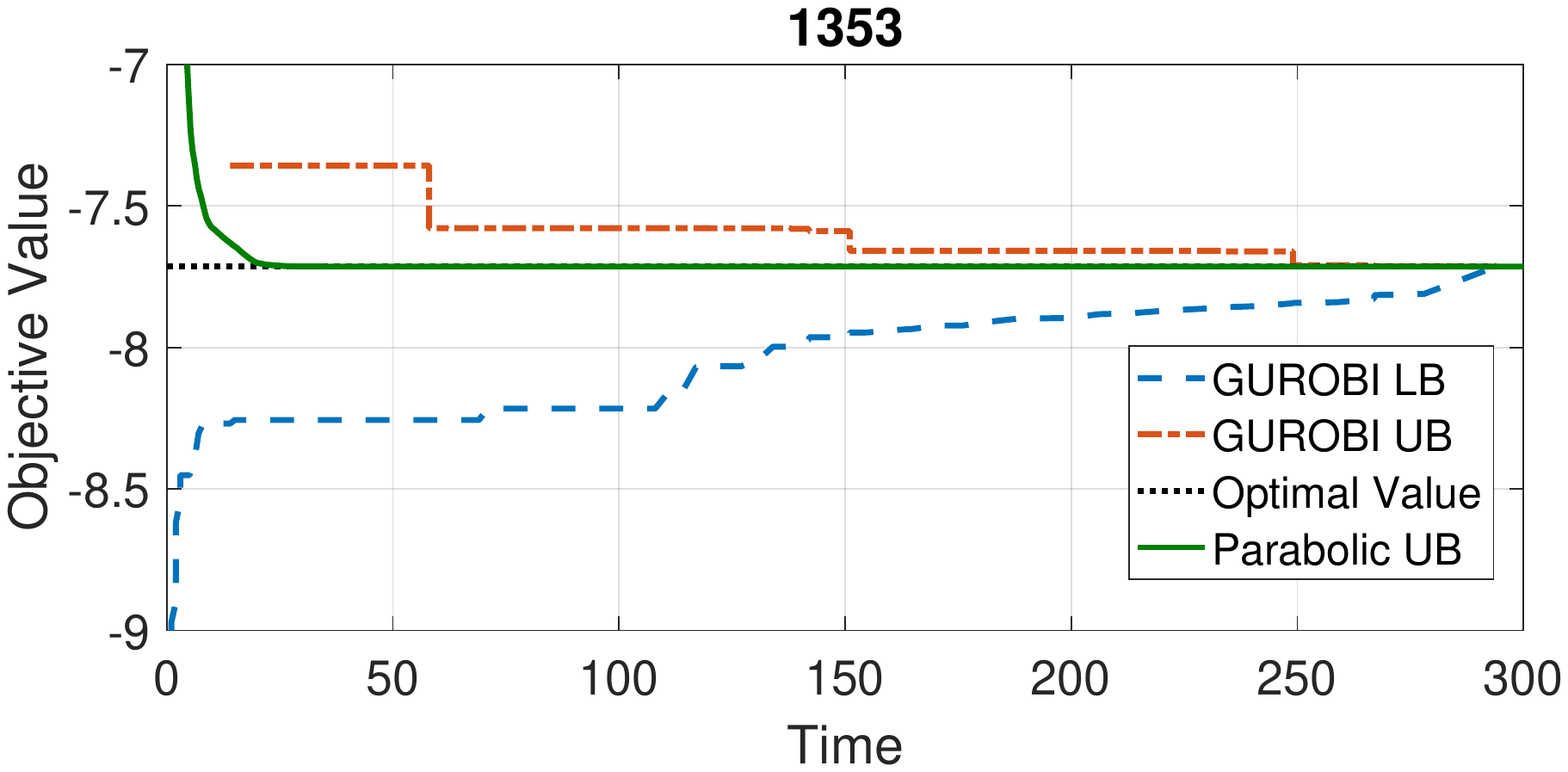}
	\includegraphics[height=0.24\textwidth]{./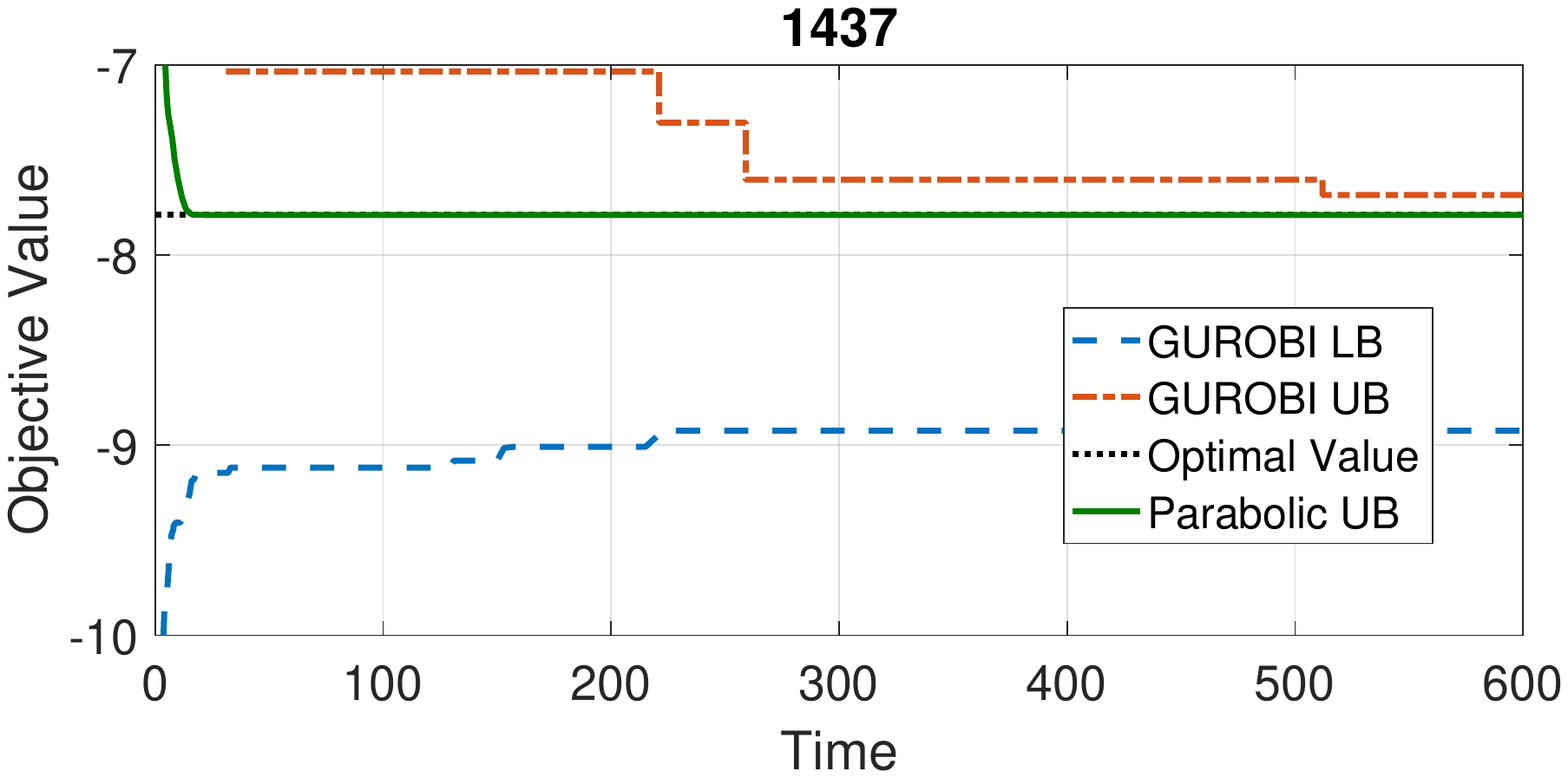}\\
	\vspace{1mm}
	\includegraphics[height=0.24\textwidth]{./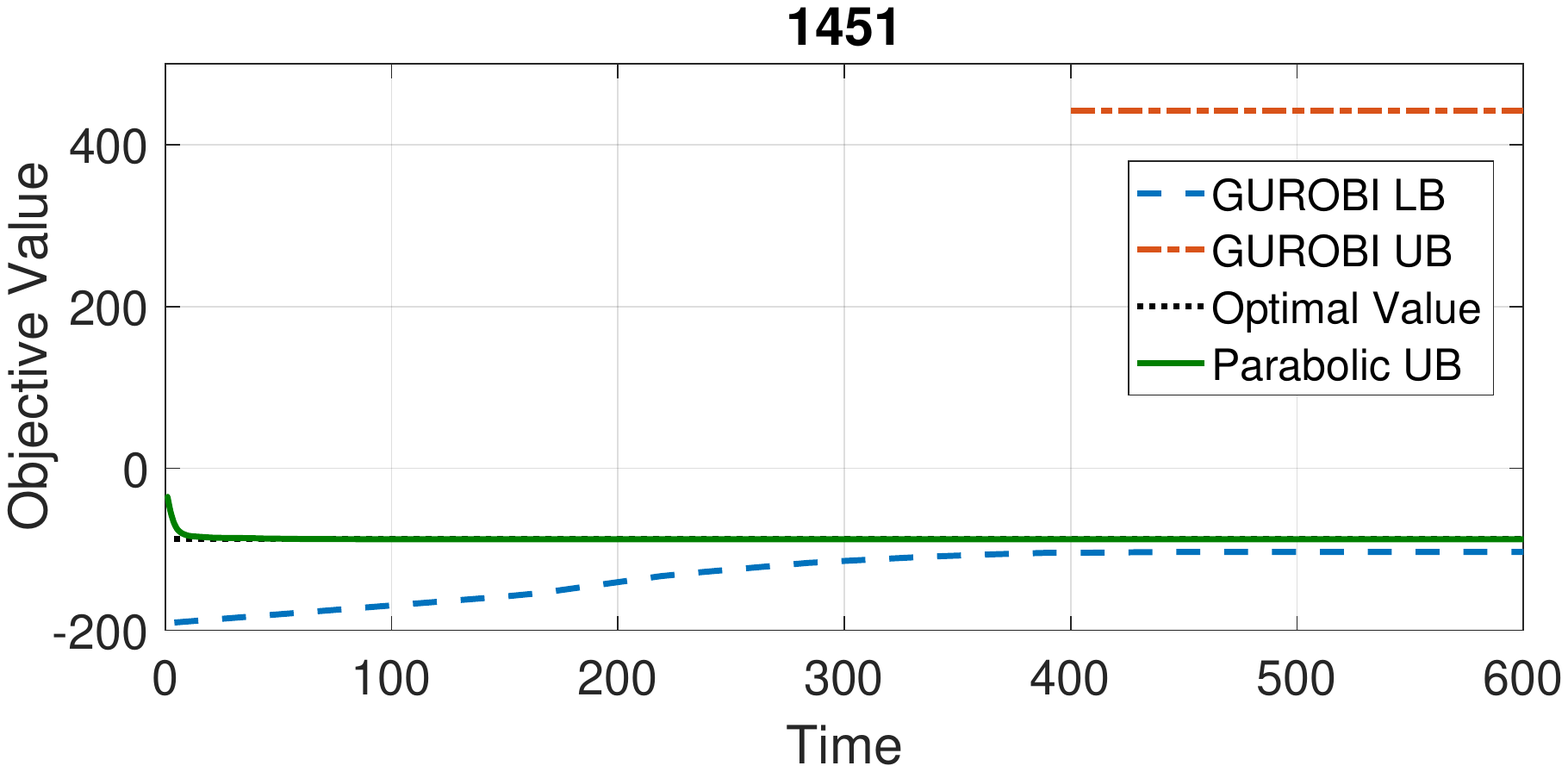}
	\includegraphics[height=0.24\textwidth]{./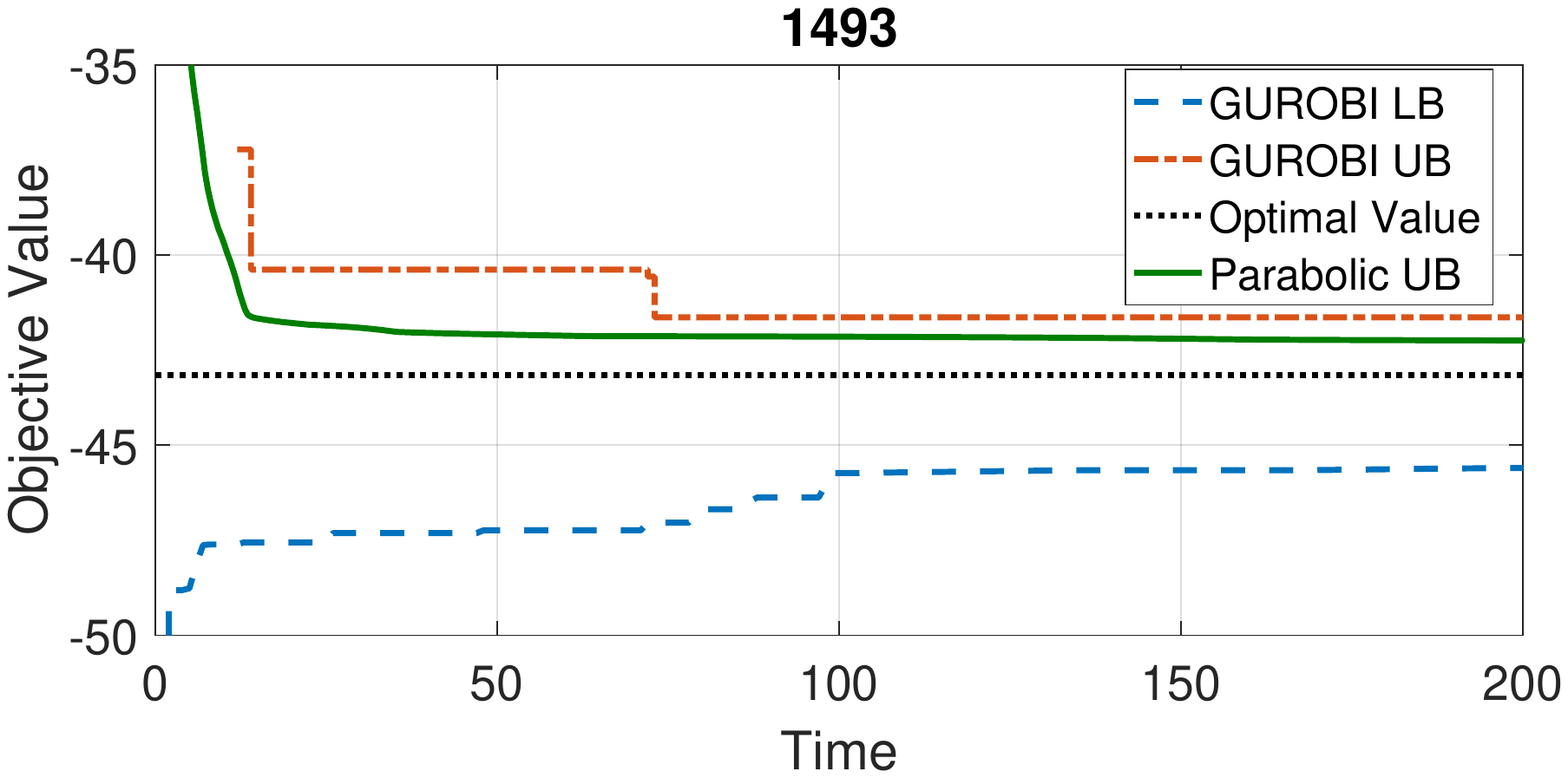}\\
	\vspace{1mm}
	\includegraphics[height=0.24\textwidth]{./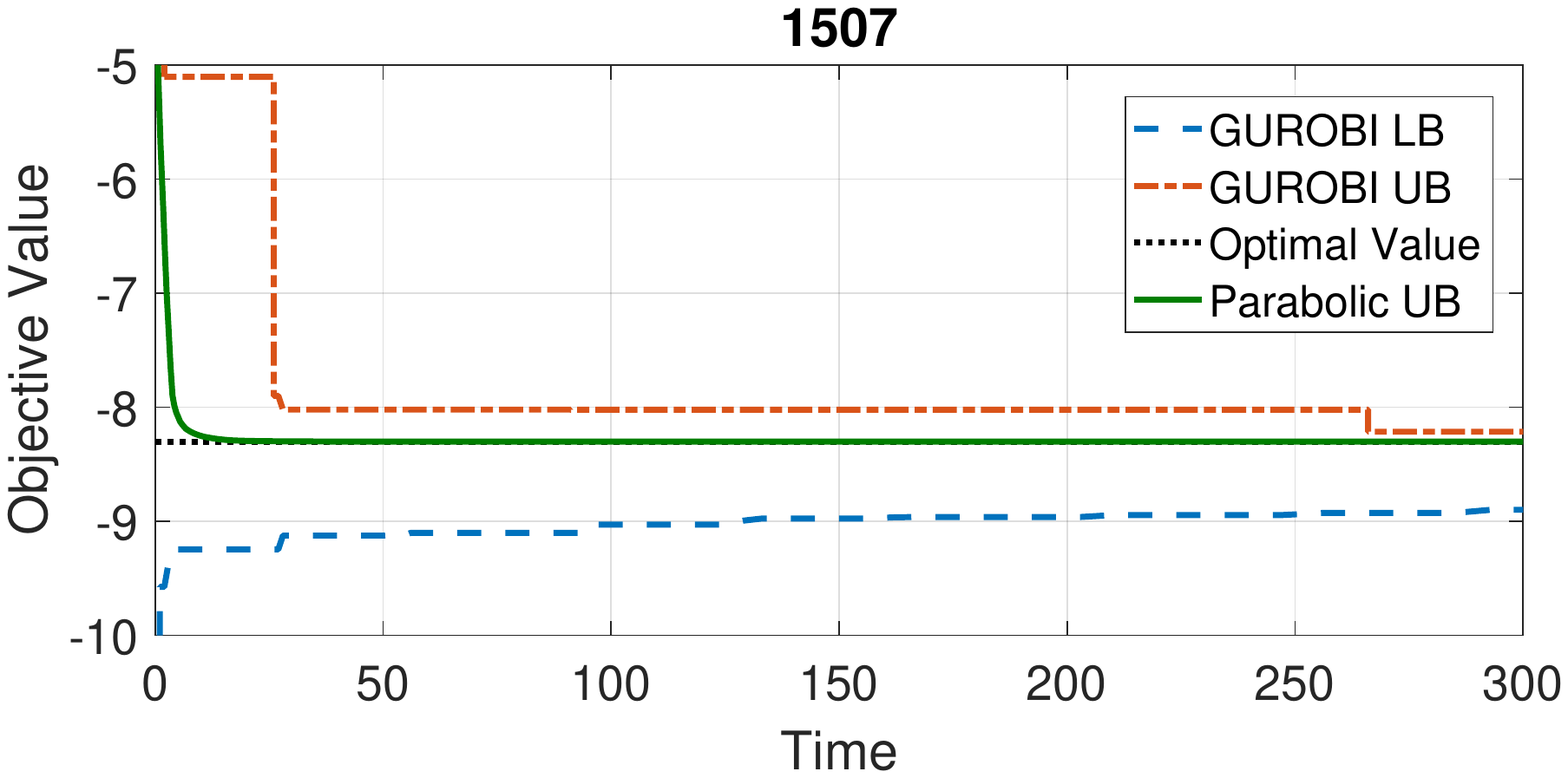}
	\includegraphics[height=0.24\textwidth]{./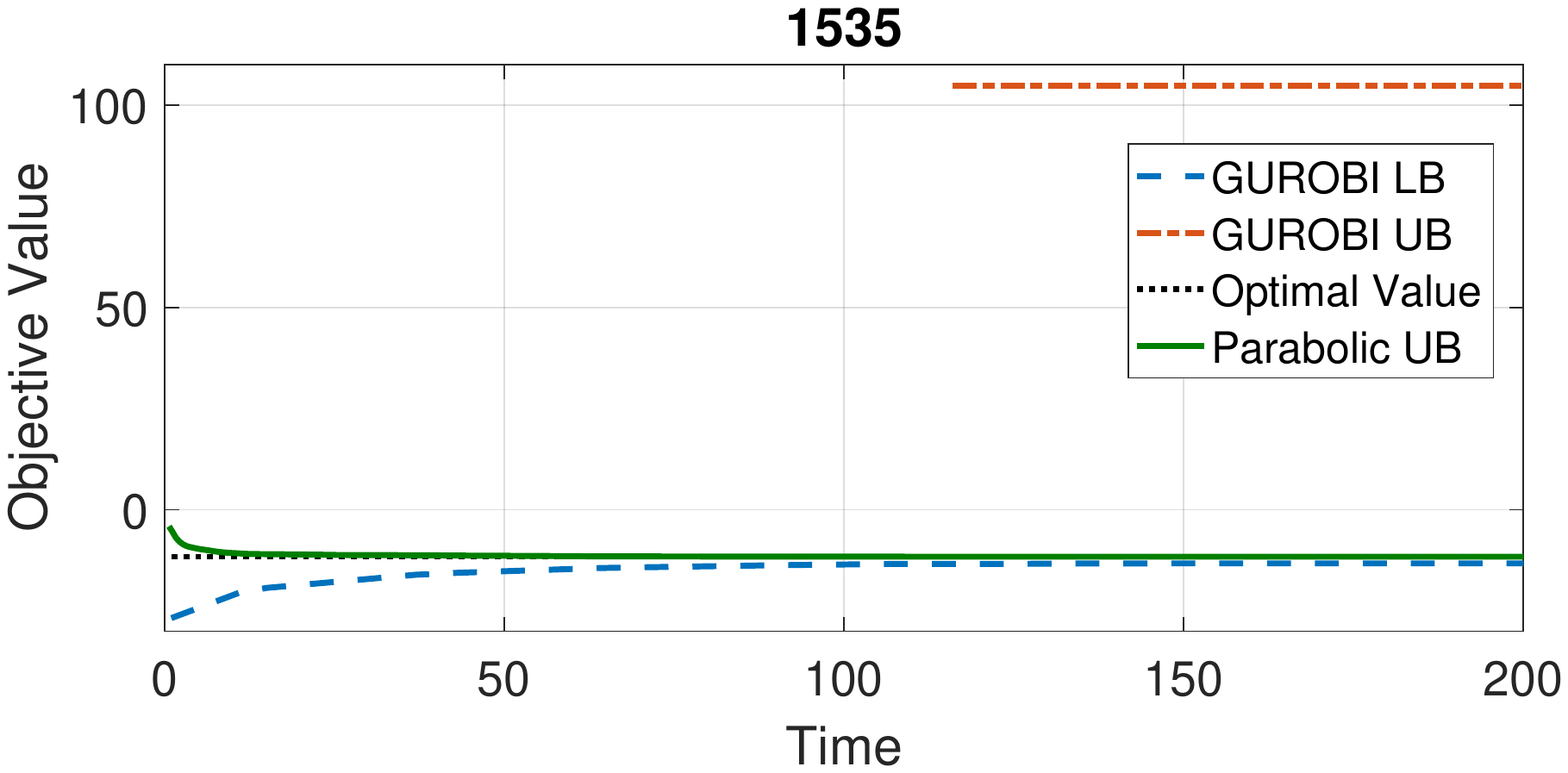}\\
	\caption{\small
		Bounds offered by GUROBI 9.0 and the penalized parabolic relaxation for QPLIB cases.
	}
	\label{fig_good1}
\end{figure*}

\begin{figure*}[h!]
	\centering
	\includegraphics[height=0.24\textwidth]{./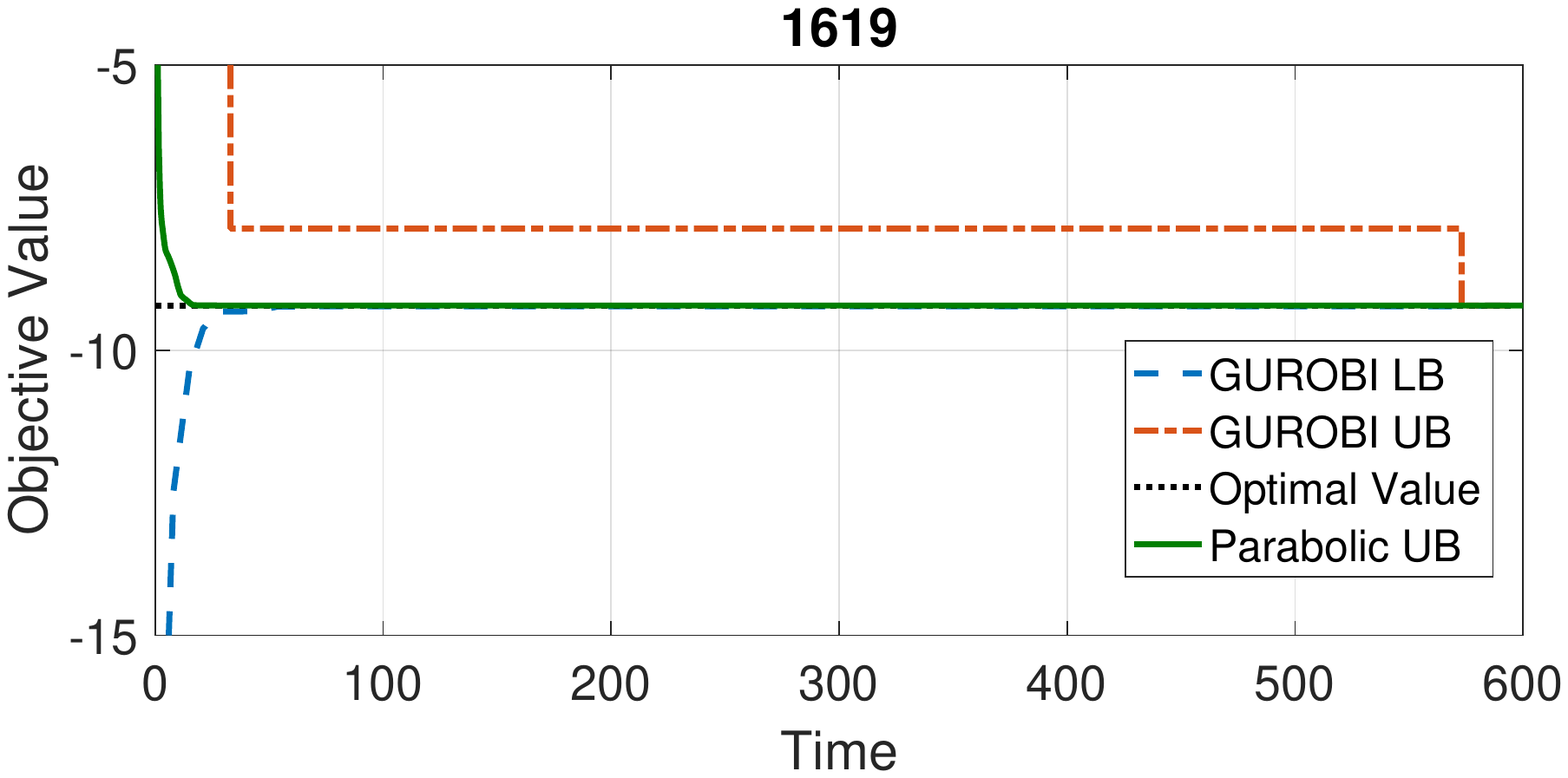}
	\includegraphics[height=0.24\textwidth]{./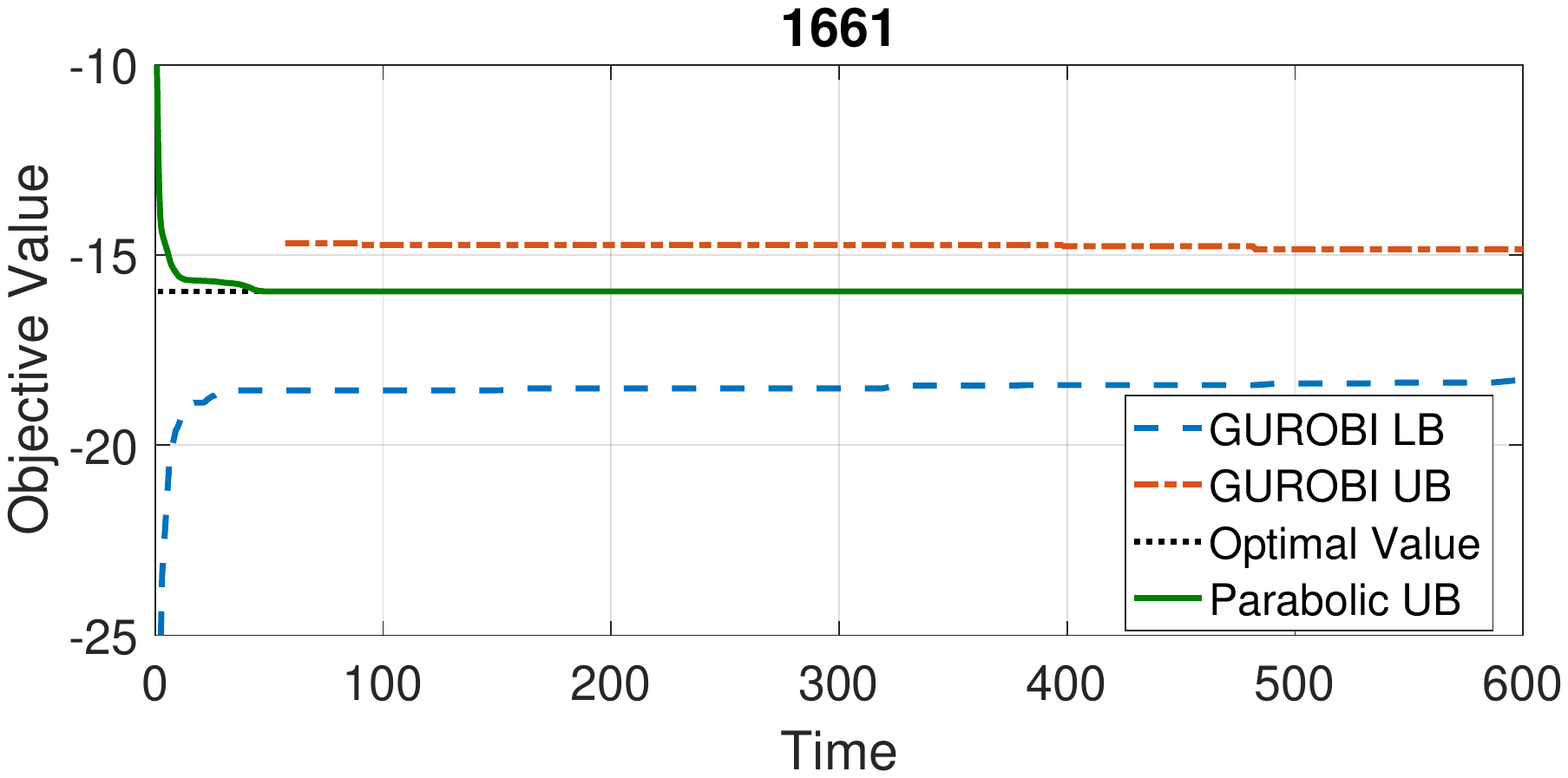}\\
	\vspace{1mm}
	\includegraphics[height=0.24\textwidth]{./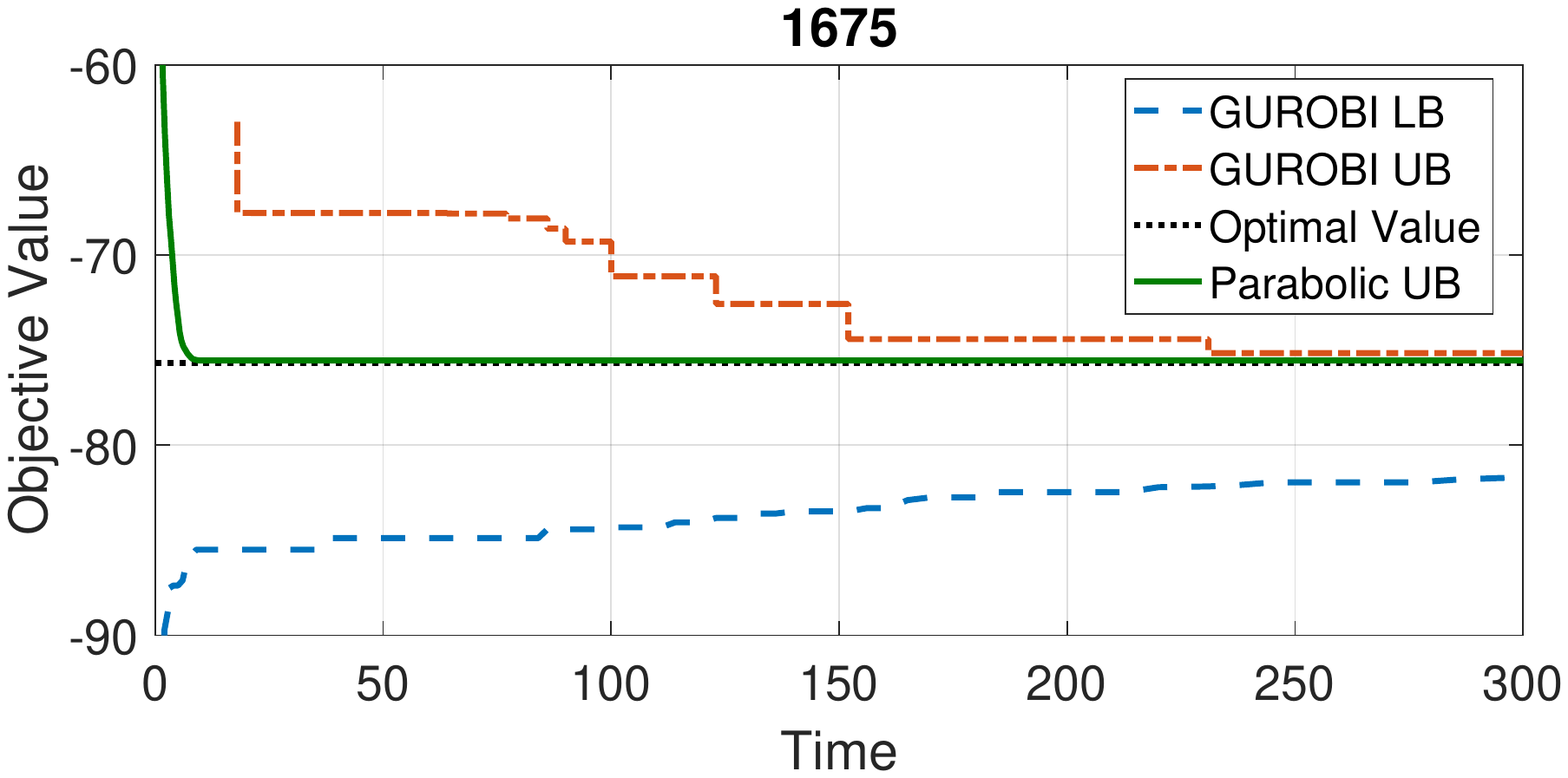}
	\includegraphics[height=0.24\textwidth]{./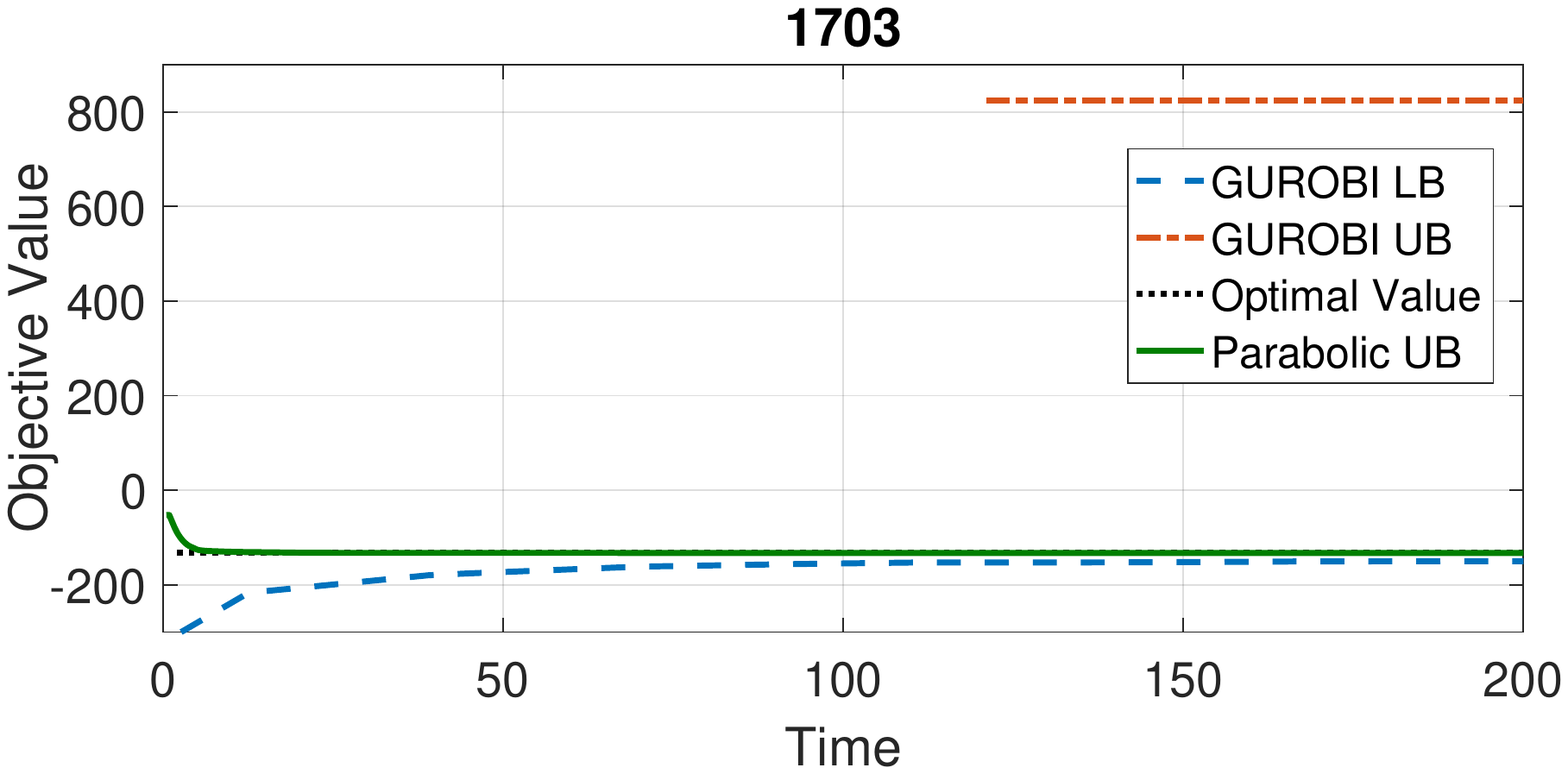}\\
	\vspace{1mm}
	\includegraphics[height=0.24\textwidth]{./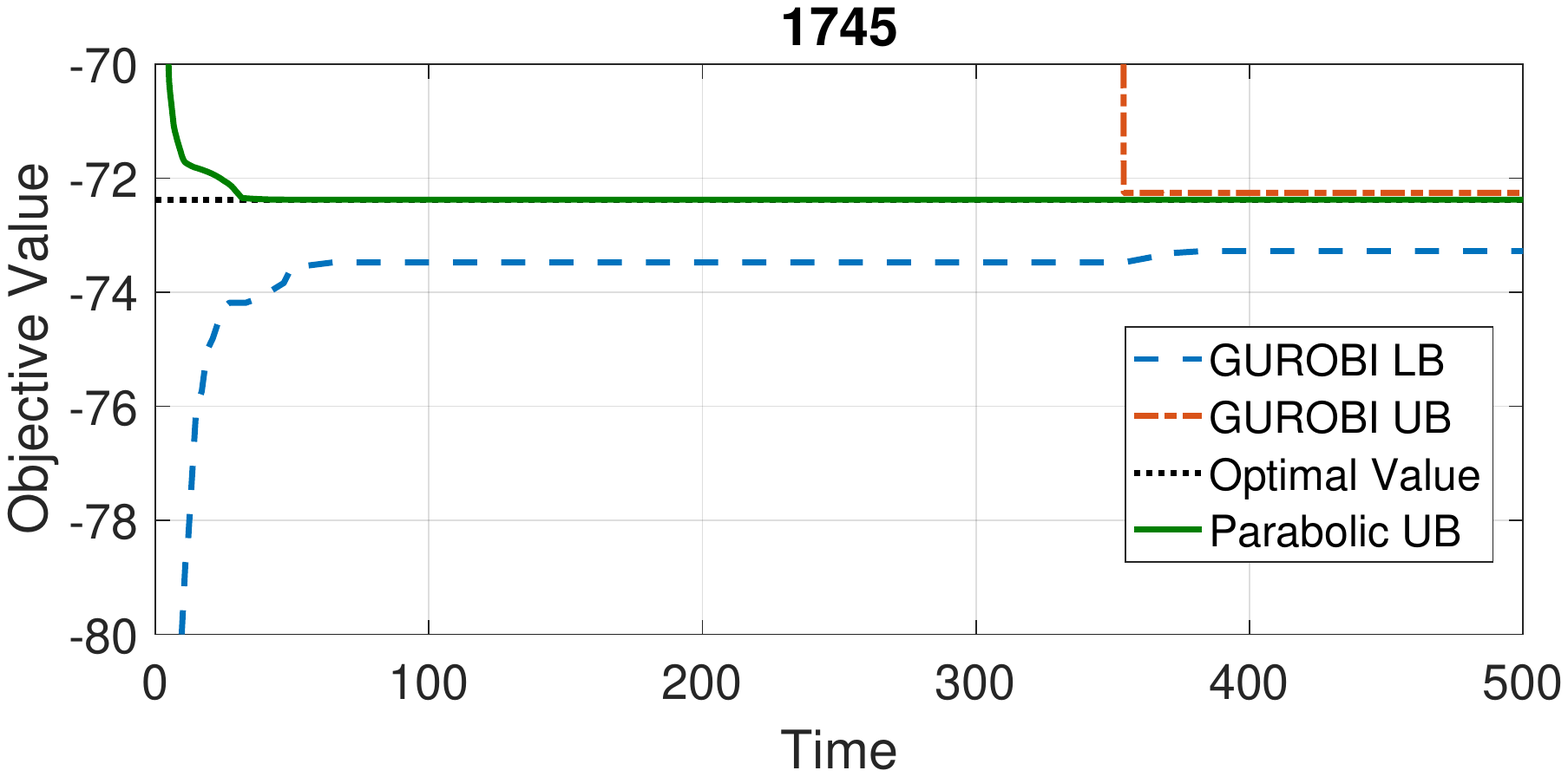}
	\includegraphics[height=0.24\textwidth]{./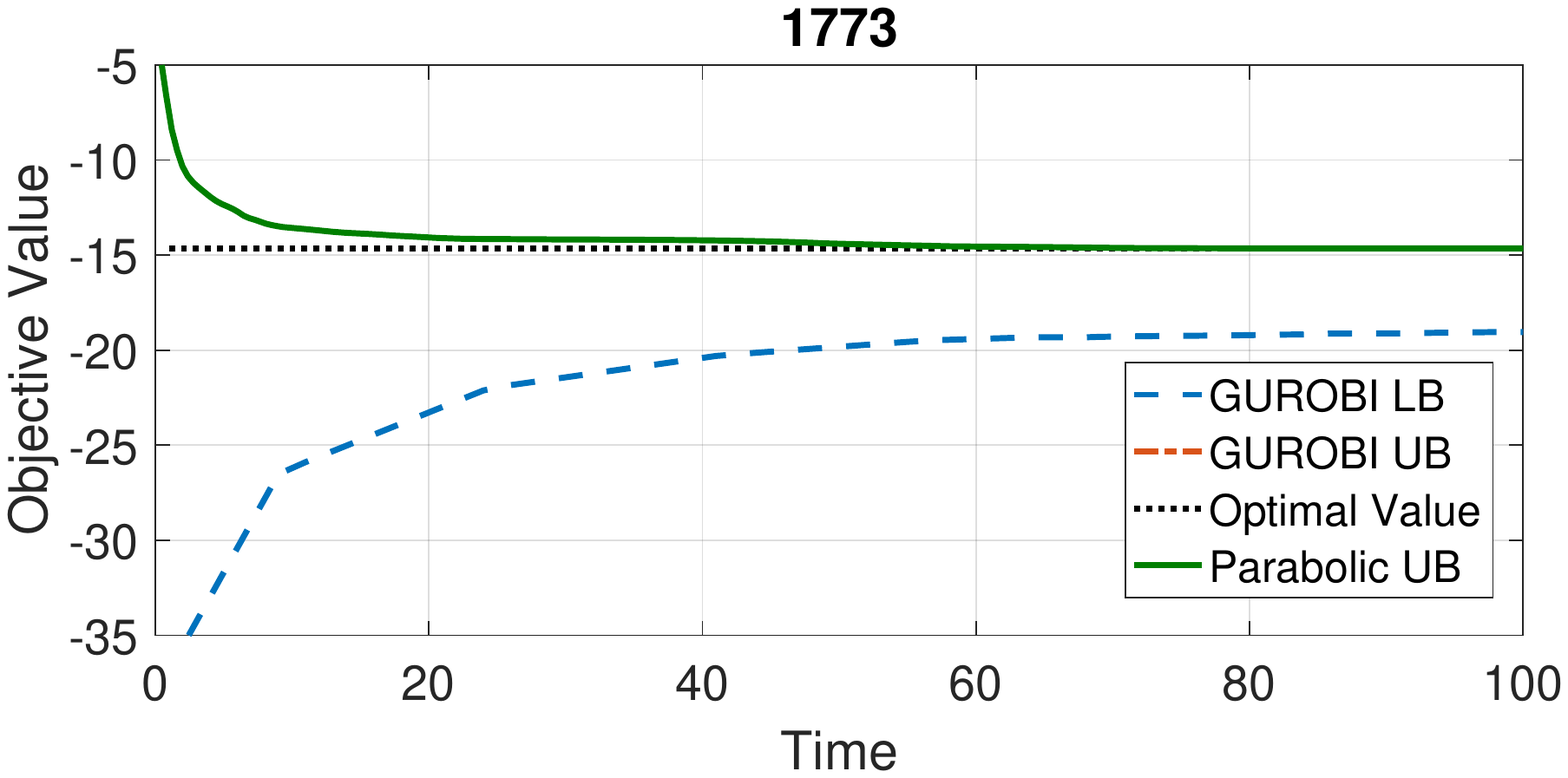}\\
	\vspace{1mm}
	\includegraphics[height=0.24\textwidth]{./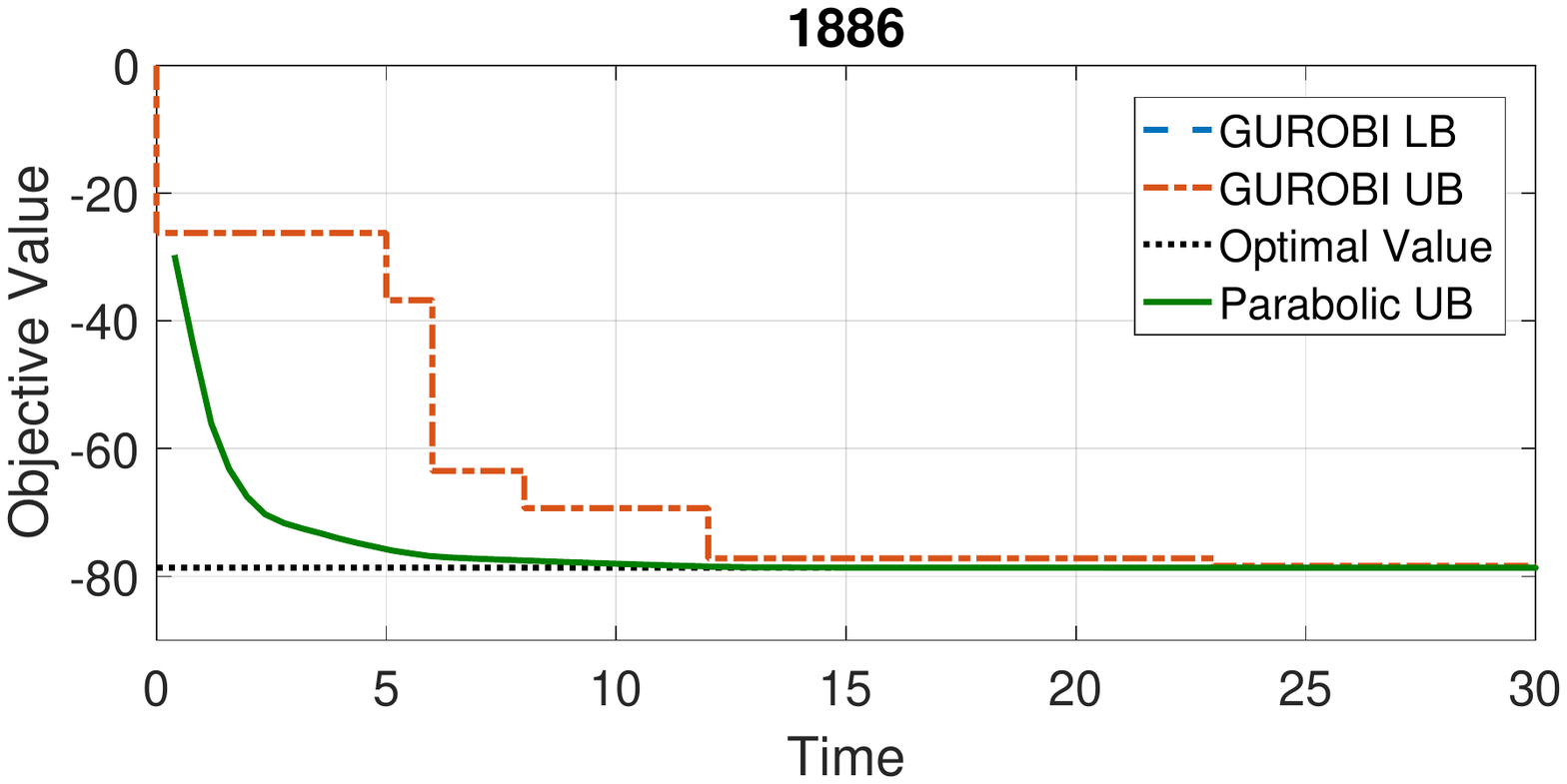}
	\includegraphics[height=0.24\textwidth]{./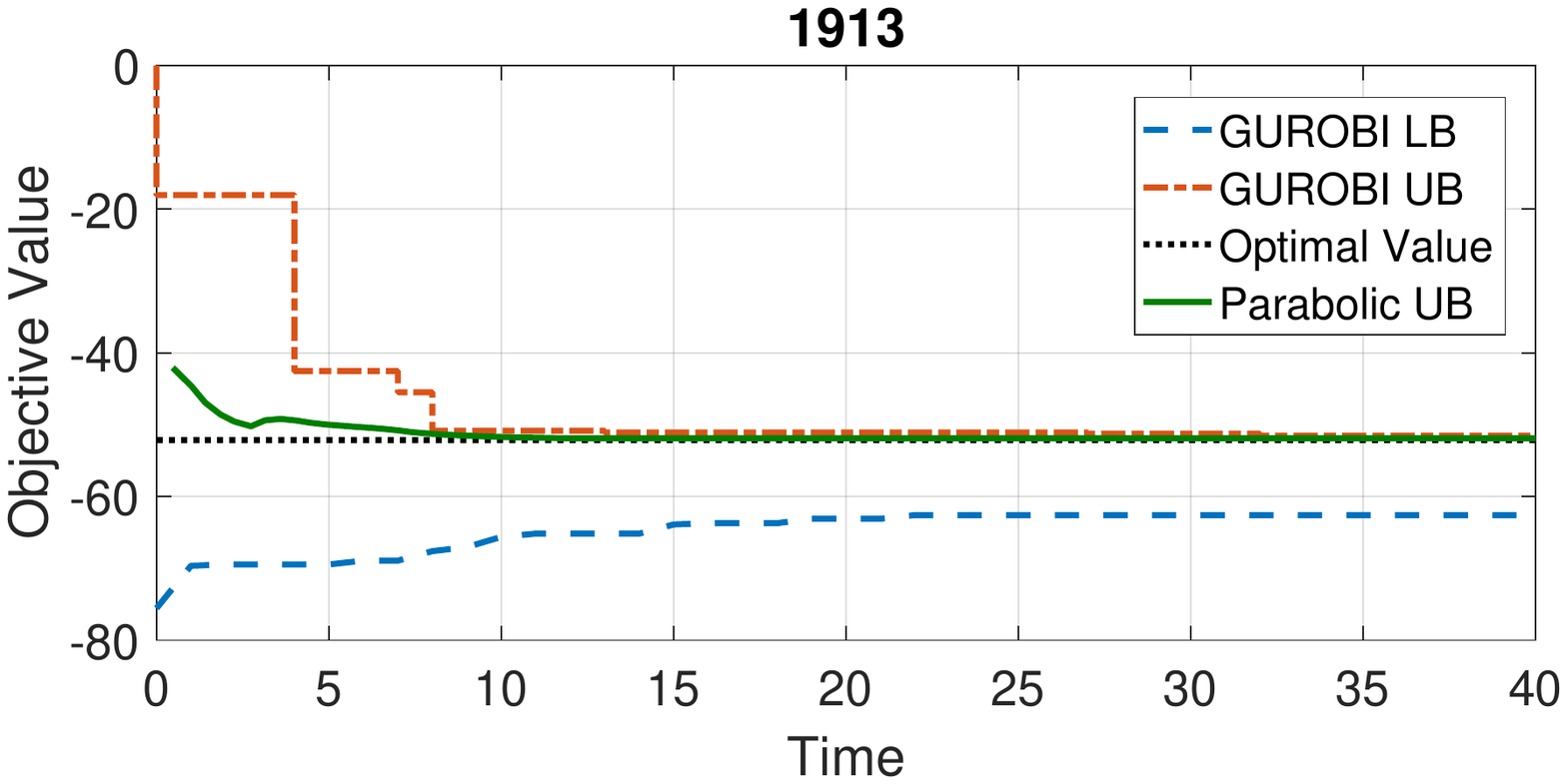}\\
	\vspace{1mm}
	\includegraphics[height=0.24\textwidth]{./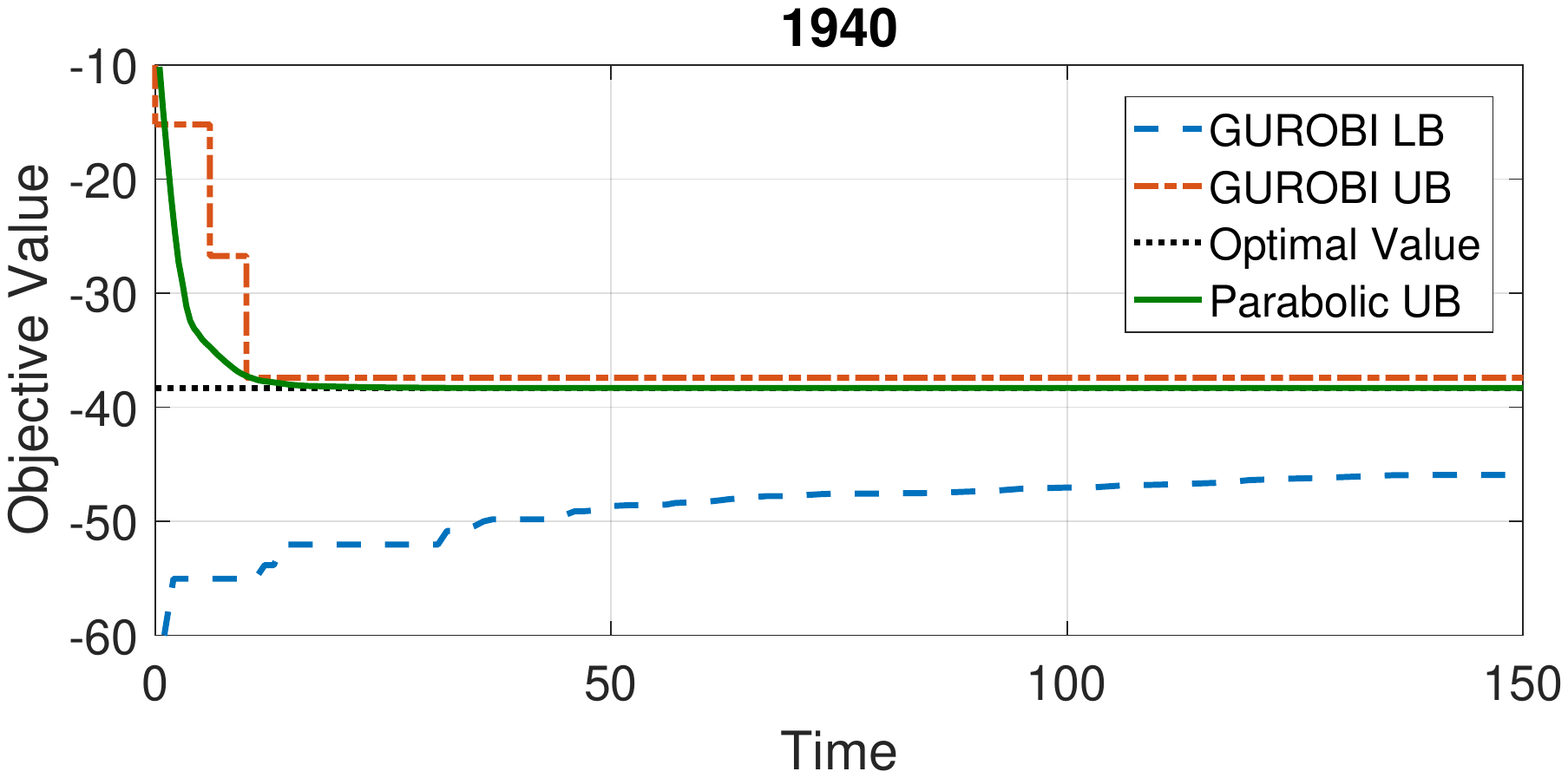}
	\includegraphics[height=0.24\textwidth]{./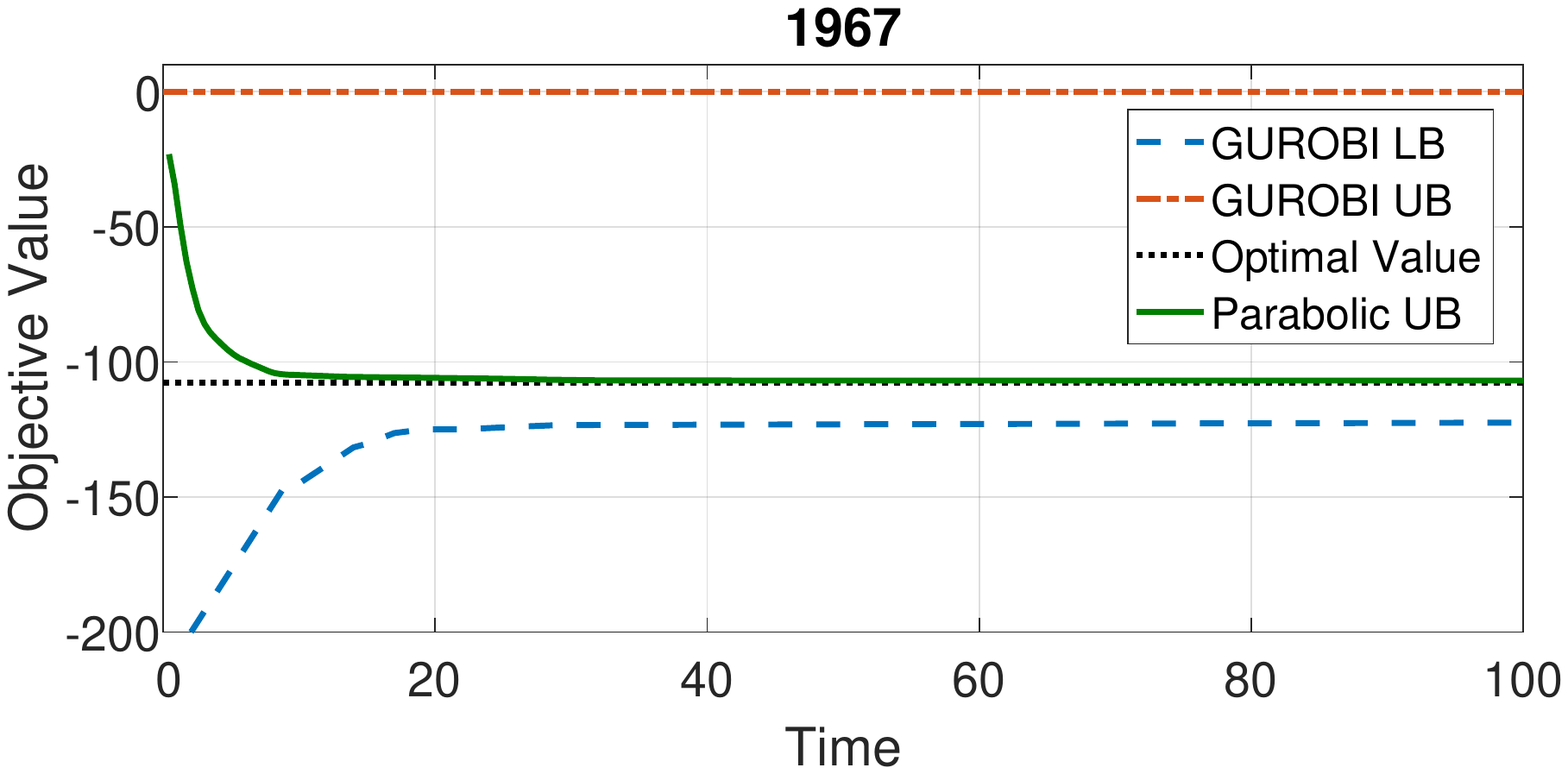}\\
	\caption{\small
		Bounds offered by GUROBI 9.0 and the penalized parabolic relaxation for QPLIB cases.
	}
	\label{fig_good2}
\end{figure*}

\begin{figure*}[t]
	\centering
	\includegraphics[height=0.24\textwidth]{./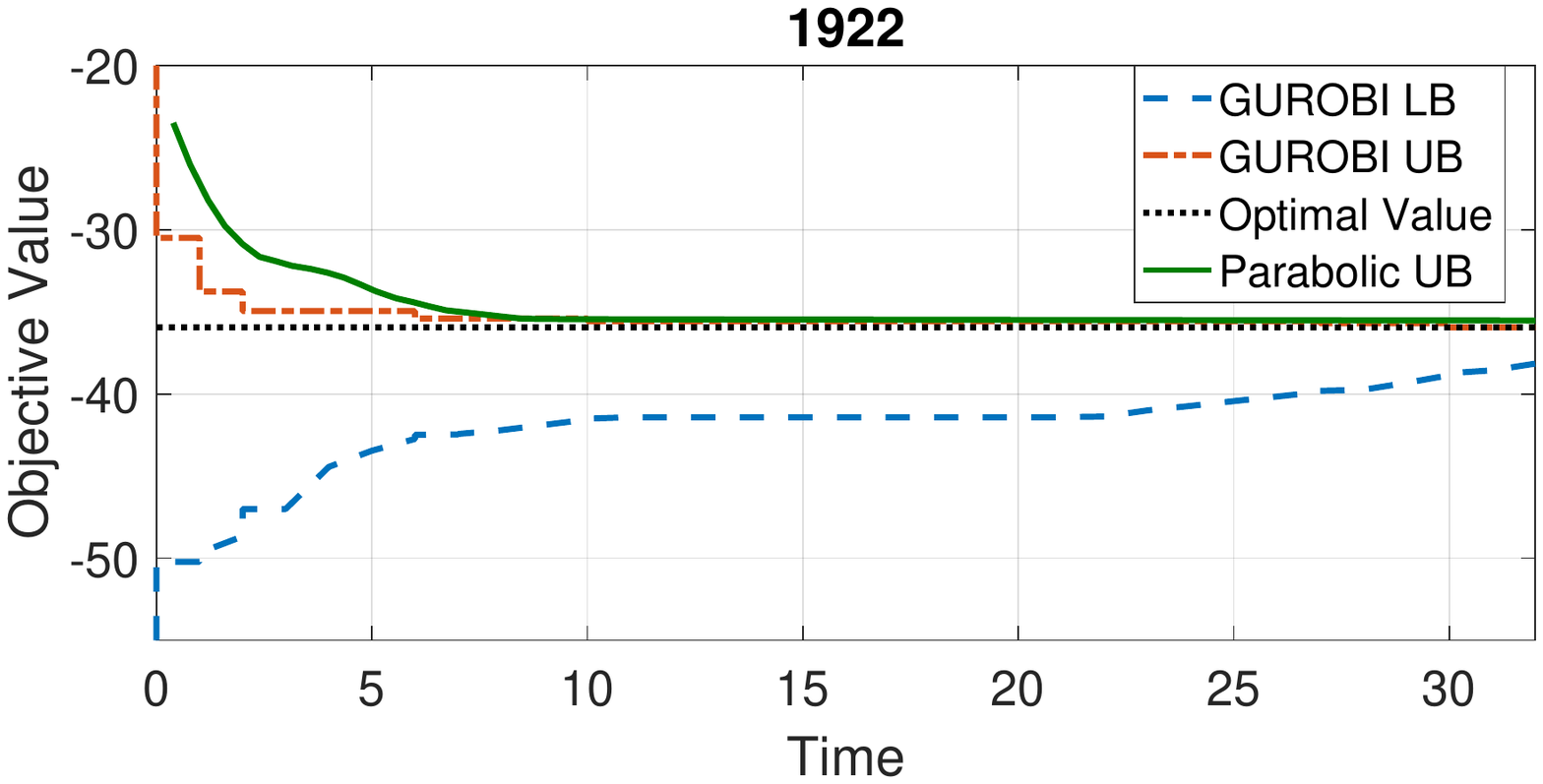}
	\includegraphics[height=0.24\textwidth]{./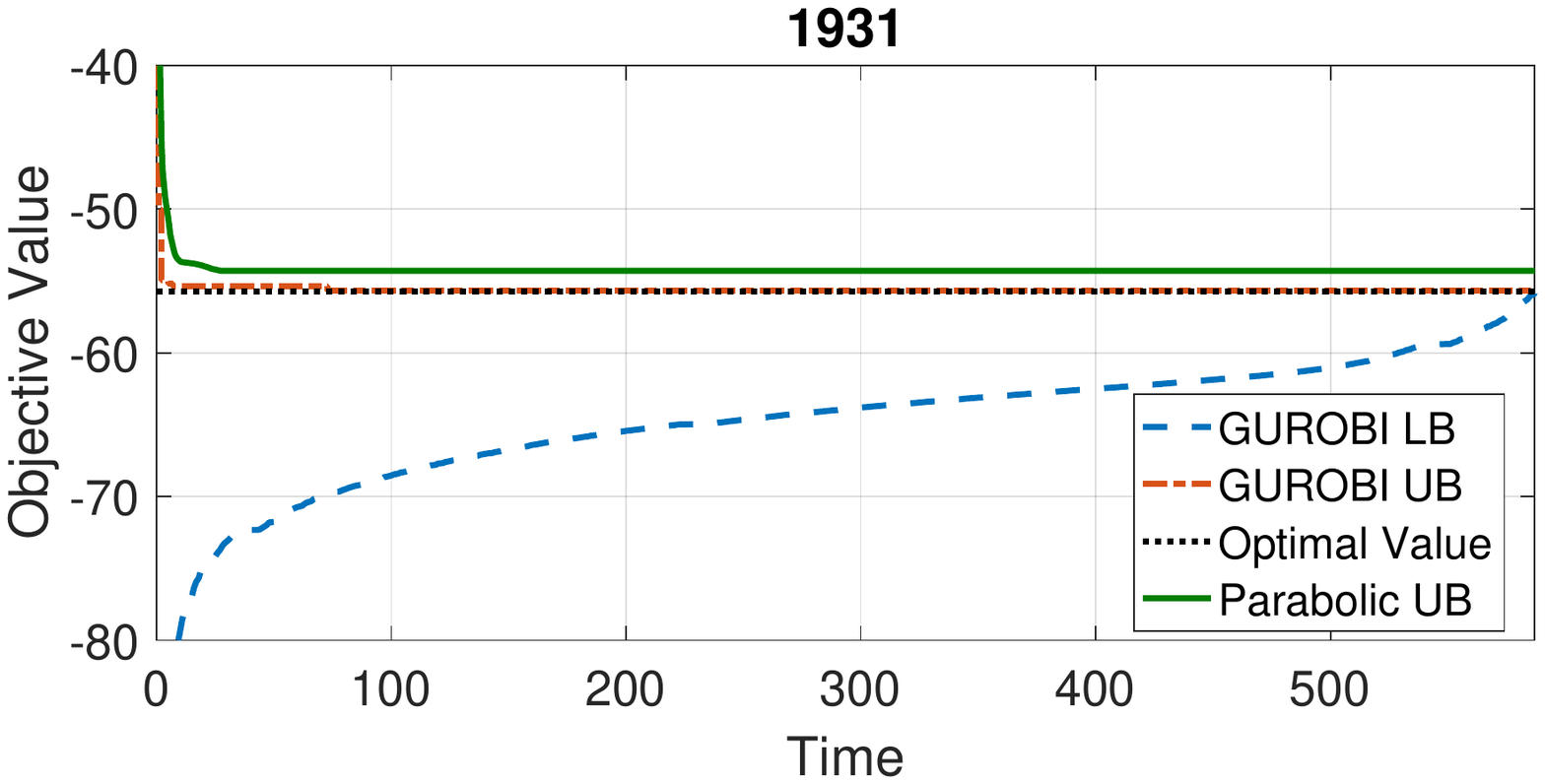}\\
	\vspace{1mm}
	\includegraphics[height=0.24\textwidth]{./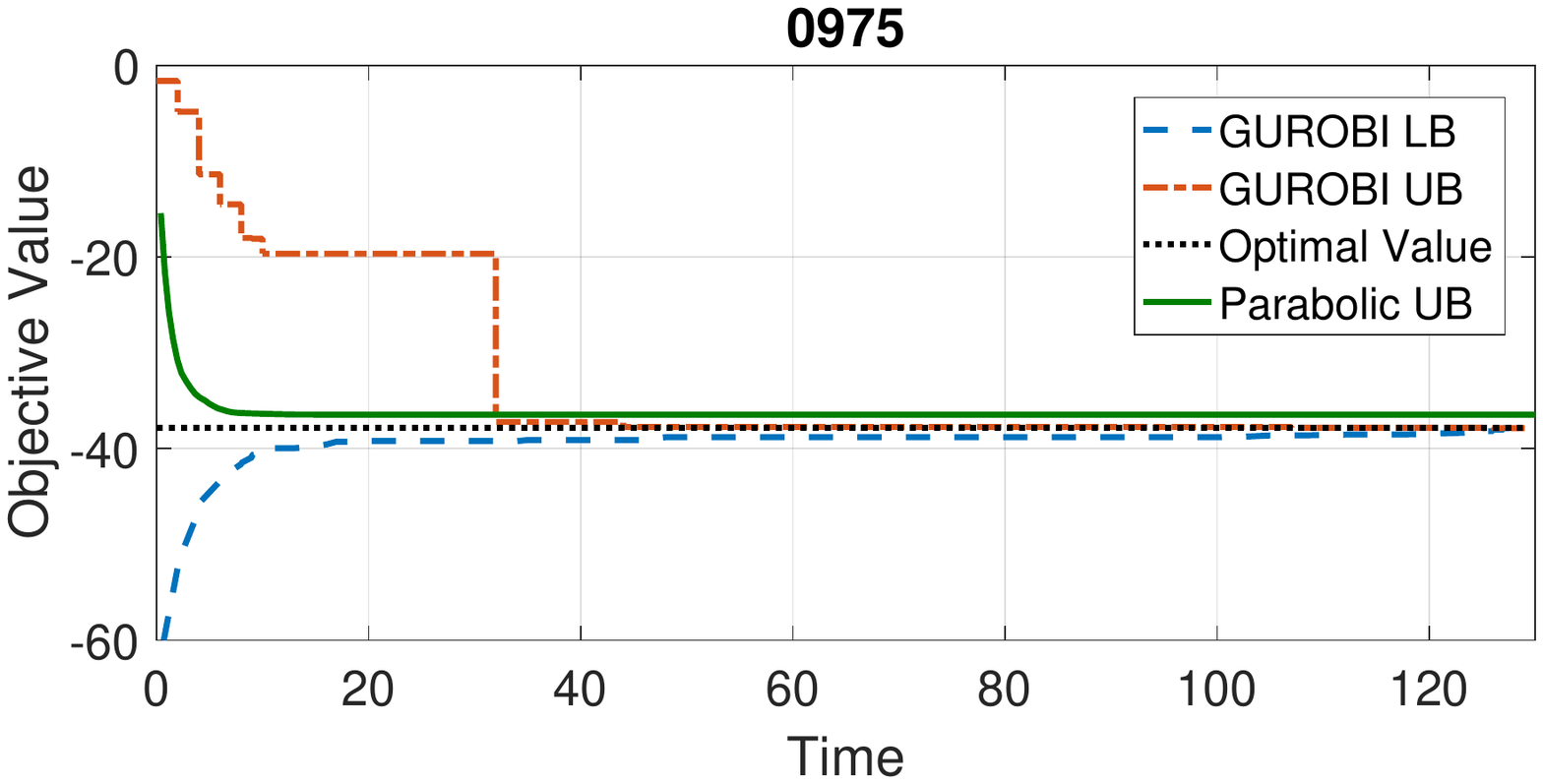}
	\includegraphics[height=0.24\textwidth]{./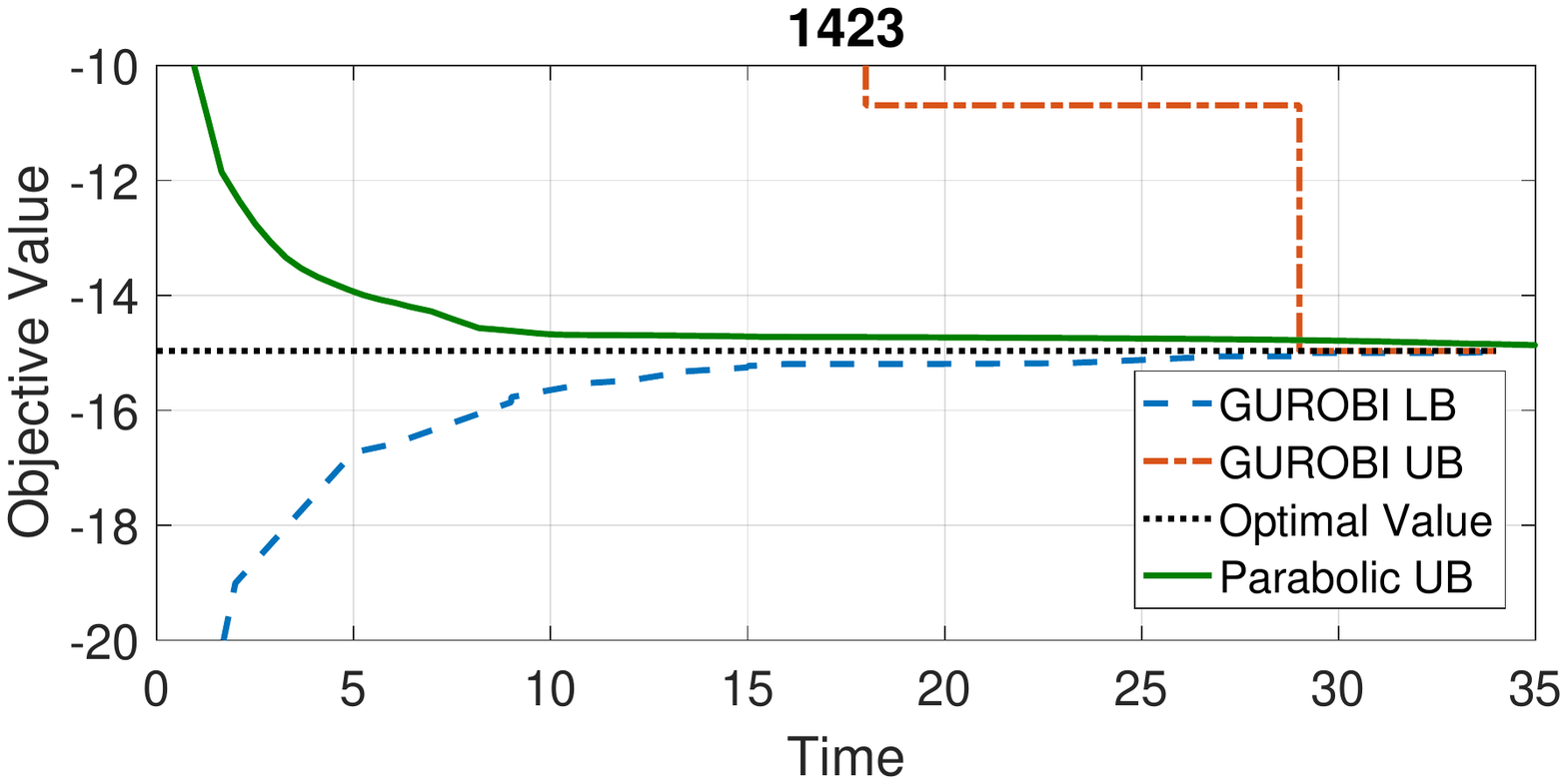}\\
	\caption{\small
		Bounds offered by GUROBI 9.0 and the penalized parabolic relaxation for QPLIB cases.
	}
	\label{fig_bad}
\end{figure*}

\pagebreak

\subsection{Identification of Linear Dynamical Systems}\label{sec_sysid}

Following \cite{MEC} and \cite{fattahi2018data}, this case study is concerned with the problem of identifying the parameters of linear dynamical systems given limited observations of state vectors. Optimization is an important tool for problems involving dynamical systems such as the identification of transfer functions and control synthesis \cite{1_rotkowitz2005characterization, 2_wang2019system,3_wang2018separable,4_kheirandishfard2018convex,5_fattahi2018transformation}.
One of these computationally-hard problems is data-driven system identification (without intrusive means) which has been widely studied in the literature of control \cite{6_sarkar2018fast,7_pereira2010learning}. 
In this case study, we cast system identification as a non-convex QCQP and evaluate ability of the proposed parabolic relaxation in solving very large-scale instances of this problem.
We consider identification problems with over 10,000 variables for which:
\begin{itemize}
\item SDP relaxation is intractable.
\item Both variants of SOCP relaxation as well as $\Scal^{(2+\msf)}_{\nsf,\msf}$ fail to produce feasible points due to the absence of comprehensive boundary property (CBP). \footnote{$\Scal^{(2+\msf)}_{\nsf,\msf}$ and CBP are defined in \cite{parabolic_part1}.}
\item Parabolic relaxation is consistently successful in recovering the ground truth feasible point.
\end{itemize}

To formulate the problem, consider a discrete-time linear system described by the following dynamical equations:
\begin{align}
{\xsf[\tsf+1]}&={\Asf}{\xsf[\tsf]}+{\Bsf}\usf[\tsf],
&& \tsf\in\Tcal,\label{sys}
\end{align}
throughout a time horizon $\Tcal=\{1,2,\ldots,|\Tcal|\}$, where:
\begin{itemize}
\item $\{\xsf[\tsf]\in\mathbb{R}^{\nsf}\}_{\tsf\in\Tcal}$ are the state vectors that are known at times $\tsf\in\Kcal$, where $\Kcal\subseteq\Tcal$ and unknown at times $\tsf\in\Ucal\triangleq\Tcal\setminus\Kcal$. 
\item $\{\usf[\tsf]\in\mathbb{R}^{\msf}\}_{\tsf\in\Tcal}$ are the known control command vectors that are enforced externally to regulate the behavior of system.
\item $\Asf\in\mathbb{R}^{{\nsf}\times {\nsf}}$ and 
$\Bsf\in\mathbb{R}^{{\nsf}\times {\msf}}$ are fixed unknown matrices.
\end{itemize}
Our goal is to determine the pair of ground truth matrices $(\Asf^{\mathrm{true}},\Bsf^{\mathrm{true}})$, given a sample trajectory of the control commands $\{\usf[\tsf]\in\mathbb{R}^{\msf}\}_{\tsf\in\Tcal}$ and the incomplete state vectors $\{\xsf[\tsf]\in\mathbb{R}^{\nsf}\}_{\tsf\in\Kcal}$. This problem amounts to the following optimization with non-convex quadratic constraints:
\begin{subequations}
\begin{align}
	&\text{find}
	&&\hspace{-0.25cm} 
	\{\xsf[\tsf],\zsf[\tsf]\in\Rbb^{\nsf}\}_{\tsf\in\Ucal},\quad
	\Asf\in\Rbb^{{\nsf}\times {\nsf}},\quad
	\Bsf\in\Rbb^{{\nsf}\times {\msf}}
	\label{lav_obj}\\
	& \text{subject to}
	&&\hspace{-0.25cm} 
	\xsf[\tsf+1]=\Asf\xsf[\tsf]+\Bsf\usf[\tsf],
	&&&&\tsf\in\Kcal,\label{lav_cons}\\
	& 
	&&\hspace{-0.25cm} 
	\xsf[\tsf+1]=\hspace{-2.0mm}\underbrace{\Asf\xsf[\tsf]}_{\text{non-convex}}\hspace{-1.75mm}+\hspace{0.75mm} \Bsf\usf[\tsf],
	&&&&\tsf\in\Ucal.\label{lav_cons_2}
\end{align}
\end{subequations}
Observe that the constraint \eqref{lav_cons} is linear because $\xsf[\tsf]$ is known for every $\tsf\in\Kcal$, whereas the constraint \eqref{lav_cons_2} is non-convex due to the bilinear term $\Asf\xsf[\tsf]$ because when  $\tsf\in\Ucal$, both $\xsf[\tsf]$ and $\Asf$ are unknown. Problem \eqref{lav_obj} -- \eqref{lav_cons_2} can be cast in the lifted form with respect to the matrix variable: 
\begin{align}
\Ybf\triangleq
\begin{bmatrix}
	\Asf\T\;\;
	\big[\xsf[\tsf]\big]_{\tsf\in\Ucal}
\end{bmatrix}\T\in\Rbb^{({\nsf}+|\Ucal|)\times {\nsf}},
\end{align}
which is composed of the matrix $\Asf$ and all of the unknown state vectors:
\begin{subequations}
\begin{align}
	&\text{find}
	&&
	\{\xsf[\tsf]\in\Rbb^{\nsf}\}_{\tsf\in\Ucal},\quad
	\Asf\in\Rbb^{{\nsf}\times {\nsf}},\quad
	\Bsf\in\Rbb^{{\nsf}\times {\msf}},\quad
	\bar{\Asf}\in\Sbb_{\nsf},\quad
	\bar{\Xsf}\in\Sbb_{|\Ucal|}\\
	& \text{subject to}
	&&
	\xsf[\tsf+1]=\Asf\xsf[\tsf]+\Bsf\usf[\tsf],
	&&&&\hspace{-2.25cm} \tsf\in\Kcal,\\
	& 
	&&
	\!\begin{bmatrix}
		\bar{\Asf} & 
		\big[\xsf[\tsf+1]-\Bsf\usf[\tsf]\big]_{\tsf\in\Ucal}\\
		&\vspace{-3mm}\\
		\ast & \bar{\Xsf}
	\end{bmatrix}\!=\!
	\begin{bmatrix}
		\Asf\\
		\vspace{-3mm}\\
		\big[\xsf[\tsf]\big]_{\tsf\in\Ucal}\T
	\end{bmatrix}
	\begin{bmatrix}
		\Asf\T\;\;
		\big[\xsf[\tsf]\big]_{\tsf\in\Ucal}
	\end{bmatrix},
	&&&&\hspace{-1.25cm} \tsf\in\Ucal,\label{sysid_sdp}
\end{align}
\end{subequations}
where $\bar{\Asf}\in\Sbb_{\nsf}$ and $\bar{\Xsf}\in\Sbb_{|\Ucal|}$ account for $\Asf\Asf\T$ and $\big[\xsf[\tsf]\big]_{\tsf\in\Ucal}\T\big[\xsf[\tsf]\big]_{\tsf\in\Ucal}$, respectively. Transformation of ``$=$'' to ``$\succeq$'' results in an SDP relaxation of this problem. However, this relaxation can be intractable because of the dense bipartite clique $\big[\xsf[\tsf+1]-\Bsf\usf[\tsf]\big]_{\tsf\in\Ucal}$. Alternatively, parabolic relaxation allows us to omit all of the off-diagonal elements of $\bar{\Asf}$ and $\bar{\Xsf}$ from the lifted formulation. Hence, in order to formulate parabolic relaxation of problem \eqref{lav_obj} -- \eqref{lav_cons} we only need to introduce a modest number of auxiliary variables:
\begin{itemize}
\item Define $\bar{\boldsymbol{\asf}}=\mathrm{diag}\{\bar{\Asf}\}\in\Rbb^{\nsf}$ as the variable whose $k$-th element represents $\|\Asf\T\ebf_k\|_2^2$, where $\{\ebf_k\}^{\nsf}_{k=1}$ is the standard base in $\Rbb^{\nsf}$.
\item Define $\bar{\xsf}=\mathrm{diag}\{\bar{\Xsf}\}\in\Rbb^{|\Ucal|}$ as the variable whose elements represent $\|\xsf[\tsf]\|_2^2$ for all $\tsf\in\Ucal$,
\end{itemize}
Then, problem \eqref{lav_obj} -- \eqref{lav_cons} can be relaxed as:
\begin{subequations}
\begin{align}
	&\text{find}
	&&
	\{\xsf[\tsf]\in\Rbb^{\nsf}\}_{\tsf\in\Ucal},\quad
	\Asf\in\Rbb^{{\nsf}\times {\nsf}},\quad
	\Bsf\in\Rbb^{{\nsf}\times {\msf}},\quad
	\bar{\asf}\in\Rbb^{\nsf},\quad
	\bar{\xsf}\in\Rbb^{|\Ucal|}\\
	& \text{subject to}
	&&
	\xsf[\tsf+1]=\Asf\xsf[\tsf]+\Bsf\usf[\tsf],
	&&&&\hspace{-4.15cm} \tsf\in\Kcal,\\
	& &&\hspace{-1.7cm} 
	\bar{\mathsf{a}}_k+\bar{\mathsf{x}}_{\tsf}
	+2\,\ebf\T_k\big(\xsf[\tsf+1]-\Bsf\usf[\tsf]\big)
	\geq\|\Asf\T\ebf_k+\xsf[\tsf]\|_2^2,
	&&&&\hspace{-4.15cm} \tsf\in\Ucal,\; k\in\{1,\ldots,{\nsf}\},\\
	& &&\hspace{-1.7cm} 
	\bar{\mathsf{a}}_k+\bar{\mathsf{x}}_{\tsf}
	-2\,\ebf\T_k\big(\xsf[\tsf+1]-\Bsf\usf[\tsf]\big)
	\geq\|\Asf\T\ebf_k-\xsf[\tsf]\|_2^2,
	&&&&\hspace{-4.15cm} \tsf\in\Ucal,\; k\in\{1,\ldots,{\nsf}\},\\
	& &&\hspace{-1.7cm}
	\bar{\mathsf{x}}_t \,\geq\|\xsf[\tsf]\|_2^2,
	&&&&\hspace{-4.15cm} \tsf\in\Ucal,\\
	& &&\hspace{-1.7cm}
	\bar{\mathsf{a}}_k \geq\|\Asf\T\ebf_k\|_2^2,
	&&&&\hspace{-4.15cm} k\in\{1,\ldots,{\nsf}\}.
\end{align}
\end{subequations}

In this case study, we consider system identification problems with ${\nsf}=16$, ${\msf}=14$, $|\Tcal|=801$ and $\Kcal=\{1,5,9,\ldots,801\}$, i.e., one in every four state vectors is assumed known. The resulting QCQP variable $\boldsymbol{x}$ is $10,096$-dimensional. The ground truth matrices are chosen as follows:
\begin{itemize}
\item Every element of $\Asf-\Ibf_{\! \nsf}$ is uniformly chosen from the interval $[-0.25,+0.25]$
\item Elements of $\Bsf\in\mathbb{R}^{16\times 14}$ have zero-mean standard normal distribution.
\item Elements of $\xsf[1]$ are uniformly chosen from the interval $[0.5,1.5]$.
\item For every $\tsf\in\Tcal$, we have $\usf[\tsf]=\Fsf\xsf[\tsf]+\wsf[\tsf]$, where the elements of $\wsf[\tsf]$ have independent Gaussian distribution with zero mean and standard deviation $0.1$. Additionally, $\Fsf=-(\Ibf_{\! \msf}+\Bsf^{\top}\Psf)^{-1}\Bsf^{\top}\Psf\Asf$ is an optimal LQR controller with $\Psf$ representing the unique positive-definite solution to the Riccati equation:
\begin{align}
	\Asf^{\!\top}\Psf\Asf + \Ibf_{\nsf} - \Psf = \Asf^{\!\top}\Psf\Bsf (\Ibf_{\nsf} + \Bsf^{\!\top}\Psf\Bsf)^{-1}  \Bsf^{\!\top}\Psf\Asf.
\end{align}
\end{itemize}
We generate 15 random systems and solve the resulting problem, using the sequential Algorithm \ref{alg:1} with the initial point $\check{\Asf}=\Ibf_{\nsf}$ and $\check{\xsf}[\tsf]=\boldsymbol{0}_{\nsf}$ for every $t\in\Ucal$.
Figure \ref{fig_SYSID} illustrates the convergence of the error function:
\begin{align}
\frac{1}{\nsf}\|\Asf-\Asf^{\mathrm{true}}\|_{\Frm}+ \frac{1}{\sqrt{\nsf\msf}}\|\Bsf-\Bsf^{\mathrm{true}}\|_{\Frm},
\end{align}
for the 15 instances. Each round of the penalized parabolic is solved in less than 15 seconds. Due to the bipartite ${\nsf}\times|\Ucal|$ clique in \eqref{sysid_sdp}, SDP relaxation is intractable even when sparsity is exploited.
Moreover, due to the absence of CBP, neither of SOCP relaxations variants in \cite[Section 5]{parabolic_part1} converge to the true system matrices.


\begin{figure*}[h!]
\centering
\includegraphics[height=0.35\textwidth]{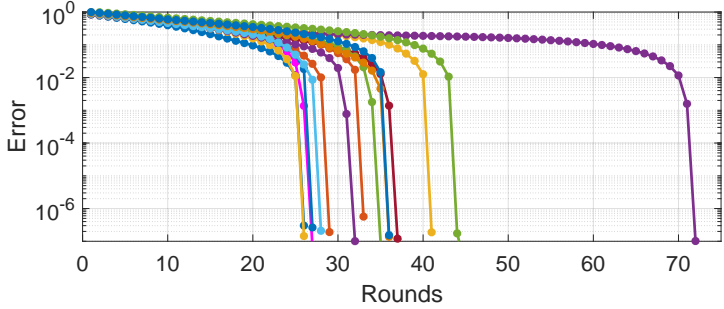}
\caption{\small
	The performance of the sequential Algorithm \ref{alg:1} for system identification.
}
\label{fig_SYSID}
\end{figure*}

\section{Conclusions}
In the first this work \cite{parabolic_part1}, we proposed a computationally efficient convex relaxation, named {\it parabolic relaxation}, to find near optimal solutions for quadratically-constrained quadratic programming. 
In cases where the relaxation is not exact, a penalized relaxation is developed to steer the relaxation towards a feasible solution. To improve the quality of the recovered solution, we propose a sequential scheme that starts from an arbitrary point (feasible or infeasible) and solves a sequence of penalized problems to recover a feasible near optimal solution. We prove the convergence of the sequential procedure to a KTT point. The experimental results show the effectiveness of our proposed approach on benchmark QCQP cases as well as the problem of identifying linear dynamical systems.

\section{Acknowledgments}
We are grateful to GAMS Development Corporation for providing us with unrestricted access to a full set of solvers throughout the project. 

\bibliographystyle{spbasic}
\bibliography{references}

\begin{thebibliography}{56}
\providecommand{\natexlab}[1]{#1}
\providecommand{\url}[1]{{#1}}
\providecommand{\urlprefix}{URL }
\expandafter\ifx\csname urlstyle\endcsname\relax
  \providecommand{\doi}[1]{DOI~\discretionary{}{}{}#1}\else
  \providecommand{\doi}{DOI~\discretionary{}{}{}\begingroup
  \urlstyle{rm}\Url}\fi
\providecommand{\eprint}[2][]{\url{#2}}

\bibitem[{Agler et~al.(1988)Agler, Helton, McCullough, and
  Rodman}]{agler1988positive}
Agler J, Helton W, McCullough S, Rodman L (1988) Positive semidefinite matrices
  with a given sparsity pattern. Linear Algebra and its Applications
  107:101--149

\bibitem[{Ahmadi and Hall(2017)}]{ahmadi2017sum}
Ahmadi AA, Hall G (2017) Sum of squares basis pursuit with linear and second
  order cone programming. Algebraic and geometric methods in discrete
  mathematics 685:27--53

\bibitem[{Ahmadi and Majumdar(2019)}]{ahmadi2017dsos}
Ahmadi AA, Majumdar A (2019) {DSOS} and {SDSOS} optimization: more tractable
  alternatives to sum of squares and semidefinite optimization. SIAM Journal on
  Applied Algebra and Geometry 3(2):193--230

\bibitem[{Ahmadi et~al.(2017)Ahmadi, Dash, and Hall}]{ahmadi2017optimization}
Ahmadi AA, Dash S, Hall G (2017) Optimization over structured subsets of
  positive semidefinite matrices via column generation. Discrete Optimization
  24:129--151

\bibitem[{Atamt\"urk and G\'omez(2019)}]{AG:rank-one}
Atamt\"urk A, G\'omez A (2019) Rank-one convexification for sparse regression.
  arXiv preprint arXiv:190110334

\bibitem[{Barvinok(1995)}]{Barvinok95}
Barvinok AI (1995) Problems of distance geometry and convex properties of
  quadratic maps. Discrete \& Computational Geometry 13(2):189--202

\bibitem[{Borwein and Wolkowicz(1981)}]{borwein1981regularizing}
Borwein J, Wolkowicz H (1981) Regularizing the abstract convex program. Journal
  of Mathematical Analysis and Applications 83(2):495--530

\bibitem[{Boumal et~al.(2016)Boumal, Voroninski, and Bandeira}]{boumal2016non}
Boumal N, Voroninski V, Bandeira A (2016) The non-convex {B}urer-{M}onteiro
  approach works on smooth semidefinite programs. Advances in Neural
  Information Processing Systems 29

\bibitem[{Burer and Monteiro(2003)}]{burer2003nonlinear}
Burer S, Monteiro RD (2003) A nonlinear programming algorithm for solving
  semidefinite programs via low-rank factorization. Mathematical Programming
  95(2):329--357

\bibitem[{d'Aspremont et~al.(2007)d'Aspremont, El~Ghaoui, Jordan, and
  Lanckriet}]{d2004direct}
d'Aspremont A, El~Ghaoui L, Jordan MI, Lanckriet GR (2007) A direct formulation
  for sparse {PCA} using semidefinite programming. SIAM Review 49(3):434--448

\bibitem[{De~Klerk(2010)}]{de2010exploiting}
De~Klerk E (2010) Exploiting special structure in semidefinite programming: A
  survey of theory and applications. European Journal of Operational Research
  201(1):1--10

\bibitem[{Drusvyatskiy et~al.(2017)Drusvyatskiy, Wolkowicz
  et~al.}]{drusvyatskiy2017many}
Drusvyatskiy D, Wolkowicz H, et~al. (2017) The many faces of degeneracy in
  conic optimization. Foundations and Trends{\textregistered} in Optimization
  3(2):77--170

\bibitem[{Erdo{g}du et~al.(2021)Erdo{g}du, Ozda{g}lar, Parrilo, and
  Vanli}]{erdogdu2021convergence}
Erdo{g}du MA, Ozda{g}lar A, Parrilo PA, Vanli ND (2021) Convergence rate of
  block-coordinate maximization {B}urer-{M}onteiro method for solving large
  {SDP}s. Mathematical Programming pp 1--39

\bibitem[{Fattahi and Sojoudi(2018)}]{fattahi2018data}
Fattahi S, Sojoudi S (2018) Data-driven sparse system identification. In: 56th
  Annual Allerton Conference on Communication, Control, and Computing
  (Allerton), IEEE

\bibitem[{Fattahi et~al.(2018)Fattahi, Fazelnia, Lavaei, and
  Arcak}]{5_fattahi2018transformation}
Fattahi S, Fazelnia G, Lavaei J, Arcak M (2018) Transformation of optimal
  centralized controllers into near-globally optimal static distributed
  controllers. IEEE Transactions on Automatic Control 64(1):66--80

\bibitem[{Fazlyab et~al.(2020)Fazlyab, Morari, and Pappas}]{fazlyab2020safety}
Fazlyab M, Morari M, Pappas GJ (2020) Safety verification and robustness
  analysis of neural networks via quadratic constraints and semidefinite
  programming. IEEE Transactions on Automatic Control

\bibitem[{Frank and Wolfe(1956)}]{frank1956algorithm}
Frank M, Wolfe P (1956) An algorithm for quadratic programming. Naval research
  logistics quarterly 3(1-2):95--110

\bibitem[{Fujisawa et~al.(2009)Fujisawa, Kim, Kojima, Okamoto, and
  Yamashita}]{fujisawa2009user}
Fujisawa K, Kim S, Kojima M, Okamoto Y, Yamashita M (2009) User’s manual for
  {SparseCoLO}: Conversion methods for sparse conic-form linear optimization
  problems. Report B-453, Dept of Math and Comp Sci Japan, Tech Rep pp
  152--8552

\bibitem[{Fukuda et~al.(2001)Fukuda, Kojima, Murota, and
  Nakata}]{fukuda2001exploiting}
Fukuda M, Kojima M, Murota K, Nakata K (2001) Exploiting sparsity in
  semidefinite programming via matrix completion {I}: General framework. SIAM
  Journal on Optimization 11(3):647--674

\bibitem[{Furini et~al.(2019)Furini, Traversi, Belotti, Frangioni, Gleixner,
  Gould, Liberti, Lodi, Misener, Mittelmann, Sahinidis, Vigerske, and
  Wiegele}]{FuriniEtAl2017TR}
Furini F, Traversi E, Belotti P, Frangioni A, Gleixner A, Gould N, Liberti L,
  Lodi A, Misener R, Mittelmann H, Sahinidis N, Vigerske S, Wiegele A (2019)
  {QPLIB}: {A} library of quadratic programming instances. Mathematical
  Programming Computations 11:237--310, \urlprefix\url{http://qplib.zib.de/}

\bibitem[{Gatermann and Parrilo(2004)}]{gatermann2004symmetry}
Gatermann K, Parrilo PA (2004) Symmetry groups, semidefinite programs, and sums
  of squares. Journal of Pure and Applied Algebra 192(1-3):95--128

\bibitem[{Grone et~al.(1984)Grone, Johnson, S{\'a}, and
  Wolkowicz}]{grone1984positive}
Grone R, Johnson CR, S{\'a} EM, Wolkowicz H (1984) Positive definite
  completions of partial {H}ermitian matrices. Linear algebra and its
  applications 58:109--124

\bibitem[{Hazan(2008)}]{hazan2008sparse}
Hazan E (2008) Sparse approximate solutions to semidefinite programs. In: Latin
  American symposium on theoretical informatics, Springer, pp 306--316

\bibitem[{Journ{\'e}e et~al.(2010)Journ{\'e}e, Bach, Absil, and
  Sepulchre}]{journee2010low}
Journ{\'e}e M, Bach F, Absil PA, Sepulchre R (2010) Low-rank optimization on
  the cone of positive semidefinite matrices. SIAM Journal on Optimization
  20(5):2327--2351

\bibitem[{Kheirandishfard et~al.(2018)Kheirandishfard, Zohrizadeh, Adil, and
  Madani}]{4_kheirandishfard2018convex}
Kheirandishfard M, Zohrizadeh F, Adil M, Madani R (2018) Convex relaxation of
  bilinear matrix inequalities part ii: Applications to optimal control
  synthesis. In: 2018 IEEE Conference on Decision and Control (CDC), IEEE, pp
  75--82

\bibitem[{Kim et~al.(2011)Kim, Kojima, Mevissen, and
  Yamashita}]{kim2011exploiting}
Kim S, Kojima M, Mevissen M, Yamashita M (2011) Exploiting sparsity in linear
  and nonlinear matrix inequalities via positive semidefinite matrix
  completion. Mathematical programming 129(1):33--68

\bibitem[{Kulis et~al.(2007)Kulis, Surendran, and Platt}]{kulis2007fast}
Kulis B, Surendran AC, Platt JC (2007) Fast low-rank semidefinite programming
  for embedding and clustering. In: Artificial Intelligence and Statistics,
  PMLR, pp 235--242

\bibitem[{Kungurtsev and Marecek(2020)}]{kungurtsev2020two}
Kungurtsev V, Marecek J (2020) A two-step pre-processing for semidefinite
  programming. In: 2020 59th IEEE Conference on Decision and Control (CDC),
  IEEE, pp 384--389

\bibitem[{Lasserre et~al.(2017)Lasserre, Toh, and Yang}]{lasserre2017bounded}
Lasserre J, Toh KC, Yang S (2017) A bounded degree {SOS} hierarchy for
  polynomial optimization. EURO Journal on Computational Optimization
  5(1-2):87--117

\bibitem[{Lemon et~al.(2016)Lemon, So, and Ye}]{lemon2016low}
Lemon A, So AMC, Ye Y (2016) Low-rank semidefinite programming: Theory and
  applications. Foundations and Trends{\textregistered} in Optimization
  2(1-2):1--156

\bibitem[{Liu and Nocedal(1989)}]{liu1989limited}
Liu DC, Nocedal J (1989) On the limited memory {BFGS} method for large scale
  optimization. Mathematical programming 45(1):503--528

\bibitem[{Madani et~al.(2022)Madani, Ashraphijuo, Kheirandishfard, and
  Atamt\"{u}rk}]{parabolic_part1}
Madani R, Ashraphijuo M, Kheirandishfard M, Atamt\"{u}rk A (2022) Parabolic
  relaxation for quadratically-constrained quadratic programming -- part {I}.
  Submitted to Mathematical Programming

\bibitem[{Majumdar et~al.(2020)Majumdar, Hall, and Ahmadi}]{majumdar2019recent}
Majumdar A, Hall G, Ahmadi AA (2020) Recent scalability improvements for
  semidefinite programming with applications in machine learning, control, and
  robotics. Annual Review of Control, Robotics, and Autonomous Systems
  3:331--360

\bibitem[{Mason and Papachristodoulou(2014)}]{mason2014chordal}
Mason RP, Papachristodoulou A (2014) Chordal sparsity, decomposing {SDPs} and
  the {Lyapunov} equation. In: 2014 American control conference, IEEE, pp
  531--537

\bibitem[{MOSEK(2017)}]{mosek}
MOSEK A (2017) The {MOSEK} optimization toolbox for MATLAB manual. Version 8.1.
  \urlprefix\url{http://docs.mosek.com/8.1/toolbox/index.html}

\bibitem[{O’donoghue et~al.(2016)O’donoghue, Chu, Parikh, and
  Boyd}]{o2016conic}
O’donoghue B, Chu E, Parikh N, Boyd S (2016) Conic optimization via operator
  splitting and homogeneous self-dual embedding. Journal of Optimization Theory
  and Applications 169(3):1042--1068

\bibitem[{Pataki(1998)}]{Pataki98}
Pataki G (1998) On the rank of extreme matrices in semidefinite programs and
  the multiplicity of optimal eigenvalues. Mathematics of Operations Research
  23(2):339--358

\bibitem[{Pataki(2013)}]{pataki2013strong}
Pataki G (2013) Strong duality in conic linear programming: facial reduction
  and extended duals. In: Computational and Analytical Mathematics, Springer,
  pp 613--634

\bibitem[{Pereira et~al.(2010)Pereira, Ibrahimi, and
  Montanari}]{7_pereira2010learning}
Pereira J, Ibrahimi M, Montanari A (2010) Learning networks of stochastic
  differential equations. In: Advances in Neural Information Processing
  Systems, pp 172--180

\bibitem[{Permenter(2017)}]{permenter2017reduction}
Permenter FN (2017) Reduction methods in semidefinite and conic optimization.
  PhD thesis, Massachusetts Institute of Technology

\bibitem[{Qin et~al.(2018)Qin, Dvijotham, O'Donoghue, Bunel, Stanforth, Gowal,
  Uesato, Swirszcz, and Kohli}]{qin2018verification}
Qin C, Dvijotham KD, O'Donoghue B, Bunel R, Stanforth R, Gowal S, Uesato J,
  Swirszcz G, Kohli P (2018) Verification of non-linear specifications for
  neural networks. In: International Conference on Learning Representations

\bibitem[{Raghunathan et~al.(2018)Raghunathan, Steinhardt, and
  Liang}]{raghunathan2018semidefinite}
Raghunathan A, Steinhardt J, Liang PS (2018) Semidefinite relaxations for
  certifying robustness to adversarial examples. Advances in Neural Information
  Processing Systems 31

\bibitem[{Recht et~al.(2010)Recht, Fazel, and Parrilo}]{Recht07}
Recht B, Fazel M, Parrilo PA (2010) Guaranteed minimum-rank solutions of linear
  matrix equations via nuclear norm minimization. SIAM review 52(3):471--501

\bibitem[{Rotkowitz and Lall(2005)}]{1_rotkowitz2005characterization}
Rotkowitz M, Lall S (2005) A characterization of convex problems in
  decentralized control. IEEE transactions on Automatic Control
  50(12):1984--1996

\bibitem[{Sarkar and Rakhlin(2018)}]{6_sarkar2018fast}
Sarkar T, Rakhlin A (2018) How fast can linear dynamical systems be learned?
  arXiv preprint arXiv:181201251

\bibitem[{Sun and Vandenberghe(2015)}]{sun2015decomposition}
Sun Y, Vandenberghe L (2015) Decomposition methods for sparse matrix nearness
  problems. SIAM Journal on Matrix Analysis and Applications 36(4):1691--1717

\bibitem[{Vallentin(2009)}]{vallentin2009symmetry}
Vallentin F (2009) Symmetry in semidefinite programs. Linear Algebra and its
  Applications 430(1):360--369

\bibitem[{Wang et~al.(2018)Wang, Matni, and Doyle}]{3_wang2018separable}
Wang YS, Matni N, Doyle JC (2018) Separable and localized system-level
  synthesis for large-scale systems. IEEE Transactions on Automatic Control
  63(12):4234--4249

\bibitem[{Wang et~al.(2019)Wang, Matni, and Doyle}]{2_wang2019system}
Wang YS, Matni N, Doyle JC (2019) A system level approach to controller
  synthesis. IEEE Transactions on Automatic Control

\bibitem[{Weisser et~al.(2018)Weisser, Lasserre, and Toh}]{weisser2018sparse}
Weisser T, Lasserre JB, Toh KC (2018) Sparse-{BSOS}: a bounded degree {SOS}
  hierarchy for large scale polynomial optimization with sparsity. Mathematical
  Programming Computation 10(1):1--32

\bibitem[{Yadav et~al.(2020)Yadav, Altun, Madani, and Davoudi}]{MEC}
Yadav A, Altun T, Madani R, Davoudi A (2020) Macromodeling of electric machines
  from ab initio models. IEEE Transactions on Energy Conversion

\bibitem[{Yang et~al.(2015)Yang, Sun, and Toh}]{yang2015sdpnal}
Yang L, Sun D, Toh KC (2015) {SDPNAL}+: a majorized semismooth {N}ewton-{CG}
  augmented lagrangian method for semidefinite programming with nonnegative
  constraints. Mathematical Programming Computation 7(3):331--366

\bibitem[{Yurtsever et~al.(2017)Yurtsever, Udell, Tropp, and
  Cevher}]{yurtsever2017sketchy}
Yurtsever A, Udell M, Tropp J, Cevher V (2017) Sketchy decisions: Convex
  low-rank matrix optimization with optimal storage. In: Artificial
  intelligence and statistics, PMLR, pp 1188--1196

\bibitem[{Zhao et~al.(2010)Zhao, Sun, and Toh}]{zhao2010newton}
Zhao XY, Sun D, Toh KC (2010) A {N}ewton-{CG} augmented {L}agrangian method for
  semidefinite programming. SIAM Journal on Optimization 20(4):1737--1765

\bibitem[{Zheng et~al.(2016)Zheng, Mason, and
  Papachristodoulou}]{zheng2016chordal}
Zheng Y, Mason RP, Papachristodoulou A (2016) A chordal decomposition approach
  to scalable design of structured feedback gains over directed graphs. In:
  2016 IEEE 55th conference on decision and control (CDC), IEEE, pp 6909--6914

\bibitem[{Zheng et~al.(2017)Zheng, Mason, and
  Papachristodoulou}]{zheng2017scalable}
Zheng Y, Mason RP, Papachristodoulou A (2017) Scalable design of structured
  controllers using chordal decomposition. IEEE Transactions on Automatic
  Control 63(3):752--767

\end{thebibliography}

\end{document}